\def\lstAZ{A, B, C, D, E, F, G, H, I, J, K, L, M, N, O, P, Q, R, S, T, U, V, W, X, Y, Z}
\def\lstaz{a, b, c, d, e, f, g, h, i, j, k, l, m, n, o, p, q, r, s, t, u, v, w, x, y, z}
\def\lstAZBB{B, C, D, E, F, G, H, I, J, K, L, M, N, O, P, Q, R, T, U, V, W, X, Y, Z}
\newcommand{\MkScr}[1]{\expandafter\def\csname s#1\endcsname{\mathscr{#1}}}
\newcommand{\MkUp}[1]{\expandafter\def\csname u#1\endcsname{\mathrm{#1}}}
\newcommand{\MkFrak}[1]{\expandafter\def\csname f#1\endcsname{\mathfrak{#1}}}
\newcommand{\MkCal}[1]{\expandafter\def\csname c#1\endcsname{\mathcal{#1}}}
\newcommand{\MkBB}[1]{\expandafter\def\csname #1#1\endcsname{\mathbb{#1}}}
\lstAZ\do{%
	\expandafter\MkScr \@tempa   %
	\expandafter\MkFrak \@tempa   %
	\expandafter\MkUp \@tempa  %
	\expandafter\MkCal \@tempa   %
		  }    
\lstaz\do{%
	\expandafter\MkUp \@tempa    }    
\lstAZBB\do{%
	\expandafter\MkBB \@tempa      }
\DeclareMathOperator*{\argmin}{argmin}
\renewcommand{\epsilon}{\varepsilon}
\renewcommand{\tilde}{\widetilde}
\numberwithin{equation}{section}
\newtheorem{theorem}{Theorem}[section]
\newtheorem{lemma}[theorem]{Lemma}
\newtheorem{proposition}[theorem]{Proposition}
\newtheorem*{theorem*}{Theorem}
\newtheorem*{lemma*}{Lemma}
\newtheorem*{proposition*}{Proposition}
\newtheorem*{corollary*}{Corollary}
\theoremstyle{definition}
\newtheorem{definition}[theorem]{Definition}
\newtheorem{remark}[theorem]{Remark}
\newtheorem{assumption}{Assumption}
\newtheorem{example}[theorem]{Example}
\crefname{proposition}{Proposition}{Propositions}
\crefname{lemma}{Lemma}{Lemmata}
\crefname{theorem}{Theorem}{Theorems}
\crefname{assumption}{Assumption}{Assumptions}
\crefname{section}{Section}{Sections}
\crefname{algorithm}{Algorithm}{Algorithms}
\crefname{remark}{Remark}{Remarks}
\crefname{appendix}{Appendix}{Appendices}
\crefname{figure}{Figure}{Figures}
\newcommand{\R}{\mathbb R}
\def\DHLhksqrt#1#2{%
\setbox0=\hbox{$#1\sqrt{#2\,}$}\dimen0=\ht0
\advance\dimen0-0.2\ht0
\setbox2=\hbox{\vrule height\ht0 depth -\dimen0}%
{\box0\lower0.4pt\box2}}
\let\oldpropto\propto
\renewcommand{\propto}{\ensuremath{\; \oldpropto \;}}
\title{
    \bf Controlled stochastic processes for simulated annealing
}
\author{%
    \bf \small Vincent Molin\thanks{Contact: \url{molinv@chalmers.se}} 
    \hspace{10pt} Axel Ringh
    \hspace{10pt} Moritz Schauer
    \hspace{10pt} Akash Sharma \\
    \small Department of Mathematical Sciences \\ 
    \small Chalmers University of Technology and the University of Gothenburg
}
\date{\small April 11, 2025}
\renewcommand{\cW}{\mathcal{W}}
\renewcommand{\cA}{\mathcal{A}}
\renewcommand{\sP}{\mathscr{P}}
\renewcommand{\RR}{\mathbb{R}}
\renewcommand{\EE}{\mathbb{E}}
\newcommand{\cov}{\textnormal{Cov}}
\newcommand{\var}{\textnormal{Var}}
\newcommand{\covt}{\cov_{\mu_t}}
\newcommand{\vart}{\var_{\mu_t}}
\newcommand{\vdot}[2]{\left\langle #1, #2 \right\rangle}
\newcommand{\supp}{\textrm{supp}}
\newcommand{\difletter}{d} 
\newcommand{\difempty}{\mathop{}\!}   %
\newcommand{\dif}[1]{\mathop{}\!\difletter#1}
\newcommand{\obj}{U}    %
\newcommand{\dens}{\pi}   %
\newcommand{\gen}{\cA}  %
\newcommand{\ddim}{d}  %
\newcommand{\Rdim}{{\RR^\ddim}} %
\newcommand{\Psp}{\sP} %
\newcommand{\poinc}{C_P} %
\newcommand{\wass}{\cW} %
\newcommand{\leb}{\dif{x}}
\newcommand{\dx}{\dif{x}}
\newcommand{\dy}{\dif{y}}
\newcommand{\dt}{\dif{t}}
\newcommand{\modgen}{\gen^o}
\newcommand{\bpsvel}{Y}
\newcommand{\Tmap}[2]{{T_{#1\to#2}}}
\newcommand{\Ttth}{\Tmap{\mu_t}{\mu_{t+h}}}
\newcommand{\Id}{\textrm{Id}}
\begin{document}

\maketitle

\begin{abstract}
Simulated annealing solves global optimization problems by means of a random walk in a cooling energy landscape based on the objective function and a temperature parameter. However, if the temperature is decreased too quickly, this procedure often gets stuck in suboptimal local minima.
In this work, we consider the cooling landscape as a curve of probability measures. We prove the existence of a minimal norm velocity field which solves the continuity equation, a differential equation that governs the evolution of the aforementioned curve. The solution is the weak gradient of an integrable function, which is in line with the interpretation of the velocity field as a derivative of optimal transport maps.
We show that controlling stochastic annealing processes by superimposing this velocity field would allow them to follow arbitrarily fast cooling schedules. Here we consider annealing processes based on diffusions and piecewise deterministic Markov processes. 
Based on convergent optimal transport-based approximations to this control, we design a novel interacting particle--based optimization method that accelerates annealing. We validate this accelerating behaviour in numerical experiments. 
\end{abstract}

\renewcommand{\cfttoctitlefont}{\large\bfseries}
\setlength\cftbeforesecskip{4pt}
\setcounter{tocdepth}{1}

\tableofcontents

\section{Introduction}

Stochastic exploration methods solve optimization problems by changing the argument of the optimization objective $\obj(x)$ randomly, for example by making a random walk in the energy landscape induced by the objective function. The magnitude of these random fluctuations can be cast as a temperature, which corresponds to different scalings of the energy landscape.
Simulated annealing \cite{kirkpatrick1983optimization, van1987simulated, dekkers1991global} is an important example of this. In its continuous-time diffusion-based version, introduced in \cite{geman1986diffusions, chiang1987diffusion}, the temperature $\beta(t)^{-1}$ is decreased over time to trap the random walker in an optimum. 
At a time $t$, the random walkers momentarily follow diffusive dynamics which, if kept unchanged, would mix towards the stationary measure $\mu_t$ with density function $ \exp(-\beta(t)\obj(x))$ up to proportionality, under suitable conditions on $\obj$.
If the temperature is decreased slowly enough, the random walker finds and becomes trapped in a \emph{global} optimum as time goes to infinity \cite{chiang1987diffusion}. 
If the temperature is decreased too quickly, the random walkers are not able to keep up with the changes in the energy landscape and get frozen in local minima.

In this work, we prove the existence of a vector field $v_t(x)$ which captures the changes in the cooling landscape, in the sense that it describes how particles in this landscape, for example, random walkers, have to be pushed to get arbitrarily close to a global optimum in bounded time. This vector field achieves this with minimal effort and arises as derivative \( v_t(x) = \lim_{h\to 0} h^{-1} \left(T_{\mu_t \to \mu_{t+h}}(x) - x\right)\) where $T_{ \mu_{t} \to  \mu_{t+h}}$ is the Monge optimal transport map (in $\|\cdot\|_2^2$-sense) transporting probability mass distributed as $ \mu_{t}$ to probability mass distributed as $ \mu_{t+h}$.
Then, as an application, we design and implement a novel interacting particle system--based optimization method that accelerates annealing using the vector field $v_t(x)$ approximated by discrete optimal transport.

So let $\obj\colon \Rdim \to \R$ be an objective function of interest and consider the optimization problem
\[
\min_{x\in \Rdim} \; \obj(x).
\]
Although many algorithms have been developed to solve these types of problems, see, e.g., one of the monographs \cite{bertsekas1999nonlinear, nocedal2006numerical}, it is still difficult to find globally optimal solutions if $\obj$ is non-convex.
One approach to such problems is named after the metallurgic technique called annealing, where a metal is heated and then slowly cooled to alter its physical properties by finding a more stable configuration.
The classical continuous-time annealing process for optimization in $\Rdim$, studied in, e.g., \cite{chiang1987diffusion, geman1986diffusions}, is given by the stochastic differential equation
\begin{equation}\label{eq:langevin}
    \dif X_t = -\nabla\obj(X_t) \dt + \sqrt{2\beta^{-1}(t)}\dif B_t,
\end{equation}
where $B_t$ is a $\ddim$-dimensional Brownian motion. 
Under fairly general conditions, this process is guaranteed to find a global minimum if the cooling schedule $\beta$ has at most logarithmic growth \cite{chiang1987diffusion, holley1989asymptotics}.%

In this work, we will consider the sequence of cooling measures as a curve in a suitable probability space. To this end,
for a fixed inverse temperature $\beta \in \RR_+$ we define the Gibbs measure corresponding to $\obj$ as
\[
    \mu_\beta(\dx) := Z_\beta^{-1}e^{-\beta\obj(x)}\dx,
\]
where $Z_\beta := \int e^{-\beta\obj(x)} \dx$ is the normalizing constant. For sufficiently regular functions $\obj$, the Gibbs measure $\mu_\beta$ concentrates on the set of global minima of $\obj$ as $\beta \to \infty$, in the sense that $\mu_\beta$ converges weakly to a limiting measure $\mu_\infty$ supported on the set of global minima of $\obj$ \cite{hwang1980laplace, hasenpflug2024wasserstein}.
Next, let $\beta(t)$ be a smooth function $\RR_+\to [1,\infty)$ and, by a slight abuse of notation, define the measure
\begin{equation}\label{eq:mu_t}
    \mu_t(\dx) := \mu_{\beta(t)}(\dx) = Z_{\beta(t)}^{-1}e^{-\beta(t)\obj(x)} \dx.
\end{equation}
We will refer to the map $t\mapsto\mu_t$ as the Gibbs curve.

For a given Markov process with time-dependent generator $\gen_t$ (in the sense of Stroock--Varadhan \cite[Chapter~6]{stroock1997multidimensional}), the Kolmogorov forward equation that governs the evolution of the law is given by 
\begin{equation}\label{eq:kolmogorov_forward}
    \partial_t\rho_t = \gen_t^*\rho_t.
\end{equation}
When $\gen_t$ is the generator of the diffusion process in \eqref{eq:langevin}, this corresponds to the PDE
\begin{equation}\label{eq:fokker_planck}
    \partial_t\rho_t = \nabla\cdot(\rho_t\nabla\obj) + \frac{1}{\beta(t)}\Delta\rho_t,
\end{equation}
where $\nabla\cdot$ denotes the divergence operator and $\partial_t\rho_t = \frac{\partial}{\partial t}\rho_t$. Further, and crucial to our point of view is now that, for this generator, it holds that
\[
    \cA^*_t\mu_t = 0 \ne \partial_t\mu_t, \quad \text{if $\beta'\ne 0$},
\]
and hence the Gibbs curve,
$t\mapsto \mu_t$, can not be a solution to Equation \eqref{eq:kolmogorov_forward}. 
Clearly, this holds for any time-inhomogeneous Markov process satisfying $\gen_t^* \mu_t = 0$, which is normally the type of dynamics constructed in the simulated annealing literature, see, for instance, \cite{geman1986diffusions, chiang1987diffusion,holley1988simulated, monmarche2016piecewise}.
A question, and the topic under investigation in this paper, is then, can we find a modified $\modgen_t$ such that  
\begin{equation}\label{eq:fp_question_intro}
    (\modgen_t)^* \mu_t = \partial_t\mu_t?
\end{equation}
This would then yield that the law of the process undergoes a deformation equivalent to that of the energy landscape, alleviating the need for logarithmic cooling over infinite time. Specifically, we are going to look for deterministic velocity fields $v_t$ such that
\begin{equation}\label{eq:ce_intro}
    \partial_t\mu_t + \nabla\cdot(v_t\mu_t) = 0,
\end{equation}
with the end goal of superimposing these dynamics on the simulated annealing process.
Viewing the Gibbs curve as a curve in an appropriate metric space of probability measures, one choice of $v_t$ arises as limits of optimal transport problems, see, e.g., \cite[Chapter~8]{ambrosio2008gradientflows}.
We first ensure that the theoretical underpinnings guaranteeing the existence of this velocity are satisfied.
We then take inspiration from this and design an interacting particle system algorithm, where we incorporate solving discrete optimal transport problems along the way.

\subsection{Related work}\label{sec:related_work}

It is possible to understand the simulated annealing process \eqref{eq:langevin} in a number of other ways. For example, the Jordan--Kinderlehrer--Otto formalism identifies the Langevin dynamics in \eqref{eq:langevin} as the Wasserstein-2 gradient flow of the functional 
\begin{equation}\label{eq:jko}
    F_\beta(\mu) = \int U \dif \mu + \frac{1}{\beta} \int \log\left(\frac{\dif \mu}{\dif \lambda}\right) \dif \mu,
\end{equation}
with $\lambda$ the Lebesgue measure on $\Rdim$, see, e.g., \cite{jordan1998variational,ambrosio2008gradientflows}. Hence, we can view the annealing process as an inhomogeneous gradient flow of the functionals $F_{\beta(t)}$. In general, this gradient flow
converges at a speed that can be exponentially slow in $\beta$, see Section \ref{sec:finite_end}, which is related to the fact that $\beta(t)$ can grow at most logarithmically fast in order to guarantee convergence \cite{chiang1987diffusion, monmarche2016piecewise}.

Different perspectives on simulated annealing has lead to different ways of constructing new optimization methods. For instance, from the gradient flow formulation described above, \cite{bolte2024swarm} consider a new family of methods based on modifying the functional in \eqref{eq:jko}. Orthogonally, \cite{monmarche2016piecewise} instead consider replacing the diffusive dynamics in \eqref{eq:langevin} with piecewise deterministic Markov processes.

From a more algorithmic perspective, the method we design bears a resemblance to other particle-based optimization approaches, such as consensus-based optimization \cite{carrillo2018analytical, tretyakov2023consensus} and sequential Monte Carlo simulated annealing \cite{zhou2013sequential}. More precisely, consensus-based optimization orchestrates interaction among particles where each particle drifts towards a weighted average, where higher weights are assigned to particles if they currently are, relative to the other particles, in positions corresponding to better objective values. In contrast, we assign to each particle an individual target by first solving a population-wide discrete optimal transport problem. This introduces an inhomogeneous between the particles. For ease of presentation, we defer further elaboration on this to \cref{rem:cbo}.
Finally, while we will look for a velocity field that follows the Gibbs curve by continuously moving the particles, it is possible to understand sequential Monte Carlo simulated annealing \cite{zhou2013sequential} as achieving this curve-following by successively removing particles from low probability regions and resampling them in high probability regions. We elaborate on this in \cref{rem:smcsa}. 

In the context of Bayesian inference, a similar velocity field formulation can be found in, e.g, \cite{heng2021gibbs}. While we approximate the velocity field using a particle system and discrete optimal transport, in \cite{heng2021gibbs} the authors instead consider computing an inexact but numerically more tractable velocity field. Approximate optimal transport maps have also been investigated for use in Bayesian inference, see for instance \cite{parno2018transport}.

    As a final observation here, we note that our view on simulated annealing is reminiscent of
    interior points methods in optimization, following a barrier trajectory \cite{forsgren2002interior},  \cite[Chapter~14]{nocedal2006numerical}, \cite[Chapter~11]{boyd2004convex}, \cite[Part 3]{vanderbei2008linear}. To this end, note that we can identify the finite-dimensional, potentially non-convex, optimization problem as an infinite-dimensional linear optimization problem over the set of probability measures.
    For a continuously differentiable function $U$ with a compact set of global minimizers (see Assumption~\ref{ass:inf_obj_exists}),
    we have that
    \[
        \min_{x \in \Rdim} U(x) = \min_{\rho \in \Psp} \int U(x) \difempty\rho(\dx),
    \]
    where $\Psp$ denotes the set of probability measures on $\Rdim$. The minimum at the right-hand side is attained in, e.g., $\delta_{x^*}(\dx)$, where $\delta_x(\dx)$ denotes the Dirac measure at $x$ and where $x^*$ is any
    global minimizer of $U$.
    Now, let $\Pi$ be the set of measures with strictly positive Lebesgue probability densities on $\RR^d$, that is
    \[
    \Pi := \{ \rho \in \Psp(\Rdim) \colon \rho(\dx) = \pi(x)\dx, \; \pi(x) > 0 \, \forall x \in \Rdim\}.
    \]
    Note that any minimizer of the form $\delta_{x^*}(\dx)$ is in the (weak) closure of $\Pi$, and therefore, with a slight abuse of notation, we have that
    \[
    \min_{\rho \in \Psp} \int U(x) \difempty\rho(\dx) = \inf_{\rho \in \Pi} \int U(x) \difempty\rho(\dx) = \inf_{\pi \in \Pi} \int U(x) \pi(x) \dx
    \]
    Indeed, by introducing a regularization term parametrized by $\beta$ we can consider the problem
    \[
        \min_{\pi \in \Pi} \int U(x)\pi(x) \dx  + \frac{1}{\beta} \int \pi(x) \log\pi(x) \dx,
    \]
    which has the solution $ \pi_\beta \propto \exp(-\beta U) \in \Pi$. While $\Pi$ is not the interior of $\Psp$, following the Gibbs curve $t \mapsto \pi_t$ can still be seen as an analogue to following the barrier trajectory.

\subsection{Outline}

The outline of the paper is as follows: in \cref{sec:background} we introduce notation and give a brief outline of relevant results from the literature, in particular the metric side of the spaces of probability measures and curves in such spaces. In \cref{sec:velocity_field}, we show that the Gibbs curve $t \mapsto \mu_t$ is absolutely continuous, by establishing the existence of a velocity field $v_t$ which solves \eqref{eq:ce_intro}. Using this velocity field, we then proceed in \cref{sec:controlled_superposition} to show that there exists both a diffusion process and a piecewise deterministic Markov process that at any time $t$ have a law agreeing with $\mu_t$, hence giving a definite answer to \eqref{eq:fp_question_intro}. In \cref{sec:num_approx}, we leverage these results to design interacting particle--based optimization algorithms. To do so,
we show a fairly general convergence result for transport map estimation relying on self-normalized reweighting, and then estimate the velocity field by solving discrete optimal transport problems. Finally, in \cref{sec:numerical_experiments} we illustrate and evaluate our methods on a number of test problems.
For improved readability, the paper also contains \cref{app:proofs} to which we defer some of our proofs.

\section{Background} \label{sec:background}

In this section we introduce background material needed for the rest of the paper. For ease of reference, we first set up the notation used throughout. 

\subsection{Notation}
For density functions $f,g$, \( f \propto g \) means that there exists a constant $a > 0$ such that \( f = a g. \)
We use $\leb$ as shorthand notation for the standard Lebesgue measure on $\Rdim$, and read $\mu(\dx) = \pi(x) \dx$ as the measure $\mu$ with Radon--Nikodym derivative, or density, $\pi$ with respect to the Lebesgue measure $\leb$. For measures $\mu,\nu$ then $\mu \ll \nu$ if and only if $\mu$ is dominated by $\nu$. We denote by $|\cdot|$ the Euclidean norm, by $\langle\cdot,\cdot\rangle$ the Euclidean inner product, and by $\|\cdot\|_{\text{Fr}}$ the Frobenius norm of a matrix.

In the following, we need a number of different spaces. By $C(E,Y)$ we denote the set of $Y$-valued continuous functions on $E$. When $Y = \RR$ we write $C(E)$, and by $C^k(E)$ we denote the set of $\RR$-valued $k$-times continuously differentiable functions on $E$. $C^k_c(E)$ is the set of compactly supported $f \in C^k(E)$.
By $\Psp(\Rdim)$ we denote the probability measures on $\Rdim$, and for $\mu\in \Psp(\Rdim)$,
we write with a slight abuse of notation $L^p(\mu) = L^p(\mu; \Rdim, \RR^\ell)$ for
the set of functions $f \colon \Rdim \to \RR^\ell$ such that $\| f \|_{L^p(\mu)} := \int_{\Rdim} | f |^p \dif \mu < \infty$, with $\ell = 1$ or $\ddim$ depending on the context.
$\Psp_p(E)$ is the set of probability measures on the metric space $(E,d)$ with bounded $p$-th moments, i.e., $\mu \in \Psp_p(E)$ if and only if there exists a point $x_0 \in E$ such that $\int d(x,x_0)^p \mu(\dx) < \infty$. For a measure $\mu$ or a function $f$, we denote by $\supp(\mu)$, $\supp(f)$ the support of $\mu$ and $f$, respectively. 

We define the ($L^2$-)Poincar\'e constant $\poinc$ of $\mu \in \sP(\Rdim)$ as
\[
    \poinc(\mu) := \inf \,\, \left\{C \geq 0 \colon  \|f\|_{L^2(\mu)} \leq C  \| \nabla f\|_{L^2(\mu)}, \quad \forall f\in C^\infty_c(\Rdim)  \right\}.
\]
A Borel probability measure $\mu \in \sP(\Rdim)$ with a finite Poincar\'e constant, $C_P(\mu) < \infty$, defines a norm on the set of smooth compactly supported functions $C^\infty_c(\Rdim)$ by the pairing
\[
    \langle f ,  g\rangle_\mu  = \int fg \dif \mu + \int \langle \nabla f, \nabla g\rangle \dif \mu.
\]
Denote by \( H^1(\mu) \) the completion of $C^\infty_c(\Rdim)$ with respect to the norm induced by $\langle\cdot,\cdot\rangle_\mu$. By the Meyers-Serrin theorem (see \cite[Chapter~7]{lieb2001analysis}), $H^1(\mu)$ is the space of $L^2$-functions with weak derivatives in $L^2(\mu)$, that is,
\begin{align*}
    H^1(\mu) &= \{ f \in L^2(\mu; \Rdim, \RR) \colon \nabla f \in L^2(\mu; \Rdim, \Rdim ) \}.%
\end{align*}

We say that a sequence of probability measures $(\mu_n)$ converges weakly to a probability measure $\mu$ iff 
\[
    \lim_{n\to\infty} \int f \dif\mu_n \to \int f \dif\mu
\]
for every continuous bounded function $f$. For a probability measure $\mu$ and $f,g\in L^2(\mu)$ we write $\cov_\mu(f,g) := \int (f-\int f \dif\mu)(g-\int g \dif \mu)\dif\mu$ and $\var_\mu(f) := \cov_\mu(f,f)$.

\subsection{Optimal transport and curves in the metric space \texorpdfstring{$(\Psp_p(\Rdim),\wass_p)$}{(P(Rd), Wp)}}

Here we state definitions and results related to the metric side of spaces of probability measures, which are central to our arguments. We follow the comprehensive treatment of \cite{ambrosio2008gradientflows}.
The space of probability measures $\Psp_p(\Rdim)$ for $p \geq 1$ %
becomes a metric space when equipped with the $p$-th Wasserstein distance $\wass_p$, 
defined via the optimal transport problem
\begin{equation}\label{eq:wass_p}
    \wass_p^p(\mu,\nu) := \inf_{%
        \substack{%
            \gamma\in\Psp(\Rdim\times \Rdim) \\
            \int\gamma(\cdot,\dx) = \mu \\
            \int\gamma(\dx,\cdot) = \nu
        }
    }
    \EE_{(X,Y)\sim\gamma}\left[ |X-Y|^p \right]
    .
\end{equation}
The set $\Gamma(\mu,\nu) := \{ \gamma\in\Psp(\Rdim\times \Rdim) \colon \int\gamma(\cdot,\dx) = \mu, \int\gamma(\dx,\cdot) = \nu\}$ is the admissible couplings, or transport plans, between $\mu$ and $\nu$. Intuitively, a joint distribution $\gamma^* \in \Gamma(\mu,\nu)$ achieving the infimum in \eqref{eq:wass_p} describes a minimal cost deformation of the probability mass distributed according to $\mu$ into $\nu$.
Endowing $\Psp_p(\Rdim)$ with $\wass_p$ yields a (complete and separable) metric space \cite[Theorem~6.18]{villani2008optimal}.

If the measure we transport from has a Lebesgue density, then the optimal transport plan $\gamma^*$ takes the form of a deterministic transport map $T$. 
\begin{proposition}[Existence of Monge transport map, {\cite[Theorems~2.44~and~2.50]{villani2003topics}}]\label{prop:existence_monge_map}
    Let $\mu,\nu\in\Psp_p(\Rdim)$. If $\mu$ has a Lebesgue density then there exists a Monge map solving \eqref{eq:wass_p}, that is, a measurable map $T\colon\Rdim\to\Rdim$ such that $\nu=T_\#\mu$ and
    \[
        \wass_p (\mu,\nu) = \left( \int |x-T(x)|^p \difempty\mu(\dx) \right)^{\frac{1}{p}}.
    \]
    This corresponds to the deterministic optimal coupling $\gamma^*$, given by
    \[
        \gamma^* = (\textnormal{Id}, T)_\#\mu.
    \]
    Further, when $p>1$, $T$ is $\mu$-almost everywhere unique and we denote by $\Tmap{\mu}{\nu}$ the optimal transport map from $\mu$ to $\nu$.
\end{proposition}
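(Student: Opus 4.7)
My plan is to follow the classical Brenier--Gangbo--McCann strategy: relax Monge to Kantorovich, exploit duality to obtain a potential, and recover the map from the first-order optimality condition. Throughout I would set $c(x,y) := |x-y|^p$ and write $\Gamma(\mu,\nu)$ for the admissible couplings as in the statement.

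First I would establish existence of an optimal plan $\gamma^* \in \Gamma(\mu,\nu)$ supported on a $c$-cyclically monotone set. Since $c$ is lower semicontinuous and nonnegative, $\Gamma(\mu,\nu)$ is tight (both marginals are fixed) and closed under weak convergence, so existence follows from the direct method of the calculus of variations. Cyclical monotonicity of $\supp(\gamma^*)$ is then standard: any finite cyclic rearrangement strictly lowering the cost would contradict the optimality of $\gamma^*$. Next I would invoke Kantorovich duality to produce a $c$-concave potential $\varphi \colon \Rdim \to \R$ together with its $c$-transform $\varphi^c$ such that $\varphi(x) + \varphi^c(y) \leq c(x,y)$ everywhere, with equality on $\supp(\gamma^*)$.

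To extract the map, the key structural observation is that $c(\cdot,y)$ is locally semiconcave on $\Rdim \setminus \{y\}$, so $\varphi$, being an infimum of locally semiconcave functions, inherits local semiconcavity. By Alexandrov's theorem, $\varphi$ is (twice) differentiable outside a Lebesgue-null set; since $\mu$ has a Lebesgue density, this null set is also $\mu$-null, so $\varphi$ is differentiable $\mu$-almost everywhere. Differentiating the equality $\varphi(x) + \varphi^c(y) = c(x,y)$ at such a point $x$ shows that $y \mapsto c(x,y) - \varphi^c(y)$ attains its minimum at any $y$ with $(x,y) \in \supp(\gamma^*)$, which forces $\nabla\varphi(x) = \nabla_x c(x,y) = p|x-y|^{p-2}(x-y)$. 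For $p>1$ the right-hand side is injective in $y$ for fixed $x$, so this relation pins down $y$ uniquely as a measurable function $T(x)$. Consequently $\gamma^*$ concentrates on the graph of $T$, giving $\gamma^* = (\textrm{Id}, T)_\#\mu$, and the Wasserstein identity follows by integrating the cost against $\gamma^*$. The $\mu$-a.e.\ uniqueness of $T$ follows from the $\mu$-a.e.\ uniqueness of $\nabla \varphi$ together with the injectivity of $\nabla_x c(x,\cdot)$.

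The hard part is the regularity step. Establishing $\mu$-a.e.\ differentiability of $c$-concave potentials is exactly where the Lebesgue-density hypothesis on $\mu$ is essential, since it lets us discard the Lebesgue-null exceptional set produced by Alexandrov. The semiconcavity argument needs a bit of care when $p \neq 2$ because $c(\cdot,y)$ is only locally semiconcave away from the diagonal, but this is handled by a localization together with a truncation argument exploiting the smoothness of $y \mapsto |x-y|^p$ for $x \neq y$. Once $\mu$-a.e.\ differentiability is in hand, the injectivity of $\nabla_x c(x,\cdot)$ and hence the construction and uniqueness of $T$ reduces to a calculus computation.
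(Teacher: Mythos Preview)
The paper does not supply its own proof of this proposition; it is stated as background and attributed directly to Villani's monograph. Your sketch is correct and is precisely the Gangbo--McCann strategy underlying the cited theorems: existence of an optimal plan by tightness and lower semicontinuity, $c$-cyclical monotonicity of its support, Kantorovich duality to produce a $c$-concave potential, almost-everywhere differentiability of that potential (this is where the Lebesgue-density hypothesis on $\mu$ is used), and recovery of the map from the first-order condition via the twist condition on $\nabla_x c(x,\cdot)$.

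One caveat worth flagging: your argument, through the injectivity of $y \mapsto p|x-y|^{p-2}(x-y)$, handles only $p>1$. The existence assertion in the borderline case $p=1$ is genuinely harder and is not covered by the twist mechanism, since $\nabla_x|x-y| = (x-y)/|x-y|$ determines only the direction of $y$ from $x$; it requires the separate transport-ray decomposition (Sudakov, later made rigorous by Ambrosio and by Evans--Gangbo). Your outline therefore proves existence and uniqueness simultaneously for $p>1$ but does not establish existence for $p=1$. Since the paper only invokes this proposition in the regime $1<p<\infty$ (via the characterization of tangent velocity fields as limits of Monge maps), this gap is immaterial for the paper's purposes.
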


\begin{definition}[Absolutely continuous curves and metric derivatives, {\cite[Section~1.1]{ambrosio2008gradientflows}}]
    We call a curve\footnote{A curve is a continuous map of an interval $I \subset \RR$ into a topological space.} $\eta\colon I\to\Psp_p(\Rdim),\, t\mapsto \mu_t$ \emph{absolutely continuous} if the \emph{metric derivative} $|\mu'|(t)$, defined as the limit
    \[
        |\mu'|(t) := \lim_{h\to 0} \frac{\wass_p(\mu_t,\mu_{t+h})}{h},
    \]
    exists for Lebesgue almost every $t$ and belongs to $L^1(I)$.
\end{definition}

We can now make the continuity equation \eqref{eq:ce_intro} formal.

\begin{theorem}[Absolutely continuous curves and velocity fields, {\cite[Theorem~8.3.1]{ambrosio2008gradientflows}}]\label{thm:ac_curves_and_velocity_fields}
    Let $I \mapsto \mu_t$ be an absolutely continuous curve in $(\sP_p(\Rdim), \wass_p)$. Then there exists a time-dependent velocity field $v_t(x)\colon I \times \Rdim \to \Rdim$ satisfying the continuity equation
    \begin{equation}\label{eq:focker_planck_v}
        \partial_t \mu_t + \nabla\cdot(v_t\mu_t) = 0,
    \end{equation}
    in the sense that for all compactly supported smooth functions $f \in C^\infty_c(I \times \Rdim)$
        \begin{equation}\label{eq:weak_vel_form}
            \int_I\int_\Rdim\left(\partial_t f + \langle v_t, \nabla_x f\rangle\right) \dif\mu_t\dt = 0.
        \end{equation}
    Further, it holds that $\|v_t\|_{L^p(\mu_t,\Rdim)} \in L^1(I)$.

    Conversely, assume that there exists a $v_t$ that solves \eqref{eq:focker_planck_v} and such that $\|v_t\|_{L^p(\mu_t,\Rdim)} \in L^1(I)$. Then $t\mapsto\mu_t$ is an absolutely continuous curve.
\end{theorem}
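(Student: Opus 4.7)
The plan is to prove the two directions via the correspondence between the Wasserstein distance on $\Psp_p(\Rdim)$ and vector-valued momentum measures arising from optimal transport, in the style of Ambrosio--Gigli--Savar\'e. The minimal-$L^p$ velocity field should emerge as a limit of finite-difference velocities built from optimal transport plans at vanishing time scales, while conversely a velocity field solving the continuity equation generates admissible couplings that control $\wass_p$.

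For the direct implication, I would partition $I$ at scale $h$ and, on each subinterval $[t_k, t_k+h]$, select an optimal transport plan $\gamma_k \in \Gamma(\mu_{t_k}, \mu_{t_k+h})$. These assemble into a vector-valued measure $E^h$ on $I \times \Rdim$, representing $h^{-1}(y-x)$-weighted mass along the plans, together with a piecewise-in-time interpolation $\mu^h$. A direct computation against test functions shows that $(\mu^h, E^h)$ satisfies the continuity equation up to an error vanishing with $h$. H\"older's inequality applied to the plans gives that the total variation norm of $E^h$ is controlled by $\int_I h^{-1} \wass_p(\mu_t, \mu_{t+h}) \dt$, and by absolute continuity this has a uniform $L^1$ bound converging to $\int_I |\mu'|(t)\dt$. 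A weak-$*$ compactness and disintegration argument extracts a limit $E = v_t \mu_t \otimes \dt$; passing to the limit in the approximate continuity equation yields \eqref{eq:weak_vel_form}, and lower semi-continuity gives $\|v_t\|_{L^p(\mu_t)} \in L^1(I)$.

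For the converse, given $v_t$ solving \eqref{eq:focker_planck_v} weakly with $\|v_t\|_{L^p(\mu_t)} \in L^1(I)$, I would invoke the superposition principle: such a solution can be represented by a probability measure $\eta$ on absolutely continuous curves with $(e_t)_\# \eta = \mu_t$ and $\dot\omega_t = v_t(\omega_t)$ for $\eta$-a.e.\ $\omega$, where $e_t$ is evaluation at time $t$. Then $(e_s, e_t)_\# \eta \in \Gamma(\mu_s, \mu_t)$ is an admissible coupling, and H\"older's inequality gives
\[
    \wass_p^p(\mu_s, \mu_t) \le \int |\omega_t - \omega_s|^p \dif\eta \le (t-s)^{p-1}\int_s^t \|v_r\|_{L^p(\mu_r)}^p \dif r,
\]
so $\wass_p(\mu_s, \mu_t) \le \int_s^t \|v_r\|_{L^p(\mu_r)} \dif r$, which shows absolute continuity.

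The main obstacle is in the forward direction: proving that the limiting momentum measure $E$ is absolutely continuous with respect to $\mu_t \otimes \dt$, so that the velocity density $v_t$ is well-defined. This relies on the strict convexity of $|z|^p$ in the momentum variable --- concentration of mass onto a set of $\mu_t$-measure zero on a time slice would strictly worsen the norm, contradicting the optimality inherited from the transport plans $\gamma_k$. Making this rigorous requires a careful Radon--Nikod\'ym and disintegration argument and is the core technical content; by comparison, justifying the superposition principle needed for the converse is a well-established result of Ambrosio--DiPerna--Lions theory.
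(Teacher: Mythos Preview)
The paper does not contain a proof of this statement: it is quoted as a background result, cited verbatim from \cite[Theorem~8.3.1]{ambrosio2008gradientflows}, and used later as a black box (notably in the proof of \cref{thm:gibbs_ac}). There is therefore nothing in the paper to compare your proposal against.

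For what it is worth, your sketch is a faithful outline of the argument in Ambrosio--Gigli--Savar\'e: the forward direction via finite-difference momentum measures built from optimal plans, weak-$*$ compactness, and lower semicontinuity of the $p$-energy to recover $v_t$ as a density; the converse via a representation by curves and the coupling estimate. One minor point: in the original, the absolute continuity of the limit measure $E$ with respect to $\mu_t\otimes\dt$ is obtained not quite by the ``strict convexity contradicts concentration'' heuristic you describe, but through a Benamou--Brenier-type functional whose finiteness forces $E \ll \mu_t\otimes\dt$; this is the rigorous version of your intuition.
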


\begin{remark}
    Absolute continuity for a curve defined on an interval $[0,T]$ coincides with a finite curve length. More precisely, the length $\ell$ of a curve $t\mapsto\mu_t$ is defined by
    \begin{align}
        \ell(t\mapsto\mu_t) := \int_0^T |\mu'|(t) \dt=\, & \inf_v  \quad\int_0^T \|v_t\|_{L^2(\mu_t)} \dt, \label{eq:flow_formulation}                                     \\
                                 & \text{s.t.} \quad \partial_t\mu_t + \nabla\cdot(v_t\mu_t) = 0, \label{eq:continuity_equation}
    \end{align}
    using the convention that the infimum is infinite if there are no velocity fields $v$ satisfying the continuity equation. 
\end{remark}

For absolutely continuous curves of measures $t \mapsto \mu_t$ with Lebesgue densities, the existence of Monge transport maps in \cref{prop:existence_monge_map} gives another characterization of the velocity fields in Theorem \ref{thm:ac_curves_and_velocity_fields}.
\begin{proposition}[Transport map characterization of $v_t$, {\cite[Proposition~8.4.6]{ambrosio2008gradientflows}}]\label{prop:v_as_monge_limit}
    Let $t\mapsto \mu_t\in(\Psp_p(\Rdim),\wass_p)$ be an absolutely continuous curve, with $1<p<\infty$. If $\mu_t$ has a Lebesgue density for all $t$, then
    \[
        v_t = \lim_{h\to 0} h^{-1}(\Tmap{\mu_t}{\mu_{t+h}} - \textnormal{Id}) \quad \text{ in } L^2(\mu_t),
    \]
    where $\Tmap{\mu_t}{\mu_{t+h}}$ is the unique Monge transport map from $\mu_t$ to $\mu_{t+h}$.
\end{proposition}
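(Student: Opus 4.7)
The strategy is to study the difference quotient
\[
w_h(x) := h^{-1}\bigl(\Tmap{\mu_t}{\mu_{t+h}}(x) - x\bigr),
\]
which is well defined $\mu_t$-a.e.\ by \cref{prop:existence_monge_map}, and to exploit the isometric identity $\|w_h\|_{L^p(\mu_t)} = |h|^{-1}\wass_p(\mu_t,\mu_{t+h})$. At Lebesgue points of the metric derivative $|\mu'|(\cdot)\in L^1$, this ratio tends to $|\mu'|(t)<\infty$, so the family $\{w_h\}$ is bounded in $L^p(\mu_t)$ as $h\to 0$. Reflexivity of $L^p(\mu_t)$ for $1<p<\infty$ then lets me extract, along any sequence $h_n\to 0$, a subsequence with $w_{h_n}\rightharpoonup w$ weakly in $L^p(\mu_t)$.

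\textbf{Identifying the weak limit with $v_t$.} For $f\in C^\infty_c(\Rdim)$ I would rewrite $\int f\dif(\mu_{t+h}-\mu_t)$ using the push-forward through $\Tmap{\mu_t}{\mu_{t+h}}$ and Taylor-expand to obtain
\[
\frac{1}{h}\int f\dif(\mu_{t+h}-\mu_t) = \int\langle \nabla f, w_h\rangle\dif\mu_t + O\!\left(h\,\|\nabla^2 f\|_\infty\,\|w_h\|_{L^p(\mu_t)}^2\right).
\]
Sending $h_n\to 0$, the right-hand side converges by weak convergence to $\int\langle \nabla f, w\rangle\dif\mu_t$, while the left-hand side is the distributional time-derivative $\partial_t\int f\dif\mu_t$, which equals $\int\langle \nabla f, v_t\rangle\dif\mu_t$ at Lebesgue points by the continuity equation of \cref{thm:ac_curves_and_velocity_fields}. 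Hence $w-v_t$ annihilates every gradient of a $C^\infty_c$-function in the $L^{p'}$--$L^p$ duality. To force $w=v_t$, I would invoke Brenier's theorem underlying \cref{prop:existence_monge_map}: each $\Tmap{\mu_t}{\mu_{t+h}}$ is the gradient of a convex potential $\varphi_h$, so $w_h = \nabla\bigl((\varphi_h - |\cdot|^2/2)/h\bigr)$ is itself a gradient. The convex cone of gradients is weakly closed by Mazur's theorem, placing $w$ in its $L^p(\mu_t)$-closure, the Wasserstein tangent space at $\mu_t$; since $v_t$ is the minimum-norm selection of \cref{thm:ac_curves_and_velocity_fields} and hence the unique element of that tangent space solving the continuity equation, the orthogonality forces $w=v_t$.

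\textbf{Weak to strong convergence.} Because $\|v_t\|_{L^p(\mu_t)}=|\mu'|(t)$ for the minimum-norm $v_t$, the norms $\|w_{h_n}\|_{L^p(\mu_t)}$ converge to $\|v_t\|_{L^p(\mu_t)}$. In the uniformly convex space $L^p(\mu_t)$ with $1<p<\infty$, weak convergence combined with norm convergence upgrades to strong convergence, so $w_{h_n}\to v_t$ in $L^p(\mu_t)$. Since every sequence $h_n\to 0$ admits a subsequence along which this holds with the same limit, the entire family converges: $w_h\to v_t$ in $L^p(\mu_t)$, which specialises at $p=2$ to the stated $L^2(\mu_t)$-convergence.

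\textbf{Main obstacle.} The crux is the identification step. It rests on two ingredients that are not spelled out in the preceding excerpt but are standard in the theory: the Brenier representation, which puts each $w_h$ inside the cone of gradients, and the tangent-space characterization of $v_t$ as the unique minimum-norm solution of the continuity equation. Once these are in hand, the $L^p$-boundedness via the $\wass_p$-isometry and the weak-to-strong upgrade via uniform convexity are routine.
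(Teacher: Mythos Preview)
The paper does not give its own proof of this proposition; it is quoted verbatim from \cite[Proposition~8.4.6]{ambrosio2008gradientflows}. Your outline follows essentially the strategy used there: bound $\|w_h\|_{L^p(\mu_t)}$ by the metric derivative, extract weak limits by reflexivity, identify the limit via the continuity equation, and upgrade weak to strong convergence by uniform convexity of $L^p$.

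There is, however, a genuine gap in your identification step. Brenier's theorem---that the optimal map is the gradient of a convex potential---is specific to the quadratic cost $p=2$. For $p\neq 2$ the $\wass_p$-optimal map has the form $T(x)=x-|\nabla\psi(x)|^{p'-2}\nabla\psi(x)$ for a $c$-concave potential $\psi$, so $w_h$ is \emph{not} a gradient and Mazur's theorem does not place the weak limit $w$ in the tangent cone. The identification is rescued by a minimum-norm argument whose ingredients you already have: from your Taylor step, $w$ satisfies $\int\langle\nabla f,w\rangle\dif\mu_t=\int\langle\nabla f,v_t\rangle\dif\mu_t$ for all $f\in C^\infty_c$, so $w$ lies in the closed affine set of velocities solving the continuity equation at time $t$; weak lower semicontinuity of the norm gives $\|w\|_{L^p(\mu_t)}\le\liminf_n\|w_{h_n}\|_{L^p(\mu_t)}=|\mu'|(t)=\|v_t\|_{L^p(\mu_t)}$; and since $L^p$ is strictly convex for $1<p<\infty$, the minimum-norm element of a closed affine set is unique, forcing $w=v_t$. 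This bypasses Brenier entirely and works for all $1<p<\infty$.

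A smaller point: your Taylor remainder $O(h\|\nabla^2 f\|_\infty\|w_h\|_{L^p}^2)$ tacitly controls $\int|w_h|^2\dif\mu_t$, which is not available from the $L^p$-bound when $p<2$. One fix is to write the remainder as $\int\langle\nabla f(\xi_x)-\nabla f(x),w_h(x)\rangle\dif\mu_t$ with $\xi_x$ on the segment $[x,T(x)]$; since $\|T-\textnormal{Id}\|_{L^p(\mu_t)}\to 0$, along a further subsequence $\nabla f(\xi_x)-\nabla f(x)\to 0$ $\mu_t$-a.e., and being bounded by $2\|\nabla f\|_\infty$ it converges to $0$ in $L^{p'}(\mu_t)$ by dominated convergence, which pairs with the $L^p$-bounded $w_h$ via H\"older.
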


\subsection{Simulated annealing on \texorpdfstring{$\Rdim$}{Rd}}

The stochastic differential equation
\[
    \dif X_t = -\nabla\obj(X_t) \dt + \sqrt{2\beta^{-1}}\dif B_t,
\]
 is referred to as the (overdamped) Langevin dynamics. Here, $B_t$ is a Brownian motion on $\Rdim$. This stochastic process has generator
\begin{equation}\label{eq:generator_langevin}
    \cL_\beta f = \frac{1}{\beta}\Delta f - \langle \nabla f, \nabla\obj\rangle.
\end{equation}
Under suitable conditions on $\obj$, this process has the Gibbs measure $\mu_\beta = Z_\beta^{-1}e^{-\beta\obj(x)}\dx$ as its unique invariant measure. 
Simulated annealing on $\Rdim$ \cite{van1987simulated,dekkers1991global,chiang1987diffusion} is the stochastic process with time-dependent diffusion coefficient given by the Langevin-like SDE
\begin{equation}\label{eq:simulated_annealing_sde}
    dX_t = -\nabla\obj(X_t)\dt + \sqrt{2\beta(t)^{-1}} \dif B_t.
\end{equation}
For each potential $\obj$ belonging to a fairly large class, there exists a constant $C_\obj > 0$ such that if \( \beta(t) \leq C_\obj \log(1+t)\), then, for any initial distribution, the solution of \eqref{eq:simulated_annealing_sde} converges weakly to $\mu_\infty$ as $t\to\infty$ \cite{chiang1987diffusion}. 

Conversely, one can consider the rate at which the Gibbs curve itself approaches its limiting measure $\mu_\infty$. %
Assuming that $\obj$ permits a unique global minimum $x^*$ with a positive definite Hessian $\nabla^2 \obj(x^*)$, then \cite[Theorem~3.8]{hasenpflug2024wasserstein} gives the following concentration rate for the Gibbs measure
\begin{equation}
    \wass_p(\mu_t, \mu_\infty) \leq K \beta(t)^{-1/2},
\end{equation}
for some positive constant $K$.

\section{A velocity field solution to the continuity equation} \label{sec:velocity_field}

In this section, we prove that the Gibbs curve, as a map from a finite interval $[0,T]$ to $\Psp_2(\Rdim)$, is absolutely continuous. For this, we make the following assumptions on $\obj$.

\begin{assumption}\label{ass:inf_obj_exists}
    $U$ is of class $C^1$ and satisfies $\inf_{x} \obj(x) > -\infty$. Moreover, $\{ x \in \Rdim \mid x = \inf_{x} \obj(x) \}$ is non-empty and compact. Without loss of generality, we also assume that $\inf_{x} \obj(x) = \min_{x} \obj(x) = 0$.
\end{assumption}

\begin{assumption}\label{ass:quadratic_tail}
    There exist $\alpha >0,$ $R > 0$ such that $|x| > R \implies \obj(x) \geq \alpha |x|^2$.
\end{assumption}

\begin{assumption}\label{ass:quadratic_gradient}
    There exist $\alpha > 0, R > 0$ such that \( |x| > R \implies    \langle\nabla\obj(x),x\rangle \geq \alpha |x|^2. \)
\end{assumption}

Given a potential $\obj$ satisfying the above assumptions and a suitable cooling schedule $\beta$, we from this point on reserve the notation $\mu_t$ for the Gibbs curve, and $\pi_t$ for the corresponding densities, that is, 
\begin{align}
    \mu_t (\dx) &:= \pi_t(x)\dx, \\
    \pi_t(x) &:= \frac{e^{-\beta(t)\obj(x)}}{\int_\Rdim e^{-\beta(t)\obj(y)}\dy}.
\end{align}
We state now the main result of this section.

\begin{theorem}\label{thm:abs_cont}
    Assume that $\obj\in C^1(\Rdim, \RR)$ satisfies \cref{ass:inf_obj_exists,ass:quadratic_tail,ass:quadratic_gradient}. Let $\beta\in C^1([0,T_1),[1,\infty))$ be a differentiable cooling schedule. Then, for every $T \leq T_1$ such that $\lim_{t\uparrow T} \beta(t) < \infty$, the curve
    \begin{alignat*}{2}
        & \eta \colon & [0,T) &\to \Psp_2(\Rdim) \\
        &              & t &\mapsto \mu_t
    \end{alignat*}
    is absolutely continuous. Further, for every $t\in[0,T)$ there exists a unique velocity field $v_t$ of minimal $L^2(\mu_t)$-norm such that $\mu_t$ and $v_t$ satisfies \eqref{eq:focker_planck_v}.
\end{theorem}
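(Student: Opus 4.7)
The strategy is to construct the candidate velocity field explicitly as a gradient $v_t = \nabla\phi_t$, where $\phi_t$ solves a Poisson equation driven by the time-derivative of the log-density, and then invoke the converse direction of \cref{thm:ac_curves_and_velocity_fields} together with \cref{prop:v_as_monge_limit}. Differentiating $\pi_t = e^{-\beta(t)U}/Z_{\beta(t)}$ formally in time yields
\[
    \partial_t\pi_t = -\beta'(t)(U - \EEx{\mu_t}[U])\pi_t =: -\beta'(t)\,\tilde U_t\,\pi_t,
\]
since $\partial_t \log Z_{\beta(t)} = -\beta'(t)\,\EEx{\mu_t}[U]$. Looking for $v_t = \nabla\phi_t$, the continuity equation $\partial_t\mu_t + \nabla\cdot(v_t\mu_t)=0$ becomes, in weak form,
\[
    \int_{\Rdim} \langle \nabla\phi_t, \nabla f\rangle \dif\mu_t = \beta'(t)\int_{\Rdim} \tilde U_t\, f \dif\mu_t \qquad \forall f \in C^\infty_c(\Rdim).
\]

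\textbf{Preparations.} Two ingredients are needed. First, a Poincar\'e inequality for $\mu_t$: under \cref{ass:quadratic_tail}, $\pi_t$ has Gaussian-like tails (uniformly for $t$ with $\beta(t)$ bounded), so by a Holley--Stroock--type comparison together with the quadratic decay at infinity, $C_P(\mu_t) < \infty$ and the map $t \mapsto C_P(\mu_t)$ is bounded on $[0,T]$. Second, $\tilde U_t \in L^2(\mu_t)$: \cref{ass:quadratic_tail} combined with boundedness of $\beta$ on $[0,T]$ forces all polynomial moments of $\mu_t$ to be finite, and in fact $\sup_{t\in[0,T]}\|\tilde U_t\|_{L^2(\mu_t)} < \infty$ with continuous dependence on $t$ (by dominated convergence, since for $t$ in a compact subinterval the densities $\pi_t$ are uniformly dominated by an integrable Gaussian-like envelope).

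\textbf{Solving the Poisson equation and verifying the continuity equation.} Given these, the bilinear form $(f,g) \mapsto \int \langle\nabla f,\nabla g\rangle\dif\mu_t$ is coercive on the quotient space $H^1(\mu_t)/\RR$ by the Poincar\'e inequality, and the linear functional $f \mapsto \beta'(t)\int \tilde U_t f \dif\mu_t$ is bounded on the same space (since $\tilde U_t$ is mean-zero and in $L^2(\mu_t)$). Lax--Milgram then produces a unique weak solution $\phi_t \in H^1(\mu_t)$ (normalized to have $\mu_t$-mean zero), and setting $v_t := \nabla\phi_t$ gives a vector field in $L^2(\mu_t;\Rdim)$ satisfying the continuity equation in the distributional sense of \cref{thm:ac_curves_and_velocity_fields}. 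Taking $f = \phi_t$ in the weak form and applying Cauchy--Schwarz and the Poincar\'e inequality yields
\[
    \|v_t\|_{L^2(\mu_t)} \leq |\beta'(t)|\, C_P(\mu_t)\, \|\tilde U_t\|_{L^2(\mu_t)}.
\]
Because $\beta \in C^1$ and the supremum of $\beta$ on $[0,T]$ is finite by hypothesis, each of the three factors is bounded on $[0,T]$, hence $\|v_t\|_{L^2(\mu_t)} \in L^\infty([0,T]) \subset L^1([0,T])$. The converse direction of \cref{thm:ac_curves_and_velocity_fields} then gives absolute continuity of $t\mapsto\mu_t$ in $(\Psp_2(\Rdim),\wass_2)$.

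\textbf{Uniqueness of the minimal-norm field.} Any two velocity fields solving the continuity equation at time $t$ differ by an element of $\{w \in L^2(\mu_t;\Rdim) : \nabla\cdot(w\mu_t)=0\}$, which is orthogonal in $L^2(\mu_t)$ to the closure of gradients $\overline{\{\nabla\psi : \psi \in C^\infty_c\}}$. Hence the minimal $L^2(\mu_t)$-norm solution is the orthogonal projection onto this closure, which is exactly $\nabla\phi_t$ and is unique; this also matches the transport-map characterization in \cref{prop:v_as_monge_limit}. \textbf{Main obstacle.} The delicate step is step (i), establishing a Poincar\'e inequality for $\mu_t$ with a constant that is not only finite but also locally bounded (measurably depending on $t$), on the unbounded domain $\Rdim$; this is what allows both the Poisson equation to be well-posed and the $L^1$-integrability of $\|v_t\|_{L^2(\mu_t)}$ on $[0,T]$. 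The quadratic tail assumption (\cref{ass:quadratic_tail}) is what drives this, via a comparison of $\pi_t$ with a log-concave reference measure.
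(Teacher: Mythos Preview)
Your approach is essentially the same as the paper's: compute $\partial_t\pi_t$ explicitly, make the gradient ansatz $v_t=\nabla\phi_t$, solve the resulting weak Poisson equation in $H^1(\mu_t)/\RR$ via Lax--Milgram (the paper uses the equivalent Riesz representation on $\dot H^1_t$) using a Poincar\'e inequality for $\mu_t$, bound $\|v_t\|_{L^2(\mu_t)}$ by $|\beta'(t)|\sqrt{C_P(\mu_t)}\sqrt{\var_{\mu_t}U}$, and invoke the converse direction of \cref{thm:ac_curves_and_velocity_fields}. Your minimality argument via orthogonality to divergence-free fields is also what the paper does (citing \cite[Proposition~8.4.5]{ambrosio2008gradientflows}).

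The one place where your sketch is genuinely shaky is the justification of the Poincar\'e inequality. You attribute it to \cref{ass:quadratic_tail} via a ``Holley--Stroock--type comparison'' with a log-concave reference, but this does not go through as stated: writing $\beta U = \beta\alpha|x|^2 + \beta(U-\alpha|x|^2)$, the perturbation term is bounded \emph{below} (on $|x|\le R$ by continuity of $U$, on $|x|>R$ it is $\ge 0$ by \cref{ass:quadratic_tail}) but has no upper bound in general, so Holley--Stroock does not apply. The paper instead proves the Poincar\'e inequality (\cref{lem:poinc_up_bound}) by a Lyapunov function argument from \cite{bakry2008simple}, taking $W(x)=|x|^2+1$ and using \cref{ass:quadratic_gradient} (the drift condition $\langle\nabla U(x),x\rangle\ge\alpha|x|^2$) rather than \cref{ass:quadratic_tail}; this yields the explicit bound $C_P(\mu_t)\le A(1+Be^{\beta(t)K})$, finite and locally bounded precisely because $\beta(T)<\infty$. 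This is exactly the ``delicate step'' you flagged at the end, and it is where \cref{ass:quadratic_gradient} actually enters the proof.
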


We first approach this theorem by an illustrative example, namely the case when $\obj$ is a Gaussian potential, in the following section. After this, in \cref{sec:v_exists_general_case}, we turn our attention to the general statement of the theorem.

\subsection{The velocity field in the Gaussian case}\label{sec:gaussian_v}

When $\obj$ is a Gaussian potential the analysis is greatly simplified by the existence of explicit solutions to the intermediate transport problems. Let $\obj(x)=\frac{1}{2}x^T\Sigma^{-1}x$, with $\Sigma$ positive definite, and let $\beta\colon[0,T) \to [1,\infty)$ be a smooth non-decreasing cooling schedule. Then, $t\mapsto \mu_t= \cN(0, {\beta(t)}^{-1}\Sigma)$ is a curve of centred Gaussian measures in $(\Psp_2(\Rdim), \wass_2)$ such that the variance tends to $0$ as $\beta\to\infty$.
 For any two Gaussian distributions $\nu_i = \cN(m_i, \Sigma_i)$, $i =1,2$, we have that 
\[
    \wass_2^2(\nu_1,\nu_2) = |m_1-m_2|^2 + \text{Trace}\left(\Sigma_1 + \Sigma_2 - 2\left(\Sigma_1^{1/2}\Sigma_2\Sigma_1^{1/2}\right)^{1/2}\right),
\]
see, e.g., \cite{dowson1982frechet, givens1984class, knott1984optimal, peyre2019computational}.
Additionally, when $\Sigma_1\Sigma_2 = \Sigma_2\Sigma_1$, we have
\[
    \wass_2^2(\nu_1,\nu_2) = |m_1-m_2|^2 + \left\|\Sigma_1^{1/2} - \Sigma_2^{1/2}\right\|_{\text{Fr}}^2,
\]
and therefore
\begin{align*}
    \wass_2^2(\mu_t,\mu_{t+h}) & = \left\|(\beta(t)^{-1/2} - \beta(t+h)^{-1/2}) \Sigma^{1/2}\right\|_{\text{Fr}}^2 \\
                       & = \left|\beta(t)^{-1/2} - \beta(t+h)^{-1/2}\right|^2 \left\|\Sigma^{1/2}\right\|_{\text{Fr}}^2.
\end{align*}
We verify that this is an absolutely continuous curve by verifying that the metric derivative $|\mu'|(t)$ is in $L^1([0,T])$. As $\beta$ is differentiable and non-decreasing, we find that
\begin{align*}
    |\mu'|(t)& = \lim_{h \downarrow  0} h^{-1}\wass_2(\mu_t,\mu_{t+h})                   \\
             & = \lim_{h\downarrow 0} \|\Sigma^{1/2}\|_{\text{Fr}}\, h^{-1}(\beta(t)^{-1/2} - \beta(t+h)^{-1/2}) \\
             & = -\frac{d}{\dt}(\|\Sigma^{1/2}\|_{\text{Fr}}\, \beta(t)^{-1/2}) \\
             & = \frac{\|\Sigma^{1/2}\|_{\text{Fr}}}{2} {\beta'(t)}{\beta(t)^{-3/2}}, 
\end{align*}
which clearly exists for all $t$. Hence
\begin{align*}
    \int_0^T |\mu'|(t) \dt &= \int_0^T -\frac{d}{\dt}(\|\Sigma^{1/2}\|_{\text{Fr}}\,\beta(t)^{-1/2}) \dt \\
    &= \|\Sigma^{1/2}\|_{\text{Fr}}\,(\beta(0)^{-1/2} - \beta(T)^{-1/2} ) < \infty,
\end{align*}
as long as $\beta(0) > 0$. We note that this is finite even when $\lim_{t\to T} \beta(t) = \infty$, and also in the case when $T\to\infty$.

The Monge map $T$ between $\nu_1$ and $\nu_2$ is given by \cite[Remark 2.31]{peyre2019computational}
\[
    T(x) = m_2 + \Sigma_1^{-1/2}\left(\Sigma_1^{1/2}\Sigma_2\Sigma_1^{1/2}\right)^{1/2}\Sigma_1^{-1/2}(x-m_1),
\]
which in the commuting case $\Sigma_1\Sigma_2 = \Sigma_2\Sigma_1$ similarly reduces to \( T(x) = m_2 + \Sigma_2^{1/2}\Sigma_1^{-1/2}(x-m_1). \)
We find that \( T_{\mu_t\to\mu_{t+h}}(x) = \frac{\beta(t+h)^{1/2}}{\beta(t)^{1/2}}x, \)
independently of $\Sigma$. We find by an application of \cref{prop:v_as_monge_limit} that
\begin{align*}    
    v_t(x) &= \lim_{h\to 0} \frac{T_{\mu_t\to\mu_{t+h}}(x) - x}{h} \\
           &= \lim_{h\to 0} \frac{1}{h}\left(\frac{\beta(t+h)^{1/2}}{\beta(t)^{1/2}} - 1\right)x \\
           &= -\frac{1}{2}\frac{\beta'(t)}{\beta(t)}x.
\end{align*}

\begin{remark}
We note that $v_t$ above does not depend on the covariance $\Sigma$. Further, when $\beta'(t)/\beta(t)$ is constant, i.e., $\beta(t)\propto e^{ct}$ then
\[
    v_t(x) = -cx =: \nabla f (x). 
\]
Since this is time-homogeneous, the exponentially cooling Gaussian curve in $\Psp_2$ corresponds to the gradient flow of the convex functional $\cF(\mu) = \int f(x) \dif \mu$ with $f(x) = -c|x|^2/2$, see for instance \cite[Example~2.1]{kazeykina2024ergodicity}.
\end{remark}

\begin{remark}[Unit speed parametrization]
Any absolutely continuous curve permits a unit speed parametrisation \cite[Lemma~1.1.4]{ambrosio2008gradientflows}.
Finding the unit speed curve amounts to solving $|\mu'|(t) = 1$, which in this case reduces to finding a cooling schedule which satisfies
$$
    \beta'(t) = 2a^{-1}\beta(t)^{3/2}, \quad \beta(0) = 1,
$$
with $a := \|\Sigma^{1/2}\|_{\text{Fr}}$. The solution to this differential equation is $\beta(t) = a^2(a-t)^{-2}$, and the curve reaches the endpoint $\delta_0$ at $t=a=\|\Sigma^{1/2}\|_{\text{Fr}}$, i.e., in finite time.
\end{remark}

\subsection{The velocity field in the general case}\label{sec:v_exists_general_case}

This section is devoted to proving \cref{thm:abs_cont}, i.e., showing that the Gibbs curve $t \mapsto \mu_t \in (\sP_2(\Rdim), \wass_2)$ is absolutely continuous. To this end, we first note that due to the Gaussian tails, $\mu_t \in \sP_p$ for any $p$. 

\begin{lemma}\label{lem:gibbs_in_Pp}
    Assume that $\obj$ satisfies \cref{ass:inf_obj_exists,ass:quadratic_tail}. Then, for all $\beta \in(0,\infty)$ and $p\geq 1$, the Gibbs measure satisfies $\mu_\beta \in \Psp_p(\Rdim)$.
\end{lemma}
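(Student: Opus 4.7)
The plan is to verify directly the definition of $\Psp_p(\Rdim)$: choosing $x_0 = 0$, I need to show that
\[
    \int_{\Rdim} |x|^p \,\mu_\beta(\dx) = Z_\beta^{-1} \int_{\Rdim} |x|^p \, e^{-\beta \obj(x)} \dx < \infty,
\]
together with $0 < Z_\beta < \infty$ so that $\mu_\beta$ is indeed a well-defined probability measure.

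First, I would split both integrals into the compact ball $B_R = \{|x| \leq R\}$ and its complement, using the radius $R$ from \cref{ass:quadratic_tail}. On $B_R$, the assumption $\obj \geq 0$ (part of \cref{ass:inf_obj_exists}) gives $e^{-\beta \obj(x)} \leq 1$, so both $\int_{B_R} e^{-\beta \obj(x)} \dx$ and $\int_{B_R} |x|^p e^{-\beta \obj(x)} \dx$ are bounded above by the Lebesgue volume of $B_R$ and $R^p$ times that volume, respectively. On $B_R^c$, \cref{ass:quadratic_tail} gives $\obj(x) \geq \alpha |x|^2$, hence $e^{-\beta \obj(x)} \leq e^{-\beta \alpha |x|^2}$, and so the integrals are bounded by Gaussian moment integrals
\[
    \int_{B_R^c} |x|^p e^{-\beta \alpha |x|^2} \dx \leq \int_{\Rdim} |x|^p e^{-\beta \alpha |x|^2} \dx < \infty,
\]
which is a standard finite Gaussian moment. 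The case $p=0$ gives finiteness of $Z_\beta$, and $Z_\beta > 0$ is immediate from $e^{-\beta \obj} > 0$ on the open nonempty set $B_R$ (or on $\Rdim$).

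Combining these bounds establishes $\int |x|^p \mu_\beta(\dx) < \infty$ for every $p \geq 1$ and every $\beta \in (0,\infty)$. There is no significant obstacle here; the only thing to be careful about is making the constants in the Gaussian moment bound uniform enough in $p$ (they are, since for each fixed $p$ the integral is a finite explicit function of $\beta$ and $\alpha$), and noting that the argument works even when $\beta$ is small, since the critical tail control is provided by the quadratic lower bound on $\obj$ rather than by $\beta$ itself.
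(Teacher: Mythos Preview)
Your proposal is correct and follows essentially the same approach as the paper: split the integral into $\{|x|\leq R\}$ and $\{|x|>R\}$, use boundedness of $e^{-\beta\obj}$ on the compact ball, and bound the tail by a Gaussian moment via \cref{ass:quadratic_tail}. Your write-up is in fact slightly more careful (you keep the constant $\alpha$ in the exponent and note $Z_\beta>0$), but the argument is the same.
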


\begin{proof}
    Let $R$ be as in \eqref{ass:quadratic_tail}. Then $Z_\beta = \int_\Rdim e^{-\beta\obj(x)}\dx \leq D + \int_{|x|>R}e^{-\beta|x|^2} \dx < \infty$ for some constant $D$ and hence $\mu_\beta(\dx) = Z_\beta^{-1}e^{-\beta\obj(x)}\dx$ is a well-defined probability measure. Similarly,
    \[
        \int_\Rdim |x|^p \difempty\mu_\beta(\dx) %
                                  \leq R^p + \frac{1}{Z_\beta}\int_{|x|>R} |x|^pe^{-\beta|x|^2} \dx < \infty. \qedhere%
    \]
\end{proof}

The following lemma establishes a bound on the Poincar\'e constant of $\mu_t$. We give a proof of this by constructing a Lyapunov function and relying on a result in \cite{bakry2008simple}. For details, see \cref{app:proofs}.

\begin{restatable}%
{lemma}{poincarelemma}\label{lem:poinc_up_bound}
    Let $\obj$ satisfy \cref{ass:inf_obj_exists,ass:quadratic_gradient,ass:quadratic_tail} and let $\beta\colon[0,T]\to(0,\infty)$. Then, for every $t$, $\mu_t$ satisfies a Poincar\'e inequality:
    \[
        \textnormal{Var}_{\mu_t}(f) =  \int \left( f - \int f \dif\mu_t \right)^2\dif\mu_t \leq \poinc(\mu_t) \int |\nabla f |^2\dif\mu_t,
    \]
    for all $f \in C^\infty_c(\Rdim)$. Further, there exist positive constants $A,B,K$ such that, for all $t\in[0,T]$
    \[
        \poinc(\mu_t) \leq C_t :=A(1+Be^{\beta(t)K}).
    \]
\end{restatable}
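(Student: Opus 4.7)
The strategy is to apply the Lyapunov-function criterion of Bakry--Barthe--Cattiaux--Guillin \cite{bakry2008simple}: it suffices to exhibit a smooth function $W \geq 1$ and positive constants $\lambda_t, b_t$ such that
\[
L_t W := \Delta W - \beta(t) \langle \nabla \obj, \nabla W\rangle \leq -\lambda_t W + b_t \mathbf{1}_{B_R}
\]
on some ball $B_R$, where $L_t$ is the $\mu_t$-symmetric generator, and then to combine this with a local Poincar\'e inequality for $\mu_t$ restricted to $B_R$.

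First, I would choose the Gaussian-type Lyapunov function $W(x) = \exp(s|x|^2)$ with $s$ a small positive constant, chosen uniformly in $t$ (using that $\inf_{t\in[0,T]}\beta(t) > 0$). A direct computation gives
\[
\frac{L_t W(x)}{W(x)} = 2\ddim s + 4s^2 |x|^2 - 2s\beta(t)\langle \nabla \obj(x), x\rangle.
\]
Applying \cref{ass:quadratic_gradient} on $\{|x|>R\}$, and taking $s$ sufficiently small (and $R$ enlarged if necessary), the coefficient of $|x|^2$ becomes strictly negative uniformly in $t$, yielding an estimate $L_t W \leq -\lambda_t W$ on $\{|x|>R\}$ with $\lambda_t$ proportional to $\beta(t)$. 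On the ball $\{|x|\leq R\}$ the quantity $|L_t W|$ is bounded by a term growing at most linearly in $\beta(t)$, so the Lyapunov inequality holds on all of $\Rdim$ with $b_t$ of order $\beta(t)$.

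Second, I would bound the local Poincar\'e constant of $\mu_t|_{B_R}$ via the Holley--Stroock perturbation principle: since the log-density of $\mu_t|_{B_R}$ relative to the uniform measure on $B_R$ has oscillation at most $\beta(t)\,\mathrm{osc}_{B_R}(\obj)$, the Poincar\'e constant of $\mu_t|_{B_R}$ is at most $e^{\beta(t)\,\mathrm{osc}_{B_R}(\obj)}$ times the Poincar\'e constant of the uniform measure on $B_R$. This is the step that produces the exponential factor $e^{\beta(t)K}$ in the final bound, with $K := \mathrm{osc}_{B_R}(\obj)$.

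Combining these two ingredients via \cite{bakry2008simple} gives $\poinc(\mu_t) \leq (1 + b_t e^{\beta(t)K} C_R)/\lambda_t$, which, after absorbing the polynomial prefactors and the term $1/\lambda_t$ into constants $A$ and $B$, matches the claimed bound $A(1 + Be^{\beta(t)K})$. I expect the main subtleties to lie in tracking $\lambda_t$ and $b_t$ uniformly over $t\in[0,T]$ and in verifying that $s$ and $R$ can be chosen independently of $t$; the remainder should be careful bookkeeping of constants.
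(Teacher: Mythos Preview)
Your proposal is correct and follows essentially the same route as the paper: apply the Lyapunov criterion of \cite{bakry2008simple} together with a Holley--Stroock bound on the local Poincar\'e constant of $\mu_t|_{B_R}$, the latter supplying the factor $e^{\beta(t)K}$ with $K=\mathrm{osc}_{B_R}(\obj)$. The only substantive difference is the choice of Lyapunov function: the paper takes the simpler polynomial $W(x)=|x|^2+1$, which makes both the drift rate $\theta$ and the constant $b$ independent of $t$ (so uniformity over $[0,T]$ is immediate), whereas your Gaussian $W(x)=e^{s|x|^2}$ produces $\lambda_t$ and $b_t$ each of order $\beta(t)$, so that their ratio stays bounded and the final estimate still has the required form after a little extra bookkeeping.
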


Since the Poincar\'e constant $\poinc(\mu_t)$ is bounded by the preceding lemma, the Sobolev space $H^1_t := H^1(\mu_t)$ is well-defined.
Next, note that for the Gibbs curve, the continuity equation for the density $\pi_t$ reads
\[
    \partial_t\pi_t(x) + \nabla \cdot(v_t(x)\pi_t(x)) = 0.
\]
Differentiating $\pi_t$ with respect to $t$, we find that
\begin{equation}\label{eq:dpidt}
    \partial_t\pi_t(x) = -\beta'(t)\left(\obj(x)-\int \obj \dif \mu_t\right)\pi_t(x).
\end{equation}
As a brief aside, we note that $\int f(x) \difempty\partial_t\pi_t(x)\dx = -\beta'(t)\,\covt(f,\obj)$, and hence we can understand $\partial_t\mu_t$ as measuring the covariance of a function $f$ against the potential $\obj$. By \eqref{eq:dpidt} we arrive at the following equation for $v_t$
\begin{equation}
\label{eq:gibbs_continuity_equation}
    \nabla\cdot (v_t(x)\pi_t(x))  = \beta'(t)\left(U(x) - \int U \dif \mu_t\right)\pi_t(x).
\end{equation}
In the proof of the following theorem, we will look for a solution to \eqref{eq:gibbs_continuity_equation} in the space of gradients of functions in $H^1$. To this end, as $\mu_t$ satisfies a Poincar\'e inequality the bilinear map $\langle\cdot,\cdot\rangle_{\nabla \mu_t}\colon H_t^1 \times H_t^1 \to \RR$, given by $\langle\cdot,\cdot\rangle_{\nabla \mu_t} := \vdot{\nabla \cdot}{\nabla \cdot}_{\mu_t}$, defines an inner product on the subspace of zero mean functions
\[
    \dot H_t^1 = \left\{f \in H^1_t \colon \int f \dif \mu_t = 0\right\}.
\]
To see this, first note that
\[
    \langle f, g \rangle_{\nabla \mu_t} = \int \langle \nabla f, \nabla g\rangle \dif \mu_t
\]
is well-defined by Cauchy--Schwarz. Moreover, we have that \( \langle f,f \rangle_{\nabla \mu_t} \geq 0, \) and by the Poincar\'e inequality, equality holds if and only if $f = 0$. We can now prove \cref{thm:abs_cont} in the following form:
\begin{theorem}\label{thm:gibbs_ac}
    Assume that $\beta\colon[0,T) \to [1,\infty)$ is differentiable and that $\obj$ satisfies \cref{ass:inf_obj_exists,ass:quadratic_tail,ass:quadratic_gradient}.
    Then, for each $t \in[0,T)$ there exists a unique $h_t \in \dot H^1_t$ such that the velocity field $v_t  = \nabla h_t \in L^2(\mu_t, \Rdim, \Rdim)$ solves the continuity equation
    \[
        -\nabla\cdot(v_t\mu_t) = \partial_t \mu_t
    \]
    in the sense that
    \begin{equation}\label{eq:v_t_weak_form}
        \int \langle v_t(x), \nabla f(x) \rangle\, \pi_t(x)\dx = \int f(x) \difempty\partial_t\pi_t(x)\dx, \quad \forall f \in C^\infty_c(\Rdim). 
    \end{equation}
    This $v_t$ is the vector field with minimal $\| \cdot \|_{L^2(\mu_t)}$-norm of all vector fields that satisfies \eqref{eq:v_t_weak_form}.
    Further, $\int_0^T \|v_t\|_{L^2(\mu_t)} \dt < \infty$, and hence the Gibbs curve is absolutely continuous.
\end{theorem}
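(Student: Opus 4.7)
The plan is to cast \eqref{eq:v_t_weak_form} as a Riesz representation problem on the Hilbert space $(\dot H^1_t, \langle\cdot,\cdot\rangle_{\nabla \mu_t})$, producing a unique $h_t$ at every fixed time, and then verify that $t \mapsto \|v_t\|_{L^2(\mu_t)}$ is integrable on $[0,T]$. Substituting \eqref{eq:dpidt} into \eqref{eq:v_t_weak_form} and writing $v_t = \nabla h_t$, the equation becomes
\[
    \langle h_t, f \rangle_{\nabla \mu_t} = -\beta'(t)\,\cov_{\mu_t}(f,\obj), \qquad \forall f \in C^\infty_c(\Rdim).
\]

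\textbf{Existence at fixed time.} I would first check that the right-hand side is a bounded linear functional on $\dot H^1_t$. By \cref{ass:quadratic_tail} one has $\obj \in L^2(\mu_t)$ (the integrand $\obj^2 e^{-\beta(t)\obj}$ is majorised by $C e^{-\beta(t)\obj/2}$ which decays like a Gaussian), hence $\var_{\mu_t}(\obj) < \infty$. Combining Cauchy--Schwarz for covariances with \cref{lem:poinc_up_bound} gives
\[
    |\cov_{\mu_t}(f, \obj)| \leq \sqrt{\var_{\mu_t}(f)\,\var_{\mu_t}(\obj)} \leq \sqrt{\poinc(\mu_t)\,\var_{\mu_t}(\obj)}\,\|\nabla f\|_{L^2(\mu_t)}.
\]
Since $C^\infty_c(\Rdim)$ is dense in $H^1_t$ by construction and the right-hand side of the weak equation vanishes on constants, the functional extends to a bounded linear functional on $\dot H^1_t$. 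Riesz representation then yields a unique $h_t \in \dot H^1_t$, and $v_t := \nabla h_t$ satisfies the pointwise norm bound $\|v_t\|_{L^2(\mu_t)} \leq |\beta'(t)|\sqrt{\poinc(\mu_t)\,\var_{\mu_t}(\obj)}$.

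\textbf{Minimality and time-integrability.} For any other $\tilde v_t \in L^2(\mu_t,\Rdim)$ solving \eqref{eq:v_t_weak_form}, the difference $\tilde v_t - v_t$ satisfies $\int \langle \tilde v_t - v_t, \nabla f\rangle\dif\mu_t = 0$ for every $f \in C^\infty_c$. Because $v_t = \nabla h_t$ lies in the $L^2(\mu_t)$-closure of such gradients, passing to the limit gives $\langle \tilde v_t - v_t, v_t\rangle_{L^2(\mu_t)} = 0$, and Pythagoras yields $\|\tilde v_t\|_{L^2(\mu_t)}^2 = \|v_t\|_{L^2(\mu_t)}^2 + \|\tilde v_t - v_t\|_{L^2(\mu_t)}^2$. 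For integrability in time, the pointwise bound above together with the continuity of $\beta,\beta'$ on $[0,T]$ reduces the question to uniform control on $[0,T]$ of the two factors $\poinc(\mu_t)$ and $\var_{\mu_t}(\obj)$. The first is handled by \cref{lem:poinc_up_bound} since $\lim_{t\uparrow T}\beta(t) < \infty$; the second follows from the same Gaussian-majorant argument as above, together with a uniform lower bound on $Z_{\beta(t)}$ obtained by restricting the defining integral to a small ball around a global minimizer where $\obj$ is small. Absolute continuity of $t \mapsto \mu_t$ then follows from the converse direction of \cref{thm:ac_curves_and_velocity_fields}.

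\textbf{Main obstacle.} The delicate point is the uniform control of the norm bound on $[0,T]$: the bound in \cref{lem:poinc_up_bound} grows exponentially in $\beta(t)$, so the argument decisively uses the assumption $\lim_{t\uparrow T}\beta(t) < \infty$. Extending the result to endpoints where $\beta$ blows up cannot be done through this Riesz-based estimate and would require exploiting cancellations in $v_t$ itself, as in the Gaussian example in \cref{sec:gaussian_v}.
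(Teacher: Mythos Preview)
Your proposal is correct and follows essentially the same route as the paper: bound the linear functional $f\mapsto -\beta'(t)\cov_{\mu_t}(f,\obj)$ on $\dot H^1_t$ via Cauchy--Schwarz and the Poincar\'e inequality of \cref{lem:poinc_up_bound}, apply Riesz representation to obtain $h_t$, and deduce integrability of $\|v_t\|_{L^2(\mu_t)}$ from the resulting pointwise bound together with $\sup_{[0,T]}\beta<\infty$. The only cosmetic differences are that the paper cites a separate lemma (moment-generating function) for $\var_{\mu_t}(\obj)<\infty$ and invokes \cite[Proposition~8.4.5]{ambrosio2008gradientflows} for minimality, whereas you spell out the Gaussian-tail bound and the orthogonality/Pythagoras argument directly.
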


\begin{proof}
    Consider the linear functional $\phi\colon \dot H^1_t\to\RR$ given by
    \begin{align}\label{eq:phi}
        \phi(f) &= \int f(x) \difempty\partial_t\pi_t(x) \dx = -\beta'(t) \int f(x)\left(\obj(x)-\int \obj d\mu_t\right)\pi_t(x) \dx.
    \end{align}
    By applying H\"{o}lder's inequality (Cauchy--Schwarz) we find that
    \begin{align*}
        |\phi(f)| &\leq \left|\beta'(t)\right| \left(\int |f(x)|^2 \pi_t(x) \dx \right)^{1/2} \left(\int\left(\obj(x)-\int \obj \dif \mu_t\right)^2\pi_t(x) \dx\right)^{1/2}\\
            &= |\beta'(t)|\sqrt{\var_{\mu_t} f }\sqrt{\var_{\mu_t}\obj},
    \end{align*}
    In conjunction with the bound $C_t$ on the Poincar\'e constant of $\mu_t$ in \cref{lem:poinc_up_bound} we find then the following estimate for the dual norm of $\phi$:
        \begin{align*}
        \| \phi \|_{(\dot H^1_t)^*} &= \sup_{f\in \dot H^1_t} \frac{\left|\phi(f)\right|}{\sqrt{\langle \nabla f, \nabla f\rangle_{\mu_t}}} \\
         &\leq  \sup_{f\in \dot H^1_t} \frac{|\beta'(t)|\sqrt{\var_{\mu_t} f }\sqrt{\var_{\mu_t}\obj}}{\sqrt{\langle \nabla f, \nabla f\rangle_{\mu_t}}} \\
         &= |\beta'(t)| \sqrt{C_P(\mu_t)} \sqrt{\vart \obj} \\
         &\leq |\beta'(t)|\sqrt{C_t}\sqrt{\vart\obj}.
    \end{align*}

    Since $\sqrt{\vart \obj}$ is bounded (\cref{lem:u_bounded_variance}), the linear functional $\phi$ is continuous. By the Riesz representation theorem \cite[p.~109]{luenberger1969optimization} there exists a unique element $h_t$ in the Hilbert space $(\dot H^1_t, \langle \cdot, \cdot\rangle_{\nabla \mu_t})$ such that
    \[
        \phi(f) = \langle h_t, f \rangle_{\nabla\mu_t} \quad \text{and} \quad \|\phi\|_{(\dot H^1_t)^*} = \|h_t\|_{\dot H^1_t}.
    \]
    From the first of these equations we find in particular that, for all $f \in C^\infty_c \subset \dot H^1_t$,
    \[
        \int f(x) \difempty\partial_t\pi_t(x) \dx = \int \langle \nabla h_t(x), \nabla f(x)\rangle \,\pi_t(x) \dx,
    \]
    so $v_t := \nabla h_t$ solves the continuity equation in the prescribed sense, \eqref{eq:v_t_weak_form}. Further,
    \[
        \|v_t\|_{L^2(\mu_t)} = \|\nabla h_t\|_{L^2(\mu_t)} = \|h_t\|_{\dot H^1_t} = \|\phi\|_{(\dot H^1_t)^*} \leq 
         |\beta'(t)|\sqrt{C_t}\sqrt{\var_{\mu_t}\obj}.
    \]
    Since $\sup_{t\in[0,T]} |\beta'(t)|\sqrt{C_t}\sqrt{\var_{\mu_t}\obj} < \infty$, $\|v_t\|_{L^2(\mu_t)} \in L^1([0,T])$ and by \cite[Theorem~8.3.1]{ambrosio2008gradientflows}, phrased as Theorem~\ref{thm:ac_curves_and_velocity_fields}, the Gibbs curve is absolutely continuous.

    Finally, the fact that the solution $v_t$ is the element with minimal $\| \cdot \|_{L^2(\mu_t)}$-norm of all solutions to \eqref{eq:v_t_weak_form} follows from \cite[Proposition~8.4.5]{ambrosio2008gradientflows}, by noting that
\[
v_t = \nabla h_t \in \overline{\{ \nabla \varphi \colon \varphi \in C^\infty_c(\Rdim)  \}}^{L^2(\mu_t)}, %
\]
where $\overline{\phantom{A}}^{L^2(\mu_t)}$ denotes the closure in the $L^2(\mu_t)$-norm.
\end{proof}

\begin{remark}
In the preceding proof, it is interesting to note that the ansatz $v_t = \nabla h_t$ turns the continuity equation \eqref{eq:gibbs_continuity_equation} into an elliptic PDE.
To this end, let $\cL_t$ be the operator defined by
\[
    \cL_tf := \frac{1}{\beta(t)}\Delta f - \langle \nabla\obj, \nabla f\rangle.
\]
As $\pi_t(x) > 0$ for all $x$, this allows us to rewrite \eqref{eq:gibbs_continuity_equation} as
\begin{equation}\label{eq:Lh=pttpit}
    \cL_th_t(x) = \frac{\beta'(t)}{\beta(t)}\left(\obj(x)-\int\obj \dif \mu_t\right) =: g_t(x).
\end{equation}

Assuming that the preceding equation permits a sufficiently regular (classical) solution $h_t$, It\^o's formula yields a probabilistic representation of $h_t$ by
\begin{align}\label{eq:probabilistic_ht}
    h_{t}(x) = \int h_t \dif \mu_t -\lim_{S\rightarrow \infty}\mathbb{E}\left[\int_{0}^{S} g_{t}(Y^x_s) \dif s\right], %
\end{align}
where %
$Y^x_s$ is a solution to the overdamped Langevin SDE at a fixed temperature $\beta(t)$ starting at $x$:
\begin{align*}%
    \dif Y^x_s= - \nabla \obj(Y^x_s) \dif s + \sqrt{\frac{2}{\beta(t)}} \dif B_s, \quad Y^x_0 = x.
\end{align*}
Indeed, an application of It\^o's formula yields that   %
\begin{align*}
    h_{t}(Y^x_S) = h_{t}(x) + \int_{0}^{S} \cL_t h_{t}(Y_s^x) \dif s + \frac{2}{\beta(t)}\int_{0}^{S} \nabla \cdot  h_{t}(Y_s^x) \dif B_s.
\end{align*}
We then find this probabilistic interpretation by first taking the expected value of both sides
\begin{align*}%
    \EE \left[ h_{t}(Y^x_S) \right] = h_{t}(x) + \EE \left[\int_{0}^{S} \cL_t h_{t}(Y^x_s) \dif s \right],
\end{align*}
and then noting that, under \cref{ass:inf_obj_exists,ass:quadratic_tail,ass:quadratic_gradient}, the Markov process $Y^x_s$ is \emph{geometrically ergodic}, and specifically it holds that $\mathbb{E} \left[h_{t}(Y^x_S)\right] \rightarrow  \int h_t\dif\mu_t $ as $S \rightarrow \infty$, for suitably regular $h_t$.
Clearly, $\int g_t \dif\mu_t=0$, and again, by the geometric ergodicity of $Y^x_s$ we find that $|\mathbb{E}\left[g_{t}(Y_s^x)\right]| \leq c_1 e^{-c_2 s}$, for some $c_1, c_2 >0$ independent of $s$. Hence, as the right-hand side of \eqref{eq:probabilistic_ht} is finite, the claim follows. %
\end{remark}

\subsection{On finiteness of \texorpdfstring{$\beta(T)$}{β(T)} and an exponentially bad Poincar\'e constant}\label{sec:finite_end}

We elaborate here on the restriction of finite end inverse temperatures $\beta(T)$. First, we consider the strength of the previous results in the Gaussian setting of Section \ref{sec:gaussian_v}.

\begin{example}\label{ex:gauss_unit_speed}
    Picking again $\obj(x)=|x|^2/2$ so that $\mu_t = \cN(0,\beta(t)^{-1}I_\ddim)$. Assuming that $\beta' \geq 0$,
    we now investigate the curve length bound
    \[
        \ell \leq \int_0^T \left(\beta'(t)\sqrt{C_t}\sqrt{\vart(U)}\right) \dt.
    \]
    A straightforward computation yields
    \begin{align*}
        \vart(U) &= \vart(|x|^2/2) \\  
         & = \frac{1}{4}\textnormal{Var}\left(\sum_{i=1}^\ddim \beta(t)^{-1}Y_i^2\right) \\
                                              & = \frac{1}{4}\frac{1}{\beta(t)^2}2\ddim,
    \end{align*}
    where we denoted by $Y_i$ independent standard Gaussian variables on $\RR$.
    As \( \sqrt{\vart(|x|^2/2)} = \sqrt{\frac{\ddim}{2}}\frac{1}{\beta(t)} \) we have that
    \[
        \ell \leq \frac{\ddim}{2} \int_0^T\left( \frac{\beta'(t)}{\beta(t)}\sqrt{C_t}\right) \dt,
    \]    
    from which we can see that if $\beta$ diverges, one needs additional control on the Poincar\'e constant to ensure that this bound guarantees integrability. In this case, it is known that the optimal Poincar\'e constant is $ \poinc \propto \frac{1}{\sqrt{\beta}}$,
    which yields that, for some constant $C > 0$,
    \[
        \ell \leq C \int_0^T \beta'(t)\beta(t)^{-5/4} \dt = 4C(\beta(0)^{-1/4} - \beta(T)^{-1/4}),
    \]
    which converges even if $\lim_{t\to T}\beta(t) = \infty$.
\end{example}

As we just saw in the Gaussian case, the curve has finite length even as $\beta\to\infty$. This would in general be desirable; however, we note that for potentials satisfying \cref{ass:inf_obj_exists,ass:quadratic_tail,ass:quadratic_gradient}, the current proof technique cannot show this. In the proof of \cref{thm:gibbs_ac}, we bound the curve length using the Poincar\'e inequality in conjunction with the exponential bound on the Poincar\'e constant in \cref{lem:poinc_up_bound}. In the next example, we show that there exist cases where the exponential bound on the Poincar\'e constant is not overly pessimistic. That is, in general, as the inverse temperature grows large, the Poincar\'e constant can get exponentially bad.

\begin{example}[An exponentially bad Poincar\'e constant for a potential with a single global minima]

Consider now a probability measure $\mu(\dx) = \pi(x)\dx$ on $\RR$, with median $m$ such that $\mu([m,+\infty))\geq \frac{1}{2}$ and $\mu((-\infty,m])\geq \frac{1}{2}$. By \emph{Muckenhoupt's criterion}, we have the following lower bound on the Poincar\'e constant of $\mu$:
\[
  \poinc(\mu) \geq \frac{1}{2}\sup_{y\colon y > m} \mu([y,+\infty))\int_m^y\frac{1}{\pi(x)} \dif x,
\]
see \cite[Theorem~4.5.1]{bakry2014analysis}. Further, let $I=(a,b) \subset \RR$ be an open interval with $\mu(I) > 0$, and let $\mu_{\mid I}$ be the restriction of $\mu$ to $I$ defined by \( \mu_{\mid I}(A) = \mu(A \cap I) / \mu(I) \). Then $\poinc(\mu_{|I}) \leq \poinc(\mu)$, that is, restricting the measure to a sub-interval can not make the Poincar\'e inequality weaker.
    
Following Miclo \cite{miclo2008poinc}, let $\obj$ be an even potential on $\RR$. Fix an inverse temperature $\beta$ and let $\mu = \frac{1}{Z}e^{-\beta \obj(x)}\dx$. Since $\obj$ is even $\mu$ has median $m=0$. Relabel and renormalize it to a restriction $\mu$ on some symmetric interval $(-a,a)$.
By Muckenhoupt's criterion we have that
\begin{align*}
    2\poinc(\mu) & \geq \sup_{y\colon 0<y<a} \mu([y,a))\int_0^y\frac{1}{\pi(x)} \dx                                                                               \\
            & = \sup_{y\colon 0<y<a} \left[\int_y^ae^{-\beta\obj(x)}\dx\right]\left[\int_0^ye^{\beta\obj(x)} \dx\right],                           
\intertext{and by Jensen's inequality we find that}
        2\poinc(\mu) & \geq \sup_{y\colon 0<y<a} (a-y)\left[e^{-\frac{1}{a-y}\int_y^a\beta\obj(x)\dx}\right]\cdot y\left[e^{\frac{1}{y}\int_0^y\beta\obj(x)\dx}\right] \\
            & = \sup_{y\colon 0<y<a} y(a-y)\exp\left[\beta\left(\frac{1}{y}\int_0^y\obj(x) \dx-\frac{1}{a-y}\int_y^a\obj(x) \dx \right)\right].                   
\end{align*}
By the restriction property it is thus sufficient to design an even function such that there is a pair $0<y<a$ where $\obj$ is on (Lebesgue-)average greater on the interval $(0,y)$ than on $(y,a)$, i.e., such that
\[
    \frac{1}{y}\int_0^y\obj(x) \dx > \frac{1}{a-y}\int_y^a\obj(x) \dx.
\]
Then the pair $(y,a)$ gives a lower bound on the supremum and $e^{c\beta} \lesssim \poinc(\mu_\beta)$.
Led by this, we can be constructive and pick \( \obj(x) = x^2 - 5\cos(\pi x)+5 \), for which we find that
\[
    \frac{1}{1.7}\int_0^{1.7}\obj(x) \dx \approx 1.7 \quad \text{and} \quad
    \frac{1}{0.3} \int_{1.7}^{2.1} \obj(x) \dx \approx -1.1.
\]
We see then that $c\approx 3$ and the Poincar\'e constant is lower bounded by
$\exp(3\beta) \lesssim \poinc(\mu_\beta)$. To conclude, we note that this choice of $\obj$ satisfies \cref{ass:inf_obj_exists,ass:quadratic_tail,ass:quadratic_gradient}.
\end{example}

\section{Controlled stochastic annealing via superposition}\label{sec:controlled_superposition}

The goal of this section is to establish that there exists stochastic processes driven by both the velocity field $v_t$ and additional stochastic dynamics. Throughout this section we assume that $\obj$ satisfies Assumptions \ref{ass:inf_obj_exists}, \ref{ass:quadratic_tail}, and \ref{ass:quadratic_gradient}. In order to simplify the arguments, we assume that the tail behaviour of the velocity fields is similar to the ones we observed in the Gaussian example.

\begin{assumption}[Linear growth on $v_t$]\label{ass:v_linear_growth}
    There exists constants $c_1, c_2$ such that,
    for all $t > 0$, $x \in \Rdim$,
    \[
        |v_t(x)| \leq c_1 |x| + c_2.
    \]
    
\end{assumption}

\subsection{Controlled simulated annealing with diffusive dynamics}\label{sec:fss_diffusive}

We will begin by investigating the case where particles are driven by both the velocity field and the diffusive Langevin-like dynamics \eqref{eq:langevin}, that is, the SDE  
\begin{equation}\label{eq:SDE_v_plus_langevin}
    \dif X_t = \left(v_t(X_t) - \nabla U ( X_t)\right) \dt + \sqrt{\frac{2}{\beta(t)}} \dif B_t, \quad X_0 \sim \mu_0.   
\end{equation}
We show that such a process exists, and further, that the law of this process does not diverge from the Gibbs curve $\mu_t$. 
It is instructive to first sketch the argument, postponing some of the technicalities to later.
If it exists, the evolution of the law $\rho_t$ of $X_t$ in \eqref{eq:SDE_v_plus_langevin} is given by the Fokker--Planck equation
\begin{equation}\label{eq:FP_SDE_v_plus_langevin}
    \partial_t \rho_t + \nabla\cdot(v_t\rho_t) + \nabla\cdot(-\nabla U\rho_t) + \frac{1}{\beta(t)}\Delta\rho_t = 0, \quad \rho_0 = \mu_0,
\end{equation}
and we want to show that the Gibbs curve $\mu_t$ is the unique solution. To see that this is the case, recall that $v_t$ and $\mu_t$ jointly satisfy the continuity equation
\[
    \partial_t \mu_t + \nabla\cdot(v_t\mu_t) = 0.
\]
The other two terms, $\nabla\cdot(-\nabla U\rho_t) + \frac{1}{\beta(t)}\Delta\rho_t$, correspond to the $\mu_t$-stationary diffusive dynamics $-\nabla\obj(X_s) \dif s + \sqrt{2\beta(t)^{-1}} \dif B_s $ and hence
\[
    \nabla\cdot(-\nabla U\mu_t) + \frac{1}{\beta(t)}\Delta\mu_t = 0.
\]
Putting this together we find that, at least in a formal sense, the Gibbs curve solves \eqref{eq:FP_SDE_v_plus_langevin}, that is,
\begin{equation*}\label{eq:gibbs_curve_fokker_planck_vsde}
    \partial_t \mu_t + \nabla\cdot(v_t\mu_t) + \nabla\cdot(\nabla U\mu_t) + \frac{1}{\beta(t)}\Delta\mu_t = 0.    
\end{equation*}

We justify the preceding computations via the following two propositions. First, by showing that the SDE \eqref{eq:SDE_v_plus_langevin} permits a solution, \cref{prop:weak_sol_SDE_plus_v} justifies the Fokker--Planck equation \eqref{eq:FP_SDE_v_plus_langevin}. Then, in \cref{prop:gibbs_curve_sol_fp_vsde}, we show that 
this solution is unique and hence necessarily agrees with the Gibbs curve.

\begin{proposition}\label{prop:weak_sol_SDE_plus_v}
    Let $v_t$ be as in \cref{thm:gibbs_ac} and assume that $v_t$ satisfies \cref{ass:v_linear_growth}. Then the stochastic differential equation \eqref{eq:SDE_v_plus_langevin} has a weak solution $(X_t)_{t\geq 0}$.
\end{proposition}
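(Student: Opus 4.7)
The plan is to invoke the superposition principle of Figalli--Trevisan, which upgrades a distributional solution of a Fokker--Planck equation to a weak solution of the associated SDE under mild integrability hypotheses. This route is natural because $v_t = \nabla h_t$ is only known to lie in $L^2(\mu_t)$, so the drift lacks the pointwise regularity required for strong existence via Cauchy--Lipschitz theory.

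First, I would identify the coefficients of \eqref{eq:SDE_v_plus_langevin}: the drift $b_t(x) := v_t(x) - \nabla \obj(x)$ and the diffusion matrix $a_t := 2\beta(t)^{-1} I_{\ddim}$. I would then verify that the Gibbs curve $t \mapsto \mu_t$ is a distributional solution of the associated Fokker--Planck equation, i.e.\ for every $f \in C_c^\infty(\Rdim)$,
\[
\frac{d}{\dt}\int f \dif \mu_t \;=\; \int \langle v_t, \nabla f\rangle \dif \mu_t + \int\bigl(\beta(t)^{-1}\Delta f - \langle \nabla f, \nabla\obj\rangle\bigr) \dif \mu_t.
\]
The first term on the right equals $\int f \,\partial_t\pi_t \dx$ by the weak form \eqref{eq:v_t_weak_form} from \cref{thm:gibbs_ac}, while the second term vanishes because $\mu_t$ is invariant for the frozen-temperature Langevin generator $\cL_{\beta(t)}$; indeed, integrating by parts against $\pi_t \propto e^{-\beta(t)\obj}$ yields $\int \cL_{\beta(t)} f \dif \mu_t = 0$. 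Combining the two identities matches $\partial_t\int f \dif\mu_t$.

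Second, I would verify the integrability conditions required by Trevisan's superposition theorem (see D.\ Trevisan, \emph{Electron.\ J.\ Probab.}\ 2016), which demand that $\int_0^T\int_K (|b_t(x)|+\|a_t\|)\dif\mu_t(x)\dt<\infty$ for every compact $K\subset\Rdim$, together with a linear-growth control sufficient to preserve mass. \cref{ass:v_linear_growth} provides $|v_t(x)|\leq c_1|x|+c_2$; \cref{ass:inf_obj_exists,ass:quadratic_gradient} together with the $C^1$ regularity of $\obj$ yield a local bound on $|\nabla\obj|$ and at most linear growth outside a compact; and \cref{lem:gibbs_in_Pp} gives $\mu_t\in\Psp_p(\Rdim)$ for every $p$. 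Since $\beta\in C^1([0,T])$ with $\beta(t)\geq 1$, the diffusion matrix $a_t$ is constant in $x$ and bounded. These facts make the required integrals finite, uniformly in $t\in[0,T]$.

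With both ingredients in place, the superposition principle produces a probability measure $\PR$ on $C([0,T],\Rdim)$ under which the coordinate process solves the martingale problem associated to the generator $\tfrac12\,a_t:\nabla^2 + \langle b_t,\nabla\cdot\rangle$ with initial law $\mu_0$ and one-dimensional marginals equal to $\mu_t$. Because $a_t$ is constant and strictly positive definite, a Brownian motion may be extracted on an enlarged probability space (standard Stroock--Varadhan representation), yielding a weak solution of \eqref{eq:SDE_v_plus_langevin}. The main obstacle is the Fokker--Planck verification in the first step, specifically the identification of the $v_t$ contribution via \eqref{eq:v_t_weak_form}; once this is done, the growth estimates and the appeal to the superposition theorem are routine given \cref{ass:v_linear_growth} and the Gaussian-tail moment bounds on $\mu_t$.
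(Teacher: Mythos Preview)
Your approach via the Figalli--Trevisan superposition principle is genuinely different from the paper's, and in one respect proves more: if it goes through, you obtain not just a weak solution but one whose time marginals are exactly $\mu_t$, which in the paper is the content of the subsequent \cref{prop:gibbs_curve_sol_fp_vsde}. The paper instead argues directly via the Stroock--Varadhan martingale problem (\cite[Theorem~10.2.2]{stroock1997multidimensional}), checking only the non-explosion condition $\langle x, b(t,x)\rangle \leq C(1+|x|^2)$: \cref{ass:quadratic_gradient} gives $\langle -\nabla\obj(x), x\rangle \leq 0$ for large $|x|$, and \cref{ass:v_linear_growth} gives $\langle v_t(x), x\rangle \leq C(1+|x|^2)$, so the sum is controlled. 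Existence of a martingale solution follows, and the equivalence with weak SDE solutions is quoted from Kallenberg.

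There is, however, a gap in your integrability verification. You assert that \cref{ass:inf_obj_exists,ass:quadratic_gradient} yield ``at most linear growth'' of $\nabla\obj$ outside a compact, but \cref{ass:quadratic_gradient} is a \emph{lower} bound on the radial component $\langle\nabla\obj(x),x\rangle$, not an upper bound on $|\nabla\obj(x)|$; nothing in the standing assumptions controls $|\nabla\obj|$ from above beyond the local boundedness implied by $\obj\in C^1$. Trevisan's standard superposition theorem requires the global integrability $\int_0^T\int_{\Rdim}(|b_t|+\|a_t\|)\dif\mu_t\dt<\infty$, and without a growth bound on $|\nabla\obj|$ the finiteness of $\int|\nabla\obj|\dif\mu_t$ is not immediate. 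Your phrasing in terms of compact sets $K$ plus a ``linear-growth control sufficient to preserve mass'' suggests a localized variant, but you would need to state precisely which version you invoke and why the hypotheses are met. This is exactly where the paper's route is more economical: the Stroock--Varadhan criterion needs only the one-sided bound on $\langle x,b(t,x)\rangle$, which is what \cref{ass:quadratic_gradient} provides, and no integrability of $|\nabla\obj|$ against $\mu_t$ is required.
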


\begin{proof}
    Define formally the operator $\modgen_t$ acting on test functions $f$ by
    \[
        \modgen_tf = \langle v_t, f \rangle + \cL_t f = \langle v_t, f \rangle + \langle - \nabla U, \nabla f \rangle+  \frac{1}{\beta(t)} \Delta f.
    \]
    The \emph{martingale problem} for $\modgen_t$ is well-posed if there, for all $x, t\geq \tau$, exists a unique probability measure $P_x$ on the path space $C([0,T],\Rdim)$ such that $P_x[X_s = x, 0 \leq s \leq \tau] = 1$ and
    \[
        f(X_t) - f(x) - \int_\tau^t (\modgen_s f)(X_s) \dif s 
    \]
    is a martingale.
    \cref{ass:quadratic_gradient} and \cref{ass:v_linear_growth} yield, respectively,
    \[
        \langle -\nabla \obj(x), x\rangle \leq 0, \quad \text{and} \quad \langle v_t(x),x\rangle \leq C(1+|x|^2),
    \]
    for sufficiently large $|x|$. Hence the drift $b(t,x) = -\nabla\obj(x) + v_t(x)$ satisfies the conditions of \cite[{Theorem~10.2.2}]{stroock1997multidimensional}, namely
    \[
        \langle x, b(t,x) \rangle \leq C(1+|x|^2),
    \]
    which guarantees the existence of such a $P_x$. Further, by \cite[{Theorem~32.7}]{kallenberg2021foundations}, this is a weak solution to \eqref{eq:SDE_v_plus_langevin}. (See also \cite[p.~170]{rogers2000diffusions}).
\end{proof}

The above statement tells us that, for any initial point $X_0 = x_0$ there is a stochastic process $(X_t)_{t\geq 0}$ solving the SDE \eqref{eq:SDE_v_plus_langevin}. We now want to ensure that this has the prescribed time marginals. With $(\modgen_t)^*$ the adjoint of $\modgen_t$, we reformulate the Fokker--Planck Equation \eqref{eq:FP_SDE_v_plus_langevin} by
\begin{equation}\label{eq:MFP_v_plus_langevin}
    \partial_t \nu_t = (\modgen_t)^*\nu_t, \quad \nu_0 = \mu_0,
\end{equation}
by which we mean that a family of measures $(\nu_t)$ on $\Rdim$ solves \eqref{eq:MFP_v_plus_langevin} if and only if, for every $t$,
\begin{equation}\label{eq:weak_form_kolmogorov_forward}
    \int_\Rdim f(t,x) \difempty\nu_t(\dx) - \int_\Rdim f(0, x) \difempty\mu_0(\dx) = \int_0^t \int_{\Rdim} \left( \partial_s f(s,x) + \modgen_s f(s,x) \right)  \difempty\nu_s(\dx) \dif s
\end{equation}
for all \( f \in C_b^{1,2}([0,T] \times \Rdim) \) where $C_b^{1,2}([0,T] \times \Rdim)
:= \{ f(t,x) \colon [0,T]\times \Rdim \to \RR \mid f, \partial_t f, \partial_i f, \partial_i\partial_j f \in C_b([0,T] \times \Rdim)\}.$
Note that, by construction, the Gibbs curve $\mu_t$ solves \eqref{eq:MFP_v_plus_langevin}.

\begin{proposition}\label{prop:gibbs_curve_sol_fp_vsde}
    Let $(X_t)_{t\geq 0}$ be a weak solution to \eqref{eq:SDE_v_plus_langevin} with $X_0 \sim \mu_0$. Then
    \[
        \textnormal{Law}(X_t) = \mu_t \quad \text{for almost every } t \in [0,T].
    \]
\end{proposition}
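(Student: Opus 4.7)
The strategy is to show that both the time marginals $\rho_t := \mathrm{Law}(X_t)$ and the Gibbs curve $\mu_t$ are weak solutions of the Fokker--Planck equation \eqref{eq:MFP_v_plus_langevin} starting from $\mu_0$, and then to invoke a uniqueness result for that equation. That $\rho_t$ satisfies the weak form \eqref{eq:weak_form_kolmogorov_forward} is standard: apply It\^o's formula to $f(s, X_s)$ for $f \in C^{1,2}_b([0,T] \times \Rdim)$, take expectation, and use the linear growth of the drift $v_t - \nabla \obj$ (\cref{ass:v_linear_growth}, together with the tail growth of $\nabla\obj$ implied by \cref{ass:quadratic_gradient}) to derive a Gr\"onwall-type estimate guaranteeing $\sup_{s \leq T}\EE[|X_s|^2] < \infty$. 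This second-moment bound is what justifies that the stochastic integral arising in It\^o's formula has zero expectation, which delivers \eqref{eq:weak_form_kolmogorov_forward} with $\nu_s = \rho_s$.

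\textbf{Verifying the Gibbs curve.} For $\mu_t$, the two halves of $\modgen_t = \langle v_t, \nabla \cdot\rangle + \cL_t$ split cleanly. The velocity-field part is precisely the continuity equation \eqref{eq:v_t_weak_form} established in \cref{thm:gibbs_ac}. The Langevin part vanishes by construction: $\mu_t$ is the invariant measure of the diffusion with generator $\cL_t$ at frozen temperature, so integration by parts against $\pi_t \propto e^{-\beta(t)\obj}$ gives $\int \cL_t f \dif \mu_t = 0$ for every $f \in C^\infty_c(\Rdim)$. Adding the two identities, letting $f$ depend on time, and integrating over $[0,t]$ yields \eqref{eq:weak_form_kolmogorov_forward} with $\nu_s = \mu_s$.

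\textbf{The main obstacle: uniqueness.} Since $v_t$ is only a gradient of an $H^1(\mu_t)$ function, we have no a priori Lipschitz control on the drift, so classical pathwise uniqueness for SDEs is unavailable. What rescues us is uniform ellipticity of $\modgen_t$: since $\beta$ is continuous on $[0,T]$ with $\beta(T) < \infty$, the diffusion coefficient $\sqrt{2/\beta(t)}$ is bounded below on $[0,T]$. Combined with the linear growth of the drift and the a priori second-moment bound above, this places us in a setting where uniqueness of weak solutions to nondegenerate Fokker--Planck equations holds in the class of curves of probability measures with uniformly bounded second moments on $[0,T]$. Such uniqueness can be obtained via the superposition principle of Ambrosio--Trevisan combined with well-posedness of the martingale problem for $\modgen_t$, or directly from the Bogachev--R\"ockner--Shaposhnikov theory for nondegenerate parabolic Fokker--Planck equations. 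Either route forces $\rho_t = \mu_t$ for almost every $t \in [0,T]$, as desired.
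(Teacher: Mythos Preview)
Your approach is essentially the same as the paper's: show both $\rho_t$ and $\mu_t$ solve the weak Fokker--Planck equation \eqref{eq:weak_form_kolmogorov_forward}, then appeal to the Bogachev--R\"ockner--Shaposhnikov uniqueness theory (the paper cites \cite[Theorem~9.3.6]{bogachev2022fokker}, checking the condition $v\in L^1(\mu_t(\dx)\dt)$ via $\|v_t\|_{L^2(\mu_t)}\in L^1([0,T])$).

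Two small remarks. First, the full drift $b=v_t-\nabla\obj$ does \emph{not} have linear growth under the stated assumptions (e.g.\ $\obj(x)=|x|^4$ gives cubic $\nabla\obj$); what holds, and what suffices for the Gr\"onwall second-moment bound, is the one-sided estimate $\langle x,b(t,x)\rangle\le C(1+|x|^2)$ used in \cref{prop:weak_sol_SDE_plus_v}. Second, the Gr\"onwall detour is unnecessary: since $f\in C_b^{1,2}$, $\nabla_x f$ is bounded, so $\EE\int_0^T|\nabla_x f(s,X_s)|^2\beta(s)^{-1}\dif s<\infty$ holds trivially and the stochastic integral is a true martingale without any moment control on $X$. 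The paper takes this shorter route.
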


\begin{proof}
    By \cite[{Theorem~32.10}]{kallenberg2021foundations} the measure $P_{\mu_0} = \int P_x \mu_0(\dx)$ exists and is unique, and hence the law of $X_t$ with $X_0\sim\mu_0$ is unique. Write $\rho_t := \text{Law}(X_t)$ and denote by \( b(t,x) = -\nabla\obj(x) + v_t(x)\).

    Let $f \in C_b^{1,2}([0,T] \times \Rdim)$. By It\^o's formula
    \begin{align*}
        \dif f(t, X_t) &= \partial_t f(t,X_t) \dt + \langle\nabla_x f(t,X_t), \dif X_t\rangle + \frac{1}{\beta(t)}\Delta_x f(t,X_t) \dt \\
            &= \partial_t f(t,X_t) \dt + \langle\nabla_x f(t,X_t) ,b(t,X_t)\rangle \dt + \frac{1}{\beta(t)} \nabla_x \cdot f(t,X_t) \dif B_t + \frac{1}{\beta(t)} \Delta_x f(t,X_t) \dt.
    \end{align*}

    Since $\partial_i f$ is bounded, $\EE \left[ \int_0^T |\nabla_x \cdot f(t,X_t) \beta^{-1}(t)|^2 \dt \right] < \infty$, so $\EE \left[ \int_0^t \beta^{-1}(s)\nabla_x \cdot f(s,X_s)  \dif B_s \right] = 0$. Now
    \begin{align*}
        \EE[f(t,X_t)] - \EE[f(0,X_0)] &= \EE \left[\int_0^t \partial_s f(s,X_s) \dif s\right] + 
            \EE \left[\int_0^t \langle \nabla_x f(s,X_s),  b(s,X_s)\rangle \dif s \right] \\
            &+ \EE \left[\int_0^t \frac{1}{\beta(s)}\Delta_x f(s,X_s) \dif s \right].
    \end{align*}
    Identifying that $\EE[f(t,X_t)] = \int f(t,x)\rho_t(\dx)$, and similarly for the other terms, we find that
    \[
        \int_\Rdim f(t,x) \rho_t(\dx) - \int_\Rdim f(0, x) \mu_0(\dx) = \int_0^t \int_{\Rdim} \left( \partial_s f(s,x)  +  \modgen_s f(s,x) \right) \rho_s(\dx) \dif s, 
    \]
    and in fact, by \cite[Theorem~9.3.6]{bogachev2022fokker} the solution $\rho = \rho_t(\dx) \dt$ is unique,%
    \footnote{This is verified by noting that $v_\cdot \in L^1(\mu_t(\dx) \dt, \Rdim \times (0,T))$, since we have already shown that $v$ is more regular, $\|v_t\|_{L^2(\mu_t)} \in L^1(\dt, [0,T])$.} %
    and hence we conclude that the Gibbs space-time measure agrees with the law of the process, i.e., $\mu = \mu_t(\dx)\dt = \text{Law}(X_t)\dt$.
\end{proof}

\subsection{Superposition with other stochastic dynamics}

As alluded to in the introduction, we now revisit the discussion at the beginning of the previous section, in light of the Kolmogorov forward equation \eqref{eq:MFP_v_plus_langevin}. Starting this time from the continuity equation,
\[
    \partial_t \mu_t  = - \nabla \cdot(v_t\mu_t),
\]
we have seen that with $\cL_t$ denoting the diffusion generator, also
\begin{equation}
    \partial_t \mu_t =  - \nabla \cdot(v_t\mu_t) + \cL_t^*\mu_t.
\end{equation}
At least formally, this is clear since $\cL_t$ preserves $\mu_t$ in the sense that, for each fixed $t$, $\cL_t^* \mu_t = 0$. In other words, we can think of the family $(\cL_t)$ as a homogeneous solution to the continuity equation. Additionally, we saw that there indeed is a Markov process $(X_t)$ with generator $\gen^o_tf = \langle v_t, \nabla f\rangle + \cL_tf $. It is tempting to then consider 
\[
    \modgen_tf = \langle v_t,\nabla f\rangle + \gen_tf
\]
where, for each $t$, $\gen_t$ is the generator \emph{any} $\mu_t$-stationary stochastic process, i.e.~satisfying $\gen_t^* \mu_t = 0$, since then formally
\begin{align}\label{eq:transport_kolmogorov_equation}
    \partial_t\mu_t &= -\nabla\cdot (v_t\mu_t) + \cA^*_t\mu_t.    
\end{align}
However, it is in general not trivial to show that the right-hand side of \eqref{eq:transport_kolmogorov_equation} defines a generator for a Markov process. Nevertheless, we next give another example of where this is indeed the case.

\subsubsection{Controlled piecewise deterministic simulated annealing}\label{sec:fss_pdmp}

We will now turn our attention to a case of non-diffusive stochastic dynamics. 
The Bouncy Particle Sampler (BPS) is a rejection-free Monte Carlo method introduced in \cite{peters2012rejection}; see also \cite{bouchard2018bouncy}. Samples from a distribution of interest, \( \mu(\dx) = \dens(x) \dx \propto \exp(-\obj(x)) \dx, \)
are produced by simulating a particular \emph{piecewise deterministic Markov process} (PDMP). For an in-depth treatment on PDMPs, see \cite{davis1993markov}. If $\dens$ is defined on $\Rdim$, then the BPS
process $(Z_t)_{t\geq 0} = ((X_t,\bpsvel_t))_{t \geq 0}$ takes values in an extended state space $\Rdim\times S^{\ddim-1}$, where $S^{\ddim-1} \subset \Rdim$ denotes the unit sphere. We think of $X_t$ as a position process, and $Y_t$ as a velocity process. In between events, the position $X_t$ follows linear trajectories in the piecewise constant velocity $Y_t$,
\begin{align*}
    \frac{\dif}{\dt} \begin{pmatrix}
        X_t \\
        Y_t
    \end{pmatrix} =  \begin{pmatrix}
        \bpsvel_t \\
        0
    \end{pmatrix}.
\end{align*}
Clearly, this ODE is solved by the smooth map $\xi_t(x,y) = (x + ty, y)$.
At reflection events, arriving according to an inhomogeneous Poisson process with the instantaneous rate given by
\begin{equation}
    \lambda(X_t, \bpsvel_t) = \max\left\{0, \langle \bpsvel_t, \nabla \obj(X_t) \rangle \right\},
\end{equation}
the velocity changes deterministically, given by a specular reflection against the level set of $\obj$ in the current point. That is, for a process in state $X_t$ and with velocity $\bpsvel_t$ at a jump time, the velocity is changed to
\begin{equation}\label{eq:velocity_reflection}
    \bpsvel'_t = R(X_t)\bpsvel_t, \quad \text{where} \, R(x) := I - 2\frac{{\nabla\obj(x)}{\nabla\obj(x)}^T}{|{\nabla\obj(x)}|^2}
\end{equation}
and $R(x) = I$ when the latter is not defined.
With $\psi(\dy)$ the uniform measure on $S^{\ddim - 1}$, this process permits $\pi(x)\dx\difempty\psi(\dy)$ as a stationary measure. However, this measure might in general not be the only stationary measure if one does not impose additional velocity refreshment events \cite{bouchard2018bouncy}. At these events, the velocity $Y_t$ is resampled from the velocity distribution. To achieve this, an additional jump kernel, with a constant rate of events $\lambda_R$, is superimposed. The generator of the process is then given by
\begin{align*}
    \gen f(x,y) & = \langle \nabla_xf(x,y), y\rangle                         \\
                  & \phantom{+}+ \lambda(x,y)\left(f(x,R(x)y) - f(x,y)\right) \\
                  & \phantom{+}+ \lambda_R \int (f(x,y')-f(x,y))\psi(\dif y'),
\end{align*}
satisfying 
\begin{equation}\label{eq:bps_invariance}
    \iint \gen f(x,y) \exp(-\obj(x))\dx\difempty\psi(\dy) = 0
\end{equation}
for a suitable class of test functions.

\paragraph{Piecewise deterministic simulated annealing}
In \cite{monmarche2016piecewise} the BPS process is used as the basis for the simulated annealing process discussed in the previous section.
Here, a theoretical analysis of the maximal cooling rate such that the law of the $X_t$-marginal converges to $\mu_\infty$ reveals that this process, as in the diffusion case, requires a logarithmic cooling schedule for general $\obj$. For a cooling schedule $\beta$, the time-inhomogeneous BPS
is the Feller process on $\Rdim \times S^{\ddim-1}$ with generator given by
\begin{align*}
    \gen_t f(x,y) & = \langle \nabla_xf(x,y), y\rangle                         \\
                  & \phantom{+}+ \beta(t)\lambda(x,y)\left(f(x,R(x)y) - f(x,y)\right) \\
                  & \phantom{+}+ \lambda_R(t) \int (f(x,y')-f(x,y))\psi(\dif y')
\end{align*}
where $\psi = \text{Unif}(S^{d-1})$ and $\lambda_R(t)\geq 0$ is some possibly time-dependent refreshment rate. By \eqref{eq:bps_invariance}, $\gen_t$ is $\mu_t$-invariant.

\newcommand{\refrate}{\lambda_\textrm{R}}

\paragraph{Controlled piecewise deterministic simulated annealing} We consider now the modified BPS on $(\Rdim, S^{\ddim-1})$ driven by the deterministic dynamics
\begin{align*}
    \frac{\dif}{\dt} \begin{pmatrix}
        X_t \\
        Y_t
    \end{pmatrix} =  \begin{pmatrix}
        \bpsvel_t + v_t(X_t) \\
        0
    \end{pmatrix}.
\end{align*}
To specify a PDMP with these characteristics, we need to be able to integrate these dynamics. To this end, we add an additional regularity assumption on the velocity field $v_t$.
\begin{assumption}\label{ass:lipschitz_v}
    Assume $v_t(x)$ is continuous in time and Lipschitz in space, in the sense that there exists an $L > 0$, independent of $t$, such that
    \[
        |v_t(x)-v_t(y)| \leq L|x-y| \quad \forall x,y,t.
    \]
\end{assumption}
The preceding assumption, strictly stronger than \cref{ass:v_linear_growth}, guarantees that, for all $s,x,y$, the initial value problem
\begin{equation}\label{eq:inhom_v_flow}
    \frac{\dif{}}{\dt}\zeta^s_t(x,y) = v_t(\zeta^s_t(x,y)) + y, \quad \zeta^s_0(x,y) = x
\end{equation}
has a global (in time) solution $\zeta$. For suitable functions $f$ in the domain of the generator $\modgen_t$ of this process at time $t$, we have that
\[
    \gen^o_tf(x,y) = \langle\nabla_x f(x,y), v_t(x)\rangle + \gen_t f(x,y),
\]
and as $\gen_t$ preserves $\mu_t$, we find that
\begin{equation}\label{eq:modpdmp_solves_fp}
    \iint \modgen_tf(x,y) \pi_t(x)\dx \difempty\psi(\dy) = \iint f(x,y) \partial_t \pi_t(x)\dx \difempty\psi(\dy).
\end{equation}

Under the conditions introduced so far, the following statement establishes results analogous to those in \cref{sec:fss_diffusive}.
\begin{proposition}\label{prop:mbps_works}
    Assume that $\obj$ satisfies \cref{ass:inf_obj_exists,ass:quadratic_tail,ass:quadratic_gradient}. Let $v_t$ be as in \cref{thm:gibbs_ac} and assume that it further satisfies \cref{ass:lipschitz_v}. Then there exists a PDMP $Z_t = (X_t,Y_t)$, taking values in $\Rdim \times S^{\ddim -1}$, with extended generator $\cA^o_t$ such that, with $Z_0 \sim \pi_0(x) \dx \difempty\psi(\dy)$, it holds that
    \[
        \textnormal{Law}(Z_t) = \pi_t(x)\dx\difempty\psi(\dy),%
    \]
    where $\psi(\dy)$ is the uniform measure on the sphere $S^{\ddim -1}$.
    
\end{proposition}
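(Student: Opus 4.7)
The plan is to mirror the strategy used in the diffusive case (\cref{prop:weak_sol_SDE_plus_v,prop:gibbs_curve_sol_fp_vsde}): first construct the PDMP, then identify its time marginals with the Gibbs curve via the weak forward equation \eqref{eq:modpdmp_solves_fp}.

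For the construction, \cref{ass:lipschitz_v} guarantees via Picard--Lindel\"of that the deterministic flow $\zeta^s_t$ defined by \eqref{eq:inhom_v_flow} exists globally and depends continuously on initial data, so the between-jumps dynamics are well-posed. Following Davis \cite{davis1993markov}, I would then build $(Z_t)_{t \geq 0} = ((X_t, Y_t))_{t \geq 0}$ by interlacing the flow $\zeta$ with jumps arriving from an inhomogeneous Poisson process of total rate $\beta(t)\lambda(X_t, Y_t) + \refrate(t)$: at a jump the velocity is either reflected via $R(X_t)$ or resampled from $\psi$ with probabilities proportional to the two rates. Non-explosion of jumps on $[0,T]$ follows because Gr\"onwall applied to \eqref{eq:inhom_v_flow} keeps $|X_t|$ bounded on compacts (using linear growth from \cref{ass:v_linear_growth}, which \cref{ass:lipschitz_v} implies), whence $\lambda(X_t, Y_t)$ is locally bounded. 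Standard PDMP theory then identifies the extended generator on a suitable domain (containing $C^{1,0}_b$ test functions) as $\gen^o_t f = \langle \nabla_x f, v_t + y \rangle + \gen_t f$, where $\gen_t$ is the BPS generator introduced in \cref{sec:fss_pdmp}.

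To match the marginals, apply Dynkin's formula to $f \in C^{1,0}_b([0,T] \times \Rdim \times S^{\ddim-1})$: the process
\[
    f(t, Z_t) - f(0, Z_0) - \int_0^t \left( \partial_s f(s, Z_s) + \gen^o_s f(s, Z_s) \right) \dif s
\]
is a local martingale, and the local boundedness of the rates combined with non-explosion lets us take expectations. This shows that $\rho_t := \textnormal{Law}(Z_t)$ solves the weak Kolmogorov forward equation
\[
    \int f(t,z) \dif \rho_t - \int f(0,z) \dif \rho_0 = \int_0^t \int \left( \partial_s f + \gen^o_s f \right) \dif \rho_s \dif s,
\]
with $\rho_0 = \pi_0(x)\dx\,\psi(\dy)$. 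On the other hand, \eqref{eq:modpdmp_solves_fp} combined with $\iint \partial_s f \pi_s \dx\,\psi(\dy) + \iint f \partial_s \pi_s \dx\,\psi(\dy) = \frac{\dif}{\dif s} \iint f \pi_s \dx\,\psi(\dy)$ shows that the candidate $\tilde\rho_t(\dx,\dy) := \pi_t(x)\dx\,\psi(\dy)$ satisfies the same weak equation with the same initial datum.

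It remains to conclude $\rho_t = \tilde\rho_t$ from uniqueness, and this is the main technical obstacle: unlike the diffusion case, we cannot invoke \cite{bogachev2022fokker} directly, since the forward equation is a non-local integro-differential equation rather than a second-order PDE. My plan is to establish uniqueness through the well-posedness of the martingale problem for $\gen^o_t$: non-explosion of jumps (already verified) together with continuity of $(t,x,y) \mapsto v_t(x) + y$ and of the jump rates and kernels ensures that the law $P_{\mu_0 \otimes \psi}$ of the constructed process is the unique solution to the martingale problem with this initial distribution (see, e.g., the PDMP uniqueness results building on \cite[Chapter~4]{ethier2009markov} and the specific treatment in \cite{davis1993markov}). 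Since both $\rho_t$ and $\tilde\rho_t$ arise as one-dimensional time marginals of solutions to this martingale problem with the same initial condition, they must agree for almost every $t \in [0,T]$, concluding the proof.
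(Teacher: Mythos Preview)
Your construction via Davis and the Dynkin step are sound, and you correctly isolate uniqueness as the crux. The gap is in your last paragraph: you assert that both $\rho_t$ and $\tilde\rho_t$ ``arise as one-dimensional time marginals of solutions to this martingale problem,'' but for $\tilde\rho_t = \pi_t(x)\dx\,\psi(\dy)$ you have only shown, via \eqref{eq:modpdmp_solves_fp}, that it satisfies the \emph{weak forward equation}. Well-posedness of the martingale problem gives uniqueness of the path-space law; it does \emph{not} by itself imply that every curve of measures solving the forward equation for $\modgen_t$ is the time-marginal family of some martingale-problem solution. Closing that implication requires a superposition principle for PDMPs with non-local generators, which is not standard and not supplied by the references you cite. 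As written, the argument is circular at the point where you promote $\tilde\rho_t$ from ``forward-equation solution'' to ``martingale-problem marginals.''

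The paper circumvents this by a different mechanism. It homogenizes the dynamics, adjoining time as a coordinate and considering the space-time PDMP $\cZ_t = (t, X_t, Y_t)$ on $\cM := [0,T] \times \Rdim \times S^{\ddim-1}$ with homogeneous generator $\fA = \partial_t + \modgen_t$. It then invokes results of \cite{holderrieth2021cores} to show that $\cZ$ is Feller and that $C^\infty_c(\cM)$ is a \emph{core} for $\fA$. The Feller verification there needs the flow not to collapse from infinity into a bounded set, i.e.\ $\lim_{|x_s|\to\infty}|x_t| = \infty$ for $t \ge s$; this is obtained from the Lipschitz assumption via a Gr\"onwall \emph{lower} bound on $|x_t|$, which is the opposite direction from the upper bound you use for non-explosion. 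Once a core is available, the weak forward equation for $\fA$ has a unique probability-measure-valued solution (any solution tested against the core extends by density to the full domain, and the Feller semigroup then pins it down), so the equality $\rho_t = \pi_t(x)\dx\,\psi(\dy)$ follows directly from both curves satisfying it with the same initial datum. Your strategy can be repaired by importing this Feller/core machinery in place of the bare martingale-problem well-posedness claim.
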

 
\begin{proof}
    Adding time as a coordinate, we introduce a homogeneous space-time PDMP $\cZ_t = (t, X_t, \bpsvel_t)$ taking values in $\cM := [0,T] \times \Rdim \times S^{\ddim - 1}$. This process follows the homogeneous flow
    \[
        \xi_t(s,X_s,Y_s) = (s+t, \zeta_t^s(x,y), 0), \quad \text{for $s,t \geq 0, s+t < T$}
    \]
    until it reaches the boundary $s=T$, at which point it stops. By \cref{ass:lipschitz_v}, $\xi_t$ is as regular as need be. From \cite[Proposition~4.2]{holderrieth2021cores}, we can verify that this process is Feller by showing that the flow $\zeta$ induced by $v_t$ cannot bring the process arbitrarily fast toward the origin. Specifically, we need for every $t \geq s$ and $y$ that
    \[
        \lim_{|x_s| \to \infty} |x_{t}| = \infty.
    \]
    Since $v_t$ is Lipschitz, there exists constants $c_0, c_1$, such that, for all $t$,
    \[
        \frac{d}{\dt}|x_t| \geq - |v_t(x_t)| \geq -(c_0 + c_1 |x_t|),
    \]
    whereby Gr\"onwall's inequality yields the estimate
    \[
        |x_t| \geq |x_s|e^{-c_1 (t-s)} + \frac{c_0}{c_1}(e^{-c_1(t-s)} - 1) \to \infty  \quad \textnormal{ as } \quad |x_s| \to \infty.
    \]
    Hence the space-time process $\cZ$ is Feller and the space-time generator $\fA = \partial_t + \modgen_t$ permits $\cD = C^\infty_c(\cM)$ as a core \cite[Theorem~5.3]{holderrieth2021cores}. Let $\mu_0(\dx)\psi(\dy)$ be the initial law of $(X_0,Y_0)$, which corresponds to $\cZ_0 \sim \delta_0(\dt)\mu_0(\dx)\psi(\dy)$. Since $\cZ$ evolves purely deterministically in the time coordinate, it follows that then $\cZ_t \sim \delta_t(\dt)\rho_t(\dx,\dy)$ for $t \leq T$, and we define $\textnormal{Law}(X_t,Y_t)=\rho_t$.
    For any $f\in\cD$ we find now by Dynkin's formula \cite[Theorem~31.3]{davis1993markov} that, for any $0 \leq s \leq t \leq T$,
    \begin{align*}
        \EE[f(\cZ_t)] - \EE[f(\cZ_0)] &= \EE\left[\int_0^t \fA f(\cZ_s)\dif s\right] \iff \\
        \int f(t,\cdot) \dif\rho_t - \int f(0,\cdot) \dif\rho_0 
        &= \int_0^t \left(\int_{\Rdim \times S^{\ddim -1}}\frac{\partial f}{\partial s} + \modgen_s f \dif\rho_s\right) \dif s \\
        &= \int_0^t \left( \int_{\Rdim\times S^{\ddim -1}} \frac{\partial f}{\partial s} + \langle v_s, \nabla_x f\rangle \dif\rho_s \right)\dif s. \\
    \end{align*}
    The above implies that the law of $X_t$ satisfies the continuity equation \( \nabla \cdot(v_t\mu_t) + \partial_t \mu_t = 0 \). Additionally, the PDMP preserves the velocity distribution for any $t < T$ and the statement follows.
\end{proof}

\subsection{Velocity field and diffusion on \texorpdfstring{$\RR$}{R}}\label{sec:one_dim_case}

In this section, we briefly consider the special case of a one-dimensional potential $\obj$. In this case, we can both show absolute continuity of the curve, and the existence of a diffusion process controlled by the velocity field, under weaker conditions. More specifically, we assume that $\obj$ in this setting satisfies \cref{ass:inf_obj_exists,ass:quadratic_tail}, but we replace \cref{ass:quadratic_gradient} with the following weaker condition:
\begin{assumption}\label{ass:1d_gradient_condition}
    There exists an $R$ and $\alpha > 0$ such that 
    \[
        |x| > R \implies x\left( \beta_0 \obj'(x) - \frac{\obj'(x)}{\obj(x)}\right) \geq \alpha |x|,
    \]
    where $\beta_0$ is the lowest inverse temperature along the curve.
\end{assumption}
For $\obj\colon \RR \to \RR$ the continuity equation \eqref{eq:continuity_equation} reads
\[
    - \frac{\dif }{\dx} \left( v_t \pi_t \right) = \partial_t \pi_t,
\]
which is readily seen to be solved by
\begin{equation}\label{eq:1d_v}
    v_t(x) = - \frac{1}{\pi_t(x)} \int_{-\infty}^x \partial_t \pi_t(y) \dy = \frac{\beta'(t)}{\pi_t(x)} \int_{-\infty}^x \left( \obj (y) - \int\obj\dif\mu_t\right) \pi_t(y) \dy,
\end{equation}
and the main result in this section amounts to verifying that this $v_t$ is sufficiently regular.

\begin{restatable}{theorem}{thmonedimac}\label{thm:1d_ac}
    Assume that $\obj\colon \RR \to \RR$ satisfies Assumption \ref{ass:inf_obj_exists}, \ref{ass:quadratic_tail} and \ref{ass:1d_gradient_condition}. Let $\beta\colon [0,T] \to [1,\infty)$ be a non-decreasing smooth cooling schedule. Then, for any $1 \leq p < \infty$, $v_t$ defined as in \eqref{eq:1d_v} satisfies $\|v_t\|_{L^p(\mu_t)} \in L^1([0,T])$
    and the curve
    \begin{align*}
        [0,T] \ni t \mapsto \mu_t \in \sP_p(\RR)        
    \end{align*}
    is absolutely continuous.
\end{restatable}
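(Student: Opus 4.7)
The plan is to prove the integrability claim $\|v_t\|_{L^p(\mu_t)} \in L^1([0,T])$ directly, after which absolute continuity of the curve follows from the converse direction of \cref{thm:ac_curves_and_velocity_fields}, since \cref{lem:gibbs_in_Pp} gives $\mu_t \in \sP_p(\RR)$ throughout. The formula for $v_t$ is set up so that $-\partial_x(v_t \pi_t) = \partial_t \pi_t$ holds classically, with boundary terms at $\pm\infty$ vanishing because $\int(\obj - \int\obj\dif\mu_t)\dif\mu_t = 0$.

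Writing $\bar{\obj}_t := \int \obj \dif\mu_t$ and $F_t(x) := \int_{-\infty}^x(\obj(y) - \bar{\obj}_t)\pi_t(y)\dy$, so that $|v_t(x)| = |\beta'(t)||F_t(x)|/\pi_t(x)$, I would split the analysis by region. On the compact interval $[-R,R]$, the crude envelope $|F_t(x)| \leq 2\bar{\obj}_t$ (valid since $\obj \geq 0$) combined with a uniform positive lower bound on $\pi_t$ over $[-R,R]\times[0,T]$ (from continuity of $\obj$ and boundedness of $\beta$) gives a uniform bound on $|v_t|$ there. For $x > R$, I would rewrite $F_t(x) = -\int_x^\infty(\obj(y) - \bar{\obj}_t)\pi_t(y)\dy$, use $\pi_t(y)/\pi_t(x) = e^{-\beta(t)(\obj(y)-\obj(x))}$, and bound
\[
    \frac{|F_t(x)|}{\pi_t(x)} \leq \int_x^\infty (\obj(y) + \bar{\obj}_t)\, e^{-\beta(t)(\obj(y) - \obj(x))} \dy.
\]
Here is where \cref{ass:1d_gradient_condition} enters: for $y > R$ with $\obj(y)$ large enough that $\beta_0 - 1/\obj(y) > 0$, the assumption forces $\obj'(y) \geq \alpha/\beta_0 > 0$. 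This legitimizes the change of variable $u = \beta(t)(\obj(y) - \obj(x))$, so $\dif u = \beta(t)\obj'(y)\dif y \geq (\alpha\beta(t)/\beta_0)\dy$, turning the right-hand side into an exponential integral bounded by $(\beta_0/(\alpha\beta(t)))(\obj(x) + \bar{\obj}_t + 1/\beta(t))$. The case $x < -R$ is handled symmetrically.

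Combining the three regimes yields $|v_t(x)| \leq |\beta'(t)|\, C_t\,(1 + \obj(x))$ with $C_t$ continuous in $t \in [0,T]$, and hence
\[
    \|v_t\|_{L^p(\mu_t)}^p \leq C'_t\, |\beta'(t)|^p \int(1 + \obj)^p \dif\mu_t.
\]
The Gaussian tails from \cref{ass:quadratic_tail} ensure $\obj$ has all $\mu_t$-moments with bounds continuous in $\beta(t)$, so the right-hand side is bounded uniformly on $[0,T]$ and a fortiori in $L^1([0,T])$. The main obstacle is the tail estimate above: the ratio $F_t/\pi_t$ is a cancellation of two exponentially small quantities, and the role of \cref{ass:1d_gradient_condition} is precisely to supply the one-sided gradient lower bound needed for the change of variables to go through, yielding a bound that is merely polynomial (rather than exponential) in $\obj(x)$. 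The $\obj'/\obj$ correction term in the assumption is what keeps the resulting lower bound on $\obj'$ anchored to the fixed $\beta_0$ instead of the running $\beta(t)$.
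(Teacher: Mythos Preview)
Your proof is correct and follows the same overall architecture as the paper: split $\RR$ into $[-R,R]$ and the tails, obtain a bound of the form $|v_t(x)| \lesssim |\beta'(t)|(1+\obj(x))$ on the tails, handle the compact piece by continuity, and conclude via the $\mu_t$-moments of $\obj$ (which are finite and continuous in $t$ by the Gaussian tails from \cref{ass:quadratic_tail}).

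The one substantive difference is how the tail bound is obtained. You extract from \cref{ass:1d_gradient_condition} the consequence $\obj'(y) \geq \alpha/\beta_0$ for $y > R$ and perform the change of variables $u = \beta(t)(\obj(y)-\obj(x))$, reducing the integral to a Laplace-type computation. The paper instead packages the tail estimate into a separate lemma using the exact identity
\[
    \frac{\dif{}}{\dif x}\bigl(\obj\, e^{-\beta \obj}\bigr) = \Bigl(-\beta\,\obj' + \frac{\obj'}{\obj}\Bigr)\obj\, e^{-\beta \obj},
\]
so that \cref{ass:1d_gradient_condition} directly controls the sign of the bracketed factor and yields $\int_x^\infty \obj\,\pi_t \leq \alpha^{-1}\obj(x)\pi_t(x)$ without any substitution. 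This explains the otherwise odd shape of \cref{ass:1d_gradient_condition}: the combination $\beta_0\obj' - \obj'/\obj$ is precisely the logarithmic derivative of $\obj\,e^{-\beta_0\obj}$, not merely a device to pin down a lower bound on $\obj'$ as you suggest. Your route is arguably more elementary and shows that a bare one-sided gradient bound already suffices; the paper's route uses the assumption more sharply and gives the cleaner constant $|v_t(x)| \leq |\beta'(t)|\,\obj(x)$ in the tails.
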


We give an elementary proof of the preceding theorem in \cref{app:proofs_1d}. In this, we note that $\int_{-\infty}^x \left( \obj (y) - \int\obj\dif\mu_t\right) \pi_t(y) \dy \to 0$ when $|x| \to \infty$, and hence the assumptions on $\obj$ forces $v_t$ to point towards the origin in the tails. To conclude this section, we note that this control on the sign of $v_t$ for large $|x|$ additionally allows us to drop the linear growth assumption used in \cref{sec:fss_diffusive}, yielding a stronger result as follows.

\begin{proposition}
    Assume that $\obj$ and $\beta$ are as in Theorem \ref{thm:1d_ac} and let $v_t$ be as in \eqref{eq:1d_v}. Then the stochastic differential equation on $\RR$ given by
    \[
        \dif X_t = \left( v_t(X_t) - \nabla \obj(X_t) \right) \dt + \sqrt{\frac{2}{\beta(t)}} \dif B_t
    \]
    has a weak solution, and with $X_0 \sim \mu_0$ we have that \( \textnormal{Law}(X_t) = \mu_t \) for almost every $t \in [0,T]$.
\end{proposition}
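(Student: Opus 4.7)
The plan is to mirror the arguments of \cref{prop:weak_sol_SDE_plus_v} and \cref{prop:gibbs_curve_sol_fp_vsde}, with the one-dimensional sign information on $v_t$ taking the role previously played by the linear-growth \cref{ass:v_linear_growth}.

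First, I would establish existence of a weak solution via the martingale problem for the operator
\[
    \modgen_t f(x) = (v_t(x) - U'(x))\, f'(x) + \beta(t)^{-1} f''(x).
\]
The critical hypothesis for \cite[Theorem~10.2.2]{stroock1997multidimensional} is the estimate $\langle x, b(t,x)\rangle \leq C(1+|x|^2)$ on the drift $b(t,x) := v_t(x) - U'(x)$. \cref{ass:1d_gradient_condition} forces $xU'(x) \geq 0$ for $|x|$ large enough that $\beta_0 - 1/U(x) > 0$, so that $-xU'(x) \leq 0$ in the tails. For the velocity term, I would carry out a sign analysis of the antiderivative appearing in \eqref{eq:1d_v}: since $\int_\RR (U - \int U \dif \mu_t)\pi_t \dy = 0$ and $(U - \int U \dif \mu_t)\pi_t$ is nonnegative outside some compact set (as $U(y) \to \infty$ by \cref{ass:quadratic_tail}), a monotonicity argument in the tails forces $x \mapsto \int_{-\infty}^x (U - \int U \dif \mu_t)\pi_t \dy$ to be $\geq 0$ for $x$ sufficiently negative and $\leq 0$ for $x$ sufficiently positive. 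Combined with $\beta'(t) \geq 0$, this yields $x v_t(x) \leq 0$ for $|x|$ large. The two estimates then give $\langle x, b(t,x)\rangle \leq 0$ for $|x|$ sufficiently large, which is a fortiori the hypothesis required; \cite[Theorem~10.2.2]{stroock1997multidimensional} supplies a solution to the martingale problem, and \cite[Theorem~32.7]{kallenberg2021foundations} upgrades it to a weak solution of the SDE.

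Second, to identify the law of $(X_t)$ with $\mu_t$, I would argue exactly as in \cref{prop:gibbs_curve_sol_fp_vsde}. Applying It\^o's formula to $f \in C_b^{1,2}([0,T]\times\RR)$ at $X_t$ and taking expectations (the stochastic integral is a true martingale since $f'$ is bounded and $\beta(t)^{-1}$ is continuous on the compact interval $[0,T]$) shows that $\rho_t := \textnormal{Law}(X_t)$ solves the weak Kolmogorov forward equation \eqref{eq:weak_form_kolmogorov_forward} for $\modgen_t$. By construction, the Gibbs curve $\mu_t$ also solves this equation: the continuity equation underlying the definition \eqref{eq:1d_v} handles the transport part $\partial_t\mu_t + \partial_x(v_t\mu_t) = 0$, while stationarity of $\mu_t$ for the Langevin dynamics at fixed temperature $\beta(t)$ handles the diffusive part $\partial_x(U'\mu_t) - \beta(t)^{-1}\partial_x^2 \mu_t = 0$. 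The integrability $\|v_t\|_{L^1(\mu_t)} \in L^1([0,T])$ supplied by \cref{thm:1d_ac} is precisely the regularity hypothesis needed to invoke \cite[Theorem~9.3.6]{bogachev2022fokker}, yielding uniqueness of the Fokker--Planck solution and hence $\rho_t = \mu_t$ for almost every $t \in [0,T]$.

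The main obstacle I anticipate is making the tail-sign analysis of $v_t$ uniform in $t$. The threshold past which $xv_t(x) \leq 0$ depends on where the antiderivative of $(U - \int U \dif\mu_t)\pi_t$ switches sign, and \cite[Theorem~10.2.2]{stroock1997multidimensional} requires such a threshold to be bounded uniformly on $[0,T]$. Continuity of $t \mapsto \pi_t$ together with the uniform quadratic tail control from \cref{ass:quadratic_tail} should furnish the required uniformity, but carrying this out cleanly is the nontrivial ingredient that is absent in the higher-dimensional arguments of \cref{sec:fss_diffusive}.
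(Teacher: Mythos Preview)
Your proposal is correct and matches the paper's approach exactly: the paper's proof is itself only a two-sentence sketch pointing to \cref{prop:weak_sol_SDE_plus_v,prop:gibbs_curve_sol_fp_vsde} and noting that \cref{ass:1d_gradient_condition} together with the sign of $v_t$ in the tails replaces \cref{ass:v_linear_growth} in the application of \cite[Theorem~10.2.2]{stroock1997multidimensional}. Your anticipated obstacle about uniformity in $t$ is in fact moot, since the threshold $R$ in the sign analysis of $I_t(x)$ carried out in the proof of \cref{thm:1d_ac} is the fixed $R$ from \cref{ass:1d_gradient_condition} and does not depend on $t$.
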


\begin{proof}[Sketch of proof]
    The proof follows analogously to those for \cref{prop:weak_sol_SDE_plus_v} and \cref{prop:gibbs_curve_sol_fp_vsde}. 
    However, in the application of \cite[{Theorem~10.2.2}]{stroock1997multidimensional} we instead use Assumption \ref{ass:1d_gradient_condition} and the sign of $v_t(x)$ for large $|x|$ to conclude that the conditions are satisfied.
\end{proof}

\section{Tractable approximations to controlled annealing processes}\label{sec:num_approx}%
So far we have shown the existence of a velocity field which allows for arbitrarily fast cooling. In practice, computing the velocity field is difficult, as it is equivalent to solving the PDE \eqref{eq:gibbs_continuity_equation} at every time $t$.
Nonetheless, we will now introduce a series of approximations which in the end yields a tractable numerical scheme inspired by the idealized processes. In particular, we approximate the flow of the measures along the Gibbs curve using a finite number of interacting particles.

To this end, we first note that for any $t$, the limit in \cref{prop:v_as_monge_limit} yields that for any $\epsilon > 0$ there exists some $h = h(\epsilon) > 0$ such that
\[
   \left\|h^{-1}(\Ttth - \textrm{Id}) - v_t \right\|_{L^2(\mu_t)} < \epsilon.
\]
In other words, at any $t$, we could approximate $v_t$ arbitrarily well by the finite difference quotient $h^{-1}(\Ttth - \Id)$. Computing the Monge map $\Ttth$ is however still intractable. With access to independent random variables $X^1,...,X^n \sim \mu_t$, $Y^1,...,Y^n \sim \mu_{t+h}$ and denoting by $\mu^n_t, \mu^n_{t+h}$ the corresponding empirical probability measures, one estimator for $\Ttth$ would be the solution to the corresponding assignment problem. 
However, assuming that we are able to sample from $\mu_{t+h}$ means that we would
already have solved the problem of following the Gibbs curve.
Nevertheless, with access only to samples from $\mu_t$, we can make $\mu^n_t$ resemble $\mu^n_{t+h}$ via importance sampling--like re-weighting, and then estimate the transport map by solving the transport problem between these discrete measures. In the next section, we establish that this procedure converges as $n \to \infty$, and in \cref{sec:algs} we utilize this to design our numerical methods.

\subsection{Self-normalized importance sampling estimation of optimal transport maps}

Here, we briefly consider finite approximations to the minimizer $T$ of the Monge problem
\begin{equation}\label{eq:monge_w2}
    \inf_{T\colon T_\#\mu = \nu} \int |x-T(x)|^2 \difempty\mu(\dx),
\end{equation}
when $\mu, \nu \in \Psp_2(\Rdim)$ permit unnormalized Lebesgue densities $p,q$, and $\nu \ll \mu$. Specifically, we are interested in estimators to $T$ from independent random variables $X^1,X^2,...$ distributed according to $\mu$. 
By the law of large numbers, the sequence of empirical distributions $\mu^n = n^{-1}\sum_{i=1}^n \delta_{X^i}$ converges weakly to $\mu$. In fact,
with normalized weights $w_i^n \propto q(X^i) / p(X^i)$, we additionally find that the empirical importance sampling measure $\nu^n = \sum_{i=1}^n w_i^n \delta_{X^i}$ converges weakly to $\nu$, a standard result on importance sampling \cite[Section~3.3]{robert2005monte}.

As the Monge problem \eqref{eq:monge_w2} is not in general solvable for measures supported on point clouds, we cannot a priori ask whether the solutions to the finite Monge problems from $\mu^n$ to $\nu^n$ converge. To remedy this, we instead consider conditional projections $T^n$ of the optimal transport plans $\gamma^n \in \Gamma(\mu^n,\nu^n)$, defined as 
\[
    T^n(x) := \EE_{(X,Y) \sim \gamma^n}[Y \mid X = x] = \int y \difempty\gamma^n(x,\dy).
\]
In general, $T^n_\# \mu^n \ne \nu^n$, i.e., $T^n$ does not define an admissible map. Further, as $T^n$ is not uniquely defined $\mu$ almost everywhere, we cannot hope for strong convergence in $L^2(\mu)$. It does however hold that $T^n \to T$ in a suitable weak sense and a suitable norm sense.

\begin{proposition}\label{prop:importance_transport}
    Let $\mu,\nu \in \Psp_2(\Rdim)$ be two probability measures permitting unnormalized Lebesgue densities $p$ and $q$, such that $p(x)=0 \implies q(x) = 0$. Let $T$ be a Monge map from $\mu$ to $\nu$ solving \eqref{eq:monge_w2}.
    Let $X^1,X^2,...$ be independent random variables such that $X^i \sim \mu(\dx)$ for all $i$ and define the empirical measures
    \[
        \mu^n := \frac{1}{n}\sum_{i=1}^n\delta_{X^i}, \quad 
        \nu^n := \sum_{i=1}^n w^n_i \delta_{X^i} , \quad w_i := \frac{q(X^i)}{p(X^i)}, \quad w_i^n := \frac{w_i}{\sum_{j=1}^n w_j}.
    \]
    Let $\gamma^n$ be an optimal $\wass_2$-transport plan of $\mu^n,\nu^n$,
    \[
        \int_{\Rdim\times\Rdim} |x-y|^2 \difempty\gamma^n(\dx,\dy) = \inf_{\gamma \in \Gamma(\mu^n,\nu^n)} \int_{\Rdim \times \Rdim}|x-y|^2 \difempty\gamma(\dx,\dy),
    \]
    and let $T^n(x) = \int y\difempty\gamma^n(x,\dy)$ be the barycentric projection of $\gamma^n$. Then as $n\to\infty$, almost surely
    \[
        T^n_\#\mu^n \to T_\#\mu =\nu \quad \text{weakly} \quad 
    \]
    and
    \[
        \limsup_n  \|T^n\|_{L^2(\mu_n)} \leq  \|T\|_{L^2(\mu)}.
    \]
\end{proposition}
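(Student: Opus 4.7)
The plan is to upgrade the weak convergence of the empirical marginals to $\wass_2$ convergence, invoke stability of $\wass_2$-optimal transport plans to identify the limit of $\gamma^n$, and then control the barycentric projections via Jensen's inequality combined with the graph structure of the limiting plan.

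\textbf{Marginal convergence and stability of the plans.} By the strong law of large numbers together with $\mu \in \Psp_2$, one has $\wass_2(\mu^n, \mu) \to 0$ almost surely. For the self-normalized measure, writing
\[
    \int f \dif \nu^n = \frac{n^{-1}\sum_i w_i f(X^i)}{n^{-1}\sum_i w_i}
\]
and applying SLLN to numerator and denominator separately yields $\int f \dif \nu^n \to \int f \dif \nu$ a.s.\ for every $f \in L^1(\nu)$; taking $f(y) = |y|^2$ (integrable since $\nu \in \Psp_2$) together with weak convergence gives $\wass_2(\nu^n, \nu) \to 0$ a.s. Standard stability of $\wass_2$-optimal plans then identifies every weak limit of $\gamma^n$ as an optimal plan in $\Gamma(\mu,\nu)$; since $\mu$ has a Lebesgue density, \cref{prop:existence_monge_map} makes this limit unique, so $\gamma^n \to \gamma := (\Id, T)_\# \mu$ weakly, and in fact in $\wass_2(\Rdim \times \Rdim)$ by convergence of second moments.

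\textbf{The two conclusions.} The $L^2$-norm bound is immediate from Jensen's inequality applied to the conditional expectation $T^n(x) = \EE_{\gamma^n}[Y \mid X=x]$: pointwise $|T^n(x)|^2 \leq \EE_{\gamma^n}[|Y|^2 \mid X=x]$, so $\|T^n\|^2_{L^2(\mu^n)} \leq \int |y|^2 \dif\nu^n$ and the right-hand side tends a.s.\ to $\int |y|^2 \dif\nu = \|T\|^2_{L^2(\mu)}$. For the weak convergence $T^n_\# \mu^n \to \nu$, couple $T^n_\#\mu^n$ to $\nu^n$ through $\gamma^n$, sending the atom at $T^n(x)$ to the conditional law $\gamma^n(x,\cdot)$, to obtain $\wass_2^2(T^n_\#\mu^n, \nu^n) \leq \int |y - T^n(x)|^2 \dif\gamma^n$; the pointwise minimality of the conditional expectation among constants gives $\int |y - T^n(x)|^2 \dif\gamma^n \leq \int |y - T(x)|^2 \dif\gamma^n$, and the right-hand side tends to $\int |y - T(x)|^2 \dif\gamma = 0$ since $\gamma$ is supported on the graph of $T$. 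Together with $\wass_2(\nu^n, \nu) \to 0$ this gives $\wass_2(T^n_\# \mu^n, \nu) \to 0$ and hence the desired weak convergence.

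\textbf{Main obstacle.} The delicate point is the limit $\int |y - T(x)|^2 \dif\gamma^n \to 0$: the integrand is unbounded, and the Brenier map $T$ is in general only Borel ($\mu$-a.e.\ continuous, but not everywhere continuous), so weak convergence of $\gamma^n$ alone does not suffice. The plan to resolve this is to exploit the stronger $\wass_2$ convergence $\gamma^n \to \gamma$, which supplies uniform integrability of second moments, together with a Lusin-type approximation of $T$ by continuous functions in $L^2(\mu)$, so that the pointwise approximation transfers under this uniform integrability to the integral convergence needed.
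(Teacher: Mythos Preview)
Your argument is correct; the Jensen bound for $\|T^n\|_{L^2(\mu^n)}$ is identical to the paper's. The routes diverge for the weak convergence $T^n_\#\mu^n \to \nu$. The paper never estimates $\wass_2(T^n_\#\mu^n,\nu^n)$ but instead observes that for $f\in C^\infty_c(\Rdim)$ one has
\[
    \int f(x)\, T^n(x)\,\dif\mu^n(x) \;=\; \int f(x)\, y \,\dif\gamma^n(x,y) \;\longrightarrow\; \int f(x)\, y \,\dif\gamma(x,y),
\]
the integrand involving only the identity in $y$ so that uniform integrability of $|y|$ under $\gamma^n$ (already obtained from $\int|y|^2\dif\nu^n\to\int|y|^2\dif\nu$) suffices to pass to the limit. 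It then invokes \cite[Theorem~5.4.4]{ambrosio2008gradientflows}, a ``weak-to-strong'' black box which, combined with the $\limsup$ norm inequality, delivers the conclusion. This sidesteps your obstacle entirely, since the Borel map $T$ never appears inside an integral against $\gamma^n$. Your coupling argument is more self-contained and in fact yields the stronger conclusion $\wass_2(T^n_\#\mu^n,\nu)\to 0$; note however that the obstacle you flag can be avoided without Lusin. The conditional-expectation minimality gives $\int |y-T^n(x)|^2\,\dif\gamma^n \leq \int |y-g(x)|^2\,\dif\gamma^n$ for \emph{every} measurable $g$, so you may choose $g=T_\epsilon\in C_c(\Rdim,\Rdim)$ with $\|T-T_\epsilon\|_{L^2(\mu)}<\epsilon$ directly, rather than first passing through $g=T$. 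The integrand $|y-T_\epsilon(x)|^2$ is then continuous with at most quadratic growth, so the $\wass_2$-convergence $\gamma^n\to\gamma$ you already established gives $\int |y-T_\epsilon(x)|^2\,\dif\gamma^n \to \int |T(x)-T_\epsilon(x)|^2\,\dif\mu < \epsilon^2$, and you conclude by letting $\epsilon\downarrow 0$.
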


\begin{proof}
    By the strong law of large numbers, both $\mu^n \to \mu$ and $\nu^n \to \nu$ weakly.
    As $\mu, \nu \in \Psp_2(\Rdim)$, then, almost surely, $\mu^n, \nu^n \in \Psp_2(\Rdim)$. Hence, by \cite[Proposition~7.1.3]{ambrosio2008gradientflows}, the sequence of optimal transport plans $(\gamma^n) \subset \Psp(\Rdim \times \Rdim)$ has weak limit points, and all such limit points $\gamma$ satisfy 
    \[
        \gamma \in \argmin_{\gamma' \in \Gamma(\mu,\nu)} \int_{\Rdim\times\Rdim}|x-y|^2\difempty\gamma'(\dx,\dy).
    \]
    As $\mu$ has a Lebesgue density, $\gamma$ is unique. Therefore, the entire sequence $\gamma^n$ in fact converges weakly to $\gamma$. 

    To show that $T^n$ converges to $T$ in the prescribed sense, we seek to apply \cite[Theorem~5.4.4]{ambrosio2008gradientflows}. To this end, we estimate
    \begin{align*}
        \| T^n\|^2_{L^2(\mu^n)} &= 
         \frac{1}{n} \sum_i \left|\int y\difempty\gamma^n(x_i,\dy) \right|^2 \\
        &\leq \frac{1}{n} \sum_i \int |y|^2\difempty\gamma^n(x_i,\dy)\\ 
        &= \int |y|^2\difempty\gamma^n (\dx,\dy) \\
        &= \int |y|^2 \difempty \nu^n(\dy) = \sum_{i=1}^n |x_i|^2 w_i,
    \end{align*}
    using Jensen's inequality, and that $\gamma^n$ is a coupling between $\mu^n$ and $\nu^n$. As $\nu \in \sP_2(\Rdim),$ the strong law of large numbers yields that
    \[
        \limsup_n \left|\sum_{i=1}^n |x_i|^2 w_i - \int |y|^2 \difempty\nu(\dy) \right| =0 \quad \text{almost surely.}
    \]
    As $\int |y|^2 \difempty\nu(\dy) = \int |T(x)|^2 \difempty\mu(\dx) =   \|T\|^2_{L^2(\mu)}$, we additionally find
    \[
        \limsup_n \|T^n\|^2_{L^2(\mu^n)} \leq \|T\|^2_{L^2(\mu)}.
    \]
    With $f\in C^\infty_c(\Rdim)$
    \begin{align*}
        \lim_n \int_\Rdim f(x) T^n(x)\difempty \mu^n(\dx) &= 
         \lim_n \int_\Rdim f(x) \left(\int_\Rdim y \difempty\gamma^n(x, \dy)\right) \difempty\mu^n(\dx)\\
         &= \lim_n \int_{\Rdim\times\Rdim} f(x) y \difempty\gamma^n(\dx,\dy) \\
         &= \int_{\Rdim\times\Rdim} f(x)y \difempty\gamma(\dx,\dy),
    \end{align*}
    since $\gamma^n \to \gamma$ weakly. An application of \cite[Theorem~5.4.4]{ambrosio2008gradientflows} now readily yields that, almost surely,
    \[
        T^n_\#\mu^n \to T_\#\mu = \nu \quad %
    \]
    as desired.
\end{proof}

\subsection{Simulating approximately controlled annealing processes}\label{sec:algs}

\newcommand{\bx}{\mathbf{x}}
\newcommand{\bv}{\mathbf{v}}
\newcommand{\bX}{\mathbf{X}}
\newcommand{\bY}{\mathbf{Y}}
\newcommand{\bV}{\mathbf{V}}
\newcommand{\bZ}{\mathbf{Z}}
\newcommand{\bT}{\mathbf{T}}

We now describe tractable numerical approximation schemes of the processes in \cref{sec:controlled_superposition}, relying on a finite difference approximation in time and a particle approximation in space, with parameters $h,n$ respectively. The final algorithms consist of simulating a population $\bX_t$ of $n$ particles,
\[
    \bX_t = (X^1_t,...,X^n_t) \in \Rdim \times ... \times \Rdim,
\]
and $n$ piecewise constant velocity components
\[
    \bV_t = (V^1_t,...,V^n_t) \in \Rdim \times ... \times \Rdim.
\]
Each particle $X^i_t$ evolves quasi-independently and is driven by independent stochastic dynamics, and the velocity $V^i_t$, which is an approximation of the true velocity field. It is the velocities $\bV_t$ that introduce interaction between the particles.
At predetermined times, we compute $\bV_t$ as follows: given that particle $X^i_t = x_i$ at time $t$ for all $i$, we define the reweighted empirical measure $\hat\mu_{t+h}^n$ as in \cref{prop:importance_transport} by
\begin{equation}\label{eq:reweighting}
    \hat\mu^n_{t+h} := \sum_{i=1}^n w_i\delta_{x_i}, \quad w_i \propto \exp\left(-\obj(x_i)(\beta(t+h)-\beta(t))\right), \quad \sum_{i=1}^nw_i = 1.
\end{equation}
With $C_{ij} := |x_i - x_j|^2$ and identifying that any $\gamma \in \Gamma(\hat \mu_t^n, \hat \mu_{t+h}^n)$ can be expressed as $\gamma= \sum_{ij} G_{ij}\delta_{x_i}(\dx)\delta_{x_j}(\dy)$, the transport problem between the two empirical measures is the finite-dimensional linear program
\begin{alignat*}{3}
    G^* := & \argmin_{G\geq 0}    & \quad &  & \langle C, G    & \rangle_{\textrm{Fr}}   \label{eq:transport_lp}\tag{LP} \\
           & \,\,\,\,\,\,\,\text{s.t.} &       &  & G\mathbf{1} =   & \, \mathbf{1}                             \\
           &                   &       &  & G^T\mathbf{1} = & \, n\mathbf{w},
\end{alignat*}
where $\mathbf{w} = (w_1,...,w_n)^T$, $\langle C, G \rangle_{\textrm{Fr}} := \sum_{ij} C_{ij}G_{ij}$ denotes the Frobenius inner product, and $\mathbf{1}$ denotes a vector of ones.
By conditional projection of the solution $G^*$ to this linear program, we find the transport map estimator
\[
    T(x_i) = \EE_{(X,Y)\sim\gamma^* }[Y \mid X = x_i] = \left(\sum_j G^*_{ij}x_j\right),
\]
and set the components of $\bV_t$ to
\begin{equation}\label{eq:alg_v_from_lp}
    V_t^i = \frac{\left(\sum_jG^*_{ij}x_j\right)-x_i}{h}.
\end{equation}
Barring any other dynamics, this velocity $V^i_t$ would then move $X^i_t$ to $T(X^i_t)$ in a straight line in $h$ time.

\paragraph{Diffusive dynamics} To integrate the diffusive dynamics, we use an Euler-Maruyama discretization with a fixed step length $\Delta t$, and we pick a velocity update interval $k\Delta t$ for some positive integer $k$. The resulting numerical scheme, described in \cref{alg:langevin_transport}, is easily parallelisable over all steps except solving \eqref{eq:transport_lp}.

\begin{algorithm}
    \caption{Controlled Langevin diffusion for simulated annealing.}\label{alg:langevin_transport}
    Given an objective function $\obj$, a cooling schedule $\beta$, initial states $\bX_0 =\left(X^i_0\right)_{i=1}^n$, a time discretization $\Delta t$, a velocity update interval $h = k\Delta t$, and an end time $T$. Set $t = 0$.
    \begin{enumerate}
        \item \emph{Estimate the velocity field}:
        \begin{enumerate}
        \item Compute the weights $\tilde w_i = \exp(-(\beta(t+h)-\beta(t))\obj(X_t^i))$, the normalized weights $w_i = \tilde w_i / \sum_j \tilde w_j$, and the cost matrix $C_{ij} = |X_t^i - X_t^j|^2$.
        \item Solve the discrete optimal transport problem $G^* = \argmin_{G_{ij}\geq 0} \{ \langle G, C\rangle_{\textrm{Fr}}\colon G\mathbf{1} = \mathbf{1}, G^T\mathbf{1} = n\mathbf{w} \}.$
        \item Set
        $V^i_t = h^{-1}\left(\left( \sum_j G^*_{ij}X^j_t\right) - X_t^i\right)$ for  $i = 1,...,n.$
    \end{enumerate}
        \item \emph{Integrate the SDE}: For every particle $i$ and $\ell=0,...,k-1$ sample independently $\zeta^i_\ell \sim \cN(0, I_d)$ and set
        \[
            X_{t + (\ell+1)\Delta t}^i = X^i_{t + \ell\Delta t} + \Delta t \left( V_t^i - \nabla \obj (X_{t + \ell\Delta t}^i)\right) + \sqrt{2 \beta^{-1}(t + \ell\Delta t)\Delta t} \zeta^i_\ell.
        \]
        Set $t = t + k\Delta t$ and repeat from Step 1 if $t < T$.
    \end{enumerate}
\end{algorithm}

\paragraph{Synchronous PDMP} For $n$ particles, we can formalize the algorithm as a single PDMP on $\RR^{3 \times \ddim \times n}$ with scheduled events. For the process at time $t$, we simulate independently a random bounce time $\Delta^i$ for each particle and then advance the ensemble forward in time $\Delta =\min_i(\Delta^i)$ if $t+\Delta$ is before the next scheduled velocity estimation. If the next velocity computation is scheduled before $t+\Delta$, then the particles are instead propagated up to this time and the velocity is recomputed. To this end, let $\bZ_t = (\bX_t,\bY_t,\bV_t)$ and define
\begin{align*}
    \xi(s,t,\bZ_t) &= (\bX_t + s(\bY_t + \bV_t), \bY_t, \bV_t), \\
    \lambda^i_b(t, \bZ_t)  &= \max(0, \beta(t)\langle \nabla\obj(X^i_t),  Y^i_t \rangle), \\
    \cQ^i_b(\bZ_t, \dif z) &= \delta_{(\bX_t, (Y^1_t,...,Y'_t, Y^{i+1}_t,...,Y^n_t), \bV_t)}(\dif z), \quad Y'_t = R(X^i_t)Y^i_t.
\end{align*}
Formally, we find a joint rate $\lambda$ and a mixture kernel $\cQ$
\begin{align*}
    \lambda(t, \bZ_t) &= \lambda_r + \sum_i \lambda^i_b(t, \bZ_t), \\
    \cQ(t, \bZ_t, \dif z) &= \frac{\lambda_r}{\lambda(t, \bZ_t)} \cQ_r(\dif z) + \sum_i \frac{\lambda^i_b(t, \bZ_t)}{\lambda(t,\bZ_t)}\cQ^i_b(\bZ_t, \dif z),
\end{align*}
but in practice we understand this as sampling from the event that arrived first. The refreshment kernel $\cQ_r$ resamples $\bY_t$ for one or more particles. With an update schedule $((t_k, \cQ_k))_k$, $t_k = hk$ and $\cQ_k$ a deterministic kernel supported on the velocity estimate from the barycentric projection of the solution of \eqref{eq:transport_lp}, this scheme is implementable using the recursive construction in  \cref{alg:recursive_pdmp_m}.

\begin{algorithm}
    \caption{Recursive construction of a PDMP with scheduled events.}
    Let $Z_t$ be the state of a PDMP at time $t$, governed by an event rate $\lambda$, a flow $\xi$, a transition kernel $\cQ$, and an event schedule $((t_k, \cQ_k))_k$.
    \begin{enumerate}
        \item Let $t^* = t_{k^*} = \min \{t_k \colon t_k >t\}$ be the arrival of the next scheduled event.
        \item Simulate independently $\Delta$ as the first arrival time of an inhomogeneous Poisson process with rate $s \mapsto \lambda(t+s,\xi(s, t, Z_t))$, that is, a $[0,\infty)$-valued random variable with distribution function
        \[
            F(\tau) = \exp\left( -\int_0^\tau \lambda(t+s,\xi(s, t, Z_t)) \dif s \right), \quad \tau \geq 0.
        \]
        \item Let $t' = \min(t^*, t +\Delta)$ and set for $s \in [0,t'-t)$
        \[
            Z_{t+s} = \xi(s, t, Z_t).
        \]
        \item\begin{enumerate}
            \item \emph{If $t' = t + \Delta$}: Draw
            \[
                Z_{t'} \sim \cQ(\xi(t'-t,t,Z_t), \dif z).
            \]
            \item \emph{Else}, $t' = t^*$: Draw
            \[
                Z_{t'} \sim \cQ_{k^*}(\xi(t'-t,t,Z_t), \dif z).
            \]
        \end{enumerate}
        \item Repeat with $Z_{t'}$.
    \end{enumerate}
    \label{alg:recursive_pdmp_m}
\end{algorithm}

\paragraph{Asynchronous PDMP} In contrast to the discretization of the diffusive dynamics, where it was simple to propagate each particle forward in time in parallel, synchronization of the BPS dynamics comes at a considerable cost. Indeed, as the random dynamics affecting the particles in between velocity updates are completely independent, it is possible to let the particles evolve asynchronously in time until all of them have reached the boundary of the next scheduled velocity update. Similarly to the construction used in the proof of \cref{prop:mbps_works}, we introduce the process
\[
    \mathbf{\cal{Z}}_t = (\mathbf{T}_t, \bX_t, \bY_t, \bV_t) \in (\RR_+ \times \Rdim \times S^{\ddim -1} \times \Rdim)^n.
\]
When the time coordinate $T^i_t$ of a particle $i$ lies in $[\tau_k,\tau_{k+1})$, it follows the deterministic dynamics
\[
    \xi^i(s, t, (T^i_t, X^i_t,Y^i_t,V^i_t)) = (T^i_t + s, X^i_t + s(Y^i_t + V^i_t), Y^i_t, V^i_t)
\]
and upon reaching the boundary $T^i_t = \tau_{k+1}$ the value of $T_t^i$ stops increasing until all particles have reached the boundary. When this happens, the velocity $\bV_t$ is recomputed and the particles evolve forward again on $[\tau_{k+1}, \tau_{k+2})$.

\subsection{Connections to other methods}

We end this section with a few remarks about the connection between the algorithms we suggest and other algorithms in the literature.

\begin{remark}[Consensus-based optimization]\label{rem:cbo}
    It is natural to compare our numerical method to other ensemble-based optimization methods. In consensus-based optimization \cite{carrillo2018analytical, tretyakov2023consensus}, each particle $X^i_t$ follows an SDE with a drift term $b^i(\bX_t)$ which steers the particle to the current \emph{consensus point} $c_\alpha(\bX_t)$ of the ensemble. Here \( c_\alpha(\bX_t) = \int x \hat\rho^n_t(\dx) \), where $\hat\rho^n_t$ is the probability measure defined by \( \hat\rho^n_t \propto \sum_{i=1}^n \exp(-\alpha\obj(X^i_t))\delta_{X^i_t}  \) and $b^i(\bX_t) = \lambda (c_\alpha(\bX_t) - X^i_t)$ for some chosen $\lambda > 0$. Here, the parameter $\alpha$ tunes the concentration towards the current best estimate. In contrast, our particles follow more inhomogeneous dynamics, as they are driven not towards a single global consensus point, but instead towards some individually assigned point such that the drift of the entire ensemble achieves a consensus effect. 
\end{remark}

\begin{remark}[Birth-Death processes and Sequential Monte Carlo Simulated Annealing]\label{rem:smcsa}
While we have focused on solving the Fokker--Planck equation \eqref{eq:fp_question_intro} using velocity fields, it is also possible to instead consider other types of stochastic processes. We restrict now our attention to processes taking values in a compact subset $\cX\subset\Rdim$. Assume that the inverse temperature $\beta$ is non-decreasing, $\beta^\prime \ge 0$, and assume that $U$ is continuous, in which case both $\sup_{x\in\cX}\obj(x)$ and $\inf_{x\in\cX}\obj(x)$ are bounded. Let $\hat U(x) = U(x) - \inf_{x\in\cX}U(x) \ge 0$.
The operator
\[
\mathcal Q_t f(x) = |\beta'(t)\hat U(x)| \int \left(f(y) - f(x)\right)\pi_t(y) \dy 
\]
is the infinitesimal generator of a pure jump process,
\cite[Proposition~17.2]{kallenberg2021foundations}. 
This process jumps with the position-dependent rate $|\beta'(t)\hat U(x)|= \beta'(t)\hat\obj(x)$ to a random location independently sampled from $\mu_t$.
This process also formally solves the Fokker--Planck equation \(\partial_t\mu_t = \cQ_t^*\mu_t\) in the sense that
\begin{align*} 
\int \mathcal Q_t f(x) \pi_t(x) \dx &=
    \int \left(|\beta'(t)\hat U(x)|\int \left(f(y) - f(x)\right)\pi_t(y) \dy \right) \pi_t(x) \dx \\
&= \beta'(t)\left(
        \int f(y) \pi_t(y) \dy \int\hat U(x)\pi_t(x) \dx
        - \int \hat U(x)f(x)  \pi_t(x) \dx\right) \\
&=  - \beta'(t)\,\covt \left(f, \hat U \right)
= -\beta'(t)\,\covt \left(f,  U \right)
= \int 
    f(x)\partial_t \pi_t(x) \dx,
\end{align*}
see \eqref{eq:dpidt}.
A natural way to numerically implement this process is by using a particle swarm approximation to $\mu_t$, removing particles with the position-dependent rate $|\beta'(t)\hat U(x)|$, and recreating them at a new location, drawn independently from the locations of the pool of remaining particles.
This can be viewed as a continuous-time version of Sequential Monte Carlo Simulated Annealing \cite{zhou2013sequential}, a discrete time simulated annealing algorithm where particles are successively resampled over time according to suitably chosen weights. 

\end{remark}

\begin{remark}[Predictor--corrector]
     At the end of \cref{sec:related_work}, we note that our view of simulated annealing is reminiscent of an interior point method. In this context, it is also interesting to note that our proposed algorithms could arguably be seen as predictor--corrector-type methods \cite{mehrotra1992implementation, mizuno1993adaptive}. More precisely, estimating and using the velocity field $v_t$ can be interpreted as a predictor-type step, which estimates how to move the particles to follow along the curve. Analogously, the superposition with stochastic $\mu_t$-invariant and ergodic dynamics can be interpreted as a corrector step, which moves the particles closer to a configuration that represents the current point on the curve.
\end{remark}

\section{Numerical experiments} \label{sec:numerical_experiments}

\def\placeholderplots{0}

We perform a number of numerical experiments to illustrate the behaviour and to examine the practical applicability of the proposed method. Before this, we note that the velocity estimates and the independent stochastic dynamics can run on different timescales. For any potential $\obj$, we fix the time interval to $[0,1]$ and a cooling schedule $\beta$. After choosing a time discretization $\Delta t$ for the Langevin dynamics, we can freely choose any parameter $\lambda > 0$ and instead integrate
\[
    \dif X_t = -\lambda\nabla\obj(X_t)\dt + \lambda\sqrt{2{\beta(t)}^{-1}} \dif B_t,
\]
which, in practice, amounts to finding a stable gradient descent step length $\lambda\Delta t$. In the PDMP version, this corresponds to choosing a scaling $\lambda>0$ for the BPS velocity component $Y_t \in \lambda S^{\ddim -1}$. For the simulations we have performed, we present the normalized time interval $[0,1]$ and give the corresponding choice of $\lambda\Delta t$ or $\lambda$, respectively. For brevity, we abbreviate the PDMP-based piecewise deterministic simulated annealing method as PDSA.
In the sequel, we have implemented the synchronous version of the controlled PDSA, which lends itself well to the qualitative studies we perform.
We provide a reference implementation and code for reproducing the results presented in this section at \url{https://github.com/vincentmolin/controlled_annealing}.

\subsection{A double-well potential on \texorpdfstring{$\RR$}{R}}\label{sec:exp_doublewell}

\begin{figure}
    \centering
    \begin{subfigure}[t]{0.4\textwidth} \vskip 0pt
    \includegraphics{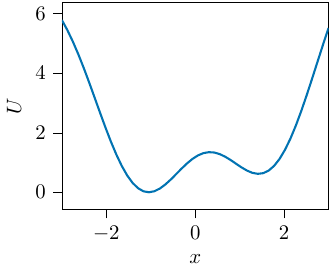}
    \end{subfigure} \hspace{0.05\textwidth} %
    \begin{subfigure}[t]{0.3\textwidth} \vskip 0pt
    \includegraphics{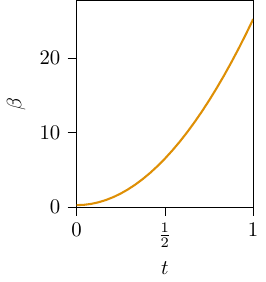}
    \end{subfigure}
    \caption{The double-well potential $\obj$, (left) and the cooling schedule $\beta$ of \cref{sec:exp_doublewell}.}
    \label{fig:doublewell}
\end{figure}

We first consider the double-well potential $\obj\colon\RR\to\RR$ given by
\begin{equation}\label{eq:doublewell}
    \obj(x) = \frac{1}{2}x^2 + \cos\left(2x-\frac{1}{2}\right) + C,
\end{equation}
where $C$ is chosen such that $\min \obj = 0$. This potential has a unique global minimum close to $x=-1$, and a local suboptimal minimum in the vicinity of $x=1.5$. Our Gibbs curve is then defined by choosing the cooling schedule given by $\beta\colon t\mapsto \frac{1}{4} + 25{t^2}$ for $t\in[0,1]$. In particular, we initialize each particle by sampling from $\mu_0$. Here, as $\beta$ increases quickly, the independent stochastic dynamics of particles cause them to stick in the suboptimal well. Simulating the particles instead in groups according to the controlled algorithms allows more flow of probability mass to the global optimum. \cref{fig:doublewell_langevin_hists} illustrates the phenomenon for the diffusion-based method, as \cref{fig:doublewell_pdsa_hists} does for the PDMP version.

\begin{figure}
    \centering
    \begin{subfigure}[b]{0.43\textwidth}
        \includegraphics{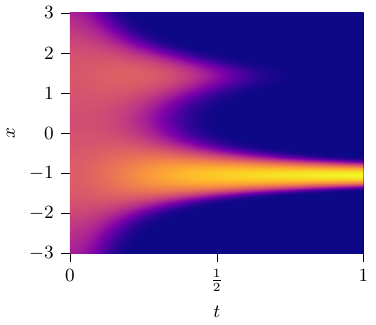}
        \caption{Ground truth.}
    \end{subfigure} %
    \begin{subfigure}[b]{0.43\textwidth}
        \includegraphics{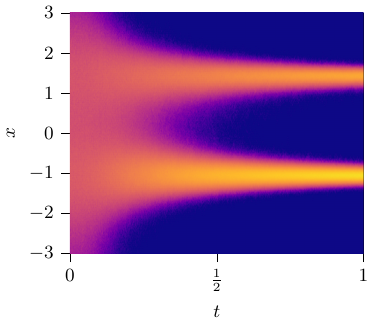}
        \caption{Langevin, independent particles.}
    \end{subfigure} %
    \begin{subfigure}[b]{0.43\textwidth}
        \includegraphics{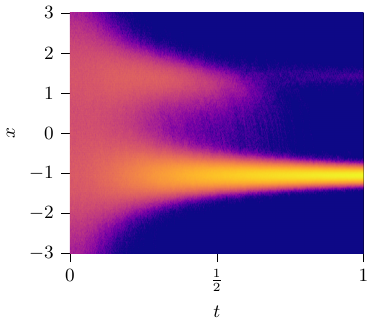}
        \caption{Controlled Langevin, 5 particles.}
    \end{subfigure} %
    \begin{subfigure}[b]{0.43\textwidth}
        \includegraphics{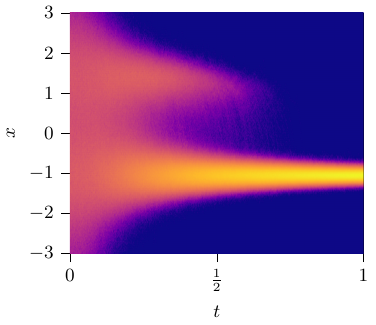}
        \caption{Controlled Langevin, 10 particles.}
    \end{subfigure} %
    \caption{Heat maps of empirical probability densities for the Gibbs measures corresponding to the double-well potential $\obj$ in \eqref{eq:doublewell} and the quadratic cooling schedule $\beta(t) = \frac{1}{4} + 25t^2$, for $0 \leq t  \leq 1$. The Langevin versions are simulated with 1000 steps of length $\lambda\Delta t = 25\cdot 10^{-3}$. For the controlled versions we set $h=2\cdot 10^{-2}$, that is, we compute a new velocity estimate 50 times.
    From the upper right panel, we see that the fact that $\beta$ increases quickly causes particles obeying the independent Langevin dynamics to stick in the suboptimal well. In contrast, from the two panels at the bottom row we see that the interactions in the proposed method allows particles to escape the local minima.
    Histograms are binned averages over 10,000 particle trajectories, coloured with an inverse hyperbolic sine colour scale.}\label{fig:doublewell_langevin_hists}
 \end{figure}
 
\begin{figure}
    \centering
    \begin{subfigure}[b]{0.43\textwidth}
        \includegraphics{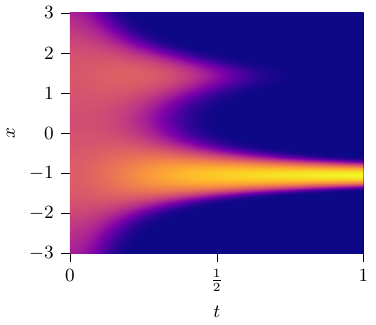}
        \caption{Ground truth.}
    \end{subfigure} %
 \begin{subfigure}[b]{0.43\textwidth}
        \includegraphics{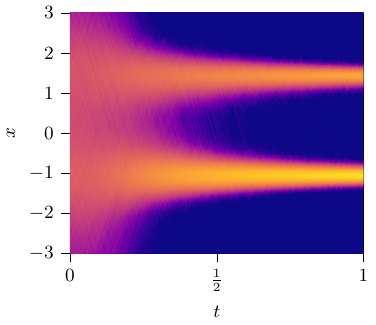}
        \caption{PDSA, independent particles.}
    \end{subfigure} %
    \begin{subfigure}[b]{0.43\textwidth}
        \includegraphics{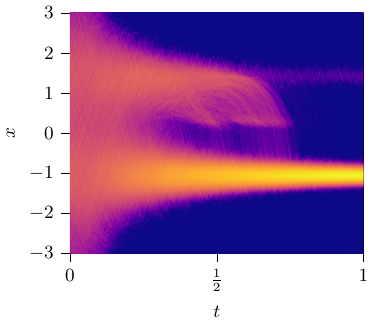}
        \caption{Controlled PDSA, 5 particles.}
    \end{subfigure} %
    \begin{subfigure}[b]{0.43\textwidth}
        \includegraphics{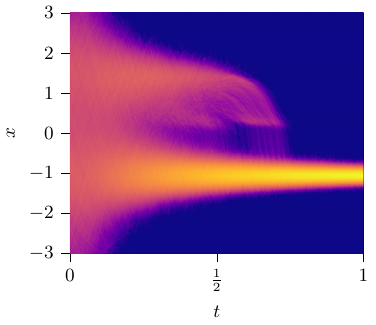}
        \caption{Controlled PDSA, 10 particles.}
    \end{subfigure} %
    \caption{Heat maps of empirical probability densities for the Gibbs measures corresponding to the double-well potential $\obj$ in \eqref{eq:doublewell} and the quadratic cooling schedule $\beta(t) = \frac{1}{4} + 25t^2$, $0\leq t \leq 1$. The PDSA versions are simulated with a speed scale $\lambda = 25$, and for the controlled versions we set $h=2\cdot 10^{-2}$, that is, we compute a velocity estimates 50 times. 
        As in the Langevin case in \cref{fig:doublewell_langevin_hists}, the independent PDSA dynamics of particles stick in the suboptimal well. In contrast, the interactions in the proposed method allows particles to escape the local minima.
        Histograms are binned averages over 10,000 particle trajectories, coloured with an inverse hyperbolic sine scale.}\label{fig:doublewell_pdsa_hists}
\end{figure}

The most dramatic flow of mass happens close to the centre of the time interval $[0,1]$. In \cref{fig:doublewell_w2_v} we quantify this by computing the ground truth metric derivative, which has a fairly sharp peak in the middle. By also estimating the $\wass_2$-distance of the average marginal law of the particles and the ground truth, we see that this is the region where the laws diverge from the Gibbs curve. As can be seen in the figure, increasing the population of interacting particles counteracts this to some extent. 

\begin{figure}
    \centering
    \begin{subfigure}[t]{0.55\textwidth}%
            \includegraphics{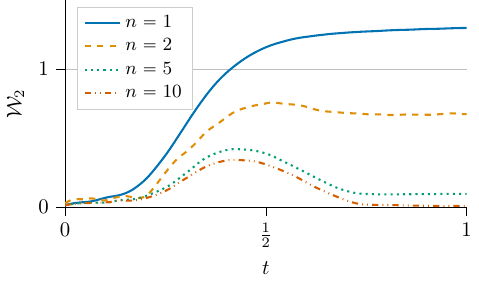}
        \caption{$\wass_2$-distance to the Gibbs curve $\mu_t$.} 
    \end{subfigure} \hspace{0.05\textwidth} %
    \begin{subfigure}[t]{0.35\textwidth}%
        \includegraphics{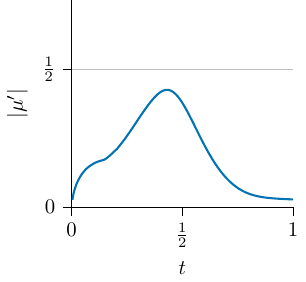}
        \caption{The speed of the Gibbs curve in $\Psp_2$.}
    \end{subfigure}
    \caption{In the left panel, the distance of the time marginals from the Langevin simulations in \cref{fig:doublewell_langevin_hists} is shown. Here $n$ is the population size, and $n=1$ corresponds to independent dynamics, while $n=2,5,10$ denotes controlled versions. 
    The largest deviation for the controlled methods happens around $t=0.4$, at which point the Gibbs curve has the largest rate of change, as quantified by the metric derivative $|\mu'|(t) = \|v_t\|_{L^2(\mu_t)}$, see the right panel.}\label{fig:doublewell_w2_v}
\end{figure}

Finally, we estimate the convergence of the laws to the Gibbs curve as we increase the number of particles and the frequency of the velocity updates,
keeping the other simulation parameters specified in \cref{fig:doublewell_langevin_hists} and \cref{fig:doublewell_pdsa_hists} fixed. The results, shown in \cref{fig:doublewell_convergence}, suggest that the convergence of the controlled PDSA marginals to the Gibbs curve approaches a power law with respect to the number of particles, while we do not observe this scaling in the Langevin setting.

\begin{figure}
    \centering
    \includegraphics{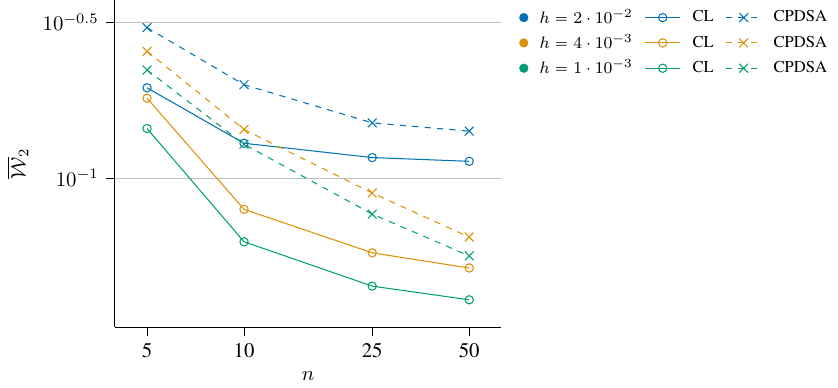}
    \caption{Convergence to the double-well Gibbs curve as the number of particles $n$ increases and the velocity update interval $h$ decreases. Varying the number of particles $n$ (horizontal axis) and the velocity update interval $h$ (colour), we average 1,000 simulations to estimate the time marginals $\rho^{(n,h)}_t$ of the simulations, and compute the time averaged $\wass_2$-distance, $\overline{\wass}_2 = \int_0^1 \wass_2(\rho^{(n,h)}_t,\mu_t)\dt$. Both axes are logarithmically scaled. For smaller $h$, the average distance of the controlled PDSA marginals seems to approach a linear law in this scaling. This suggests that the average distance of the controlled PDSA marginals approaches a power law with respect to the number of particles, while we do not observe this in the Langevin setting.}
    \label{fig:doublewell_convergence}
\end{figure}

\subsection{Optimization problems}

We now evaluate the effect of the velocity estimations by considering two versions of common benchmark problems in optimization on $\Rdim$. Specifically, we set $\ddim = 10$ and consider the Rosenbrock function
\begin{equation}\label{eq:rosenbrock}
    \obj_1(x) := \sum_{i=1}^{\ddim-1} 5 \left( x_{i+1} - x_i^2\right)^2 + (1-x_i)^2,
\end{equation}
and the Rastrigin function given by
\begin{equation}\label{eq:rastrigin}
    \obj_2(x):=|x|^2 + \sum_{i=1}^{\ddim} \left(1 - \cos(2\pi x_i)\right),
\end{equation}
both scaled as in \cite{guilmeau2021simulated}. Moreover, and in contrast to the previous experiment, we now initialize the particles away from the Gibbs curve to model a more realistic scenario. 

For the Rosenbrock function, we use the linear cooling schedule $\beta(t) = 0.1 + 25t$. We draw the initial configurations independently from $\cN(m_i, \frac{1}{20}I)$, centred at $m_1 = (-1.5, 1,1,...,1)$. For the Langevin versions, we use a total of 500 steps of length $\lambda\Delta t=5\cdot \frac 1 {500}$. We run the PDSA versions on a timescale of $\lambda=10$. We simulate controlled versions of both with $5$ particles and 25 velocity updates, and for comparison, we also simulate the independent versions in groups of $5$. In all simulations, we
track the current best objective value $\min_{i=1,...,5} U_1(X^i_t)$ of any particle. In Figure \ref{fig:optim_rosenbrock} we plot the median of this metric across 2,000 simulations, and in \cref{fig:optim_rosenbrock_boxplots} we illustrate the observed variance between these simulations in the form of box plots. For the Langevin versions, we find essentially no gain in performance from the velocity estimates. While there is a stronger effect in the PDSA case, this is mainly due to the poor performance of the independent PDSA algorithm. 

\begin{figure}
    \hspace{0.3in}
    \includegraphics{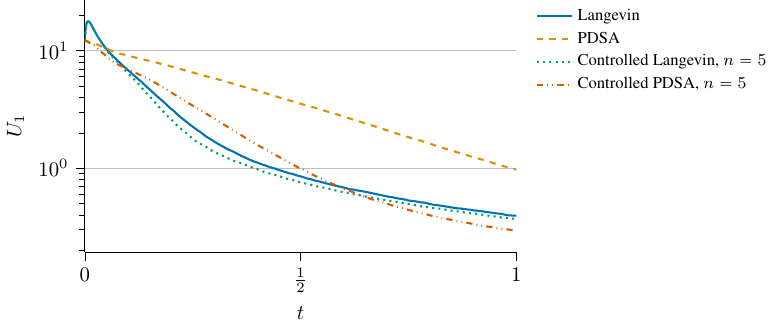}
    \caption{Results for the simulations on the Rosenbrock potential $\obj_1$ in \eqref{eq:rosenbrock}. Out of 2,000 total simulations with $5$ particles, we report the median best of $k=5$ objective value, $\min_{i\leq k} \obj_1(X^i_t)$, simulated independently (Langevin, PDSA) or in groups of $5$.} 
    \label{fig:optim_rosenbrock}
\end{figure}

\begin{figure}
    \hspace{0.3in}
    \includegraphics{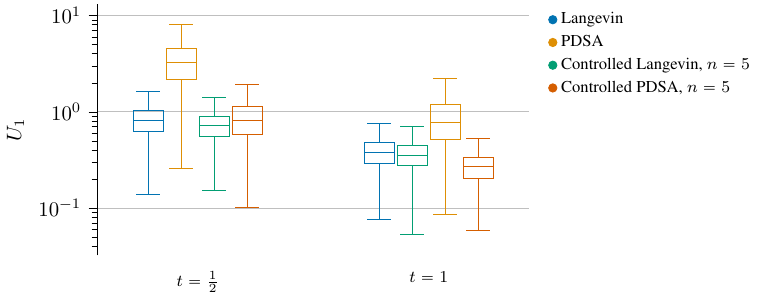}
    \caption{Box plots of the best objective values at $t=\frac{1}{2}$ and $1$, showing the run-to-run variance of the results from the Rosenbrock simulations in \cref{fig:optim_rosenbrock}.} 
    \label{fig:optim_rosenbrock_boxplots}
\end{figure}

For the highly multimodal Rastrigin potential, we use the linear cooling schedule $\beta(t)=0.1 + 5t$. We initialize the particles in the local minima around $m_2=(3,...,3)$, drawing initial configurations from $\cN(m_2, \frac{1}{20}I)$. We run the Langevin versions for 500 steps of length $\lambda \Delta t = \frac{5}{2}\cdot \frac 1 {500}$. We run the PDSA versions at a timescale of $\lambda = 50$, and we estimate the velocity 25 times for the controlled versions. In this experiment we compute similar, best of $k=5$, objective values, shown in \cref{fig:optim_rastrigin} and \cref{fig:optim_rastrigin_boxplots}. The results show that the controlled Langevin version performs best, and in \cref{fig:optim_rastrigin_langevin_vs50} we find that simulating this controlled version with $5$ particles performs similar to taking the best of 50 independent trials.  

\begin{figure}
    \hspace{0.3in}
        \includegraphics{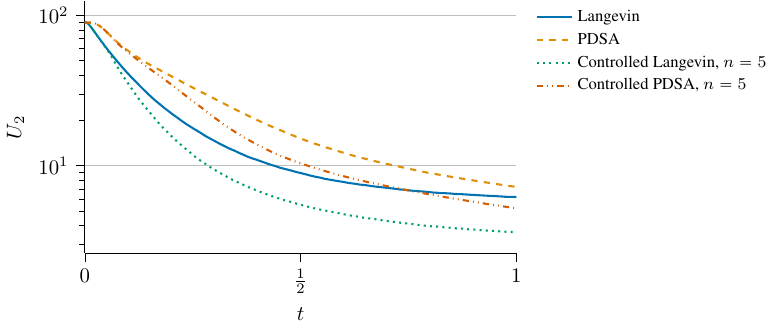}
        \caption{Results for the simulations on the Rastrigin potential $\obj_2$ in \eqref{eq:rastrigin}. Out of 2,000 total simulations with $k$ particles, we report the median best of $k=5$ objective value, $\min_{i\leq k} \obj_2(X^i_t)$, simulated independently or in groups of $5$. Here, the controlled Langevin version performs better than the other methods.} \label{fig:optim_rastrigin}
\end{figure}

\begin{figure}
    \hspace{0.3in}
    \includegraphics{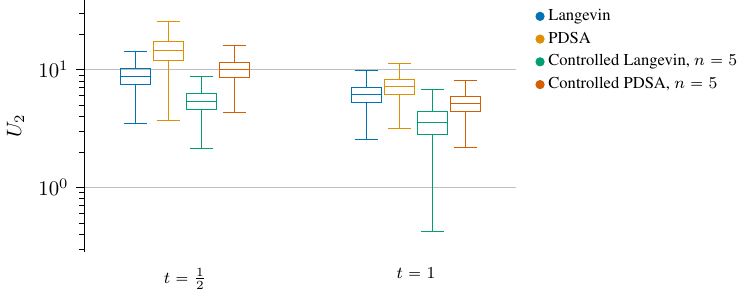}
    \caption{Box plots of the best objective values at $t=\frac{1}{2}$ and $1$, showing the run-to-run variance of the results from the Rastrigin simulations in \cref{fig:optim_rastrigin}. The controlled Langevin version performs best.} 
    \label{fig:optim_rastrigin_boxplots}
\end{figure}

\begin{figure}
    \hspace{0.3in}
        \includegraphics{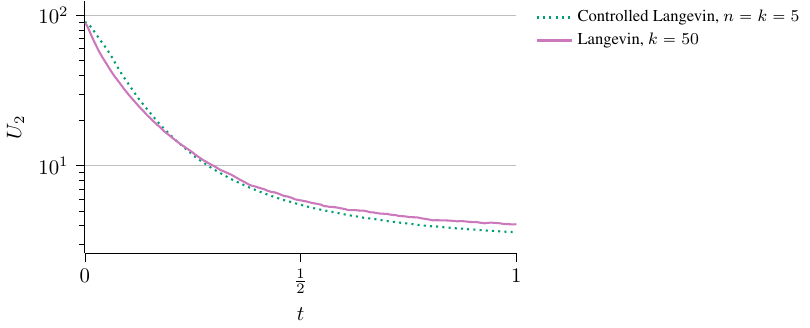}
        \caption{We once again compare the controlled Langevin version of \cref{fig:optim_rastrigin}, simulated in groups of $5$, to independent simulations. We plot the median best of $5$ for the controlled version, to the median best of $50$ independent simulations. In this case, superimposing 25 velocity estimations performs similar to a tenfold increase in independent starts.} \label{fig:optim_rastrigin_langevin_vs50}
\end{figure}

For the Rosenbrock problem, we found that the effect of estimating the flow was negligible compared to gradient descent dynamics.
We expect that this is due to the Rosenbrock problem being a locally difficult problem, in the sense that its difficulty lies in its poor scaling and conditioning. However, the most important effect of the (true) velocity field is the global flow between modes. In contrast to the Rosenbrock problem, the Rastrigin potential is highly multimodal. In this case, we find that superimposing the velocity estimations yields a strong increase in performance for the Langevin version. Specifically, we find that solving a few transport problems along the way gave an improvement comparable to a tenfold increase in restarts, illustrating that there is some increased communication between modes as a result of the introduced interactions.

\clearpage %

\section*{Acknowledgements}
This work was partially supported by the Wallenberg AI, Autonomous Systems and Software Program (WASP) funded by the Knut and Alice Wallenberg Foundation and partially supported by the Chalmers AI Research (CHAIR) project ``Stochastic Continuous-Depth Neural Networks''.

\bibliographystyle{plain}
\bibliography{refs.bib}

\appendix

\section{Deferred proofs}\label{app:proofs}

\subsection{Proofs of Section \ref{sec:v_exists_general_case}}

We verify that the Poincar\'e inequality used in the proof of \cref{thm:gibbs_ac} is bounded on $[0,T]$. We can convince ourselves that this is true via a Theorem in \cite{bakry2008simple}, but we repeat their construction of the Lyapunov function used for completeness.

\poincarelemma*

\begin{proof}%
    By \cite{bakry2008simple} we have that, if there exists a Lyapunov function $W\geq 1$ such that, for all $x$,
    \[
        \cL_tW(x) \leq -\theta W(x) + b\mathbf{1}_{B_R}(x)
    \]
    then
    \[
        \textnormal{Var}_{\mu_t}(f) \leq \frac{1}{\theta}(1+b\kappa_t) \int|\nabla f|^2 \dif \mu_t,
    \]
    where $\kappa_t$ is the Poincar\'e constant of $\mu_t$ restricted to the ball $B_R := \{x\colon |x| < R\}$ and
    $\theta = \gamma(\beta(t)\alpha-\gamma-\frac{n-1}{R}) > 0$ for some $\gamma > 0$.
    For any $R$, the following is a finite bound on the Poincar\'e constant of $\mu_t$ restricted to $B_R$
    \[
        \kappa_t \leq \exp\left({\beta(t)\left(\sup_{x\in B_R} \obj(x)-\inf_{x\in B_R}\obj(x)\right)}\right)R/\pi.
    \]
    We couple the simple Lyapunov function $W(x) = |x|^2 + 1$ with \cref{ass:quadratic_gradient}.
    Then
    \begin{align*}
        \cL_t W(x) & = -2\langle\nabla\obj(x),x\rangle + 2n                                         \\
                   & = \left(-2\langle\nabla\obj(x),x\rangle + 2n\right)I_{|x| > R}                 \\
                   & \phantom{=} + \left(-2\langle\nabla\obj(x),x\rangle + 2n\right)I_{|x| \leq R}  \\
                   & \leq \left(-2\alpha|x|^2 + 2n\right)I_{|x| > R}                                \\
                   & \phantom{=} +  \left(-2\langle\nabla\obj(x),x\rangle + 2n\right)I_{|x| \leq R} \\
                   & = 2\left(-\alpha + \frac{\alpha + n}{|x|^2 + 1}\right)W(x)I_{|x| > R}          \\
                   & \phantom{=} + \left(-2\langle\nabla\obj(x),x\rangle + 2n\right)I_{|x| \leq R}  \\
                   & \leq 2\left(-\alpha + \frac{\alpha + n}{R^2 + 1}\right)W(x)I_{|x| > R}         \\
                   & \phantom{=} + \left(-2\langle\nabla\obj(x),x\rangle + 2n\right)I_{|x| \leq R}.
    \end{align*}
    Possibly increasing $\tilde{R} := \max(R, \sqrt{\frac n \alpha} + 1)$ and setting $\theta = 2\left(\alpha - \frac{n + \alpha}{\tilde{R}^2 + 1}\right) > 0$ we find
    \begin{align*}
        \cL_t W & \leq -\theta W(x)I_{|x| > R} + \left(-2\langle\nabla\obj(x),x\rangle + 2n\right)I_{|x| \leq R}                       \\
                & = -\theta W(x)(1-I_{|x| \leq R}) + \left(-2\langle\nabla\obj(x),x\rangle + 2n\right)I_{|x| \leq R}                   \\
                & = -\theta W(x) + \left(\theta W(x) -2\langle\nabla\obj(x),x\rangle + \frac{2n}{\beta(t)} \right)I_{|x|\leq \tilde R}
    \end{align*}
    Then
    \[
        b_t = \max_{|x|\leq R} \theta + \theta|x|^2 -2\langle\nabla\obj(x),x\rangle + \frac{2n}{\beta(t)} < \infty.
    \]
    Since $b_0 > b_t$ for all $t$ we can pick $b := b_0$.
    With $K = \sup_{x\in B_R} \obj(x) - \inf_{x\in B_R} \obj(x)$ we then have
    \[
        \textnormal{Var}_{\mu_t}(f) \leq \frac{1}{\theta}(1+be^{\beta(t)K}\tilde{R}/\pi) \int |\nabla f |^2 \dif \mu_t
    \]
    for all $t$.
\end{proof}

Additionally, $\obj$ permits moments of any order.

\begin{lemma}[Bounded variance of $\obj$]\label{lem:u_bounded_variance}
Assume that $\obj$ satisfies \cref{ass:inf_obj_exists,ass:quadratic_tail,ass:quadratic_gradient}. Then, for all $\beta > 0$, $\obj$ permits $\mu_\beta$-moments of any order $n \geq 0$ in the sense that
\[
    \int_\Rdim (\obj(x))^n\pi_\beta(x) \dx < \infty.
\]
Specifically, it has bounded variance
\[
    \var_{\mu_\beta}(U) =\int_\Rdim \left(\obj(x)-\int \obj \dif \mu_\beta\right)^2\pi_\beta(x) \dx < \infty.
\]
\end{lemma}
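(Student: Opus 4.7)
The plan is to show the stronger statement that $\int U^n \dif \mu_\beta < \infty$ for every integer $n \ge 0$; bounded variance then follows from $\var_{\mu_\beta}(U) \le \int U^2 \dif\mu_\beta$. Since $Z_\beta = \int e^{-\beta U(x)}\dx$ is finite and strictly positive (by \cref{ass:quadratic_tail} applied exactly as in the proof of \cref{lem:gibbs_in_Pp}), it suffices to prove that
\[
    I_n := \int_{\Rdim} U(x)^n e^{-\beta U(x)} \dx < \infty.
\]

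The main idea is to absorb the polynomial factor $U^n$ into half of the exponential. Because $U \ge 0$ by \cref{ass:inf_obj_exists}, the scalar function $u \mapsto u^n e^{-\beta u /2}$ on $[0,\infty)$ attains its maximum at $u = 2n/\beta$, giving a finite constant
\[
    M_n := \sup_{u \ge 0} u^n e^{-\beta u/2} = \left(\frac{2n}{\beta}\right)^n e^{-n}.
\]
Hence pointwise $U(x)^n e^{-\beta U(x)} \le M_n\, e^{-\beta U(x)/2}$, and the problem reduces to showing $\int e^{-\beta U(x)/2}\dx < \infty$.

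To bound this latter integral, I split $\Rdim = B_R \cup B_R^c$ with $R$ as in \cref{ass:quadratic_tail}. On the ball $B_R$ we simply use $e^{-\beta U/2} \le 1$, so the contribution is at most the Lebesgue volume of $B_R$. On $B_R^c$ the quadratic tail bound gives $e^{-\beta U(x)/2} \le e^{-\beta \alpha |x|^2/2}$, which is integrable as a Gaussian. Combining, $I_n \le M_n\bigl(|B_R| + \int_{\Rdim} e^{-\beta \alpha |x|^2/2}\dx\bigr) < \infty$, and therefore $\int U^n \dif\mu_\beta = I_n / Z_\beta < \infty$. The bounded variance assertion follows at once from
\[
    \var_{\mu_\beta}(U) = \int U^2 \dif\mu_\beta - \Bigl(\int U \dif\mu_\beta\Bigr)^2 \le \int U^2 \dif\mu_\beta < \infty,
\]
applying the above with $n=2$.

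There is no serious obstacle here; the only thing to verify carefully is that the non-negativity of $U$ (ensured by the normalisation $\min U = 0$ in \cref{ass:inf_obj_exists}) is precisely what lets us dominate $u^n$ by a constant multiple of $e^{\beta u/2}$. Note that \cref{ass:quadratic_gradient} is not needed for this lemma.
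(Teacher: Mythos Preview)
Your argument is correct. The paper takes a slightly different route: it observes that the moment-generating function
\[
    M_{\obj}(s) = \EE_{X\sim\mu_\beta}\bigl[e^{s\obj(X)}\bigr] = Z_\beta^{-1}\int_{\Rdim} e^{(s-\beta)\obj(x)}\dx
\]
is finite for all $s$ in a neighbourhood $(-\delta,\delta)$ of the origin (since $s-\beta < 0$ there, and the resulting integral is handled exactly as in \cref{lem:gibbs_in_Pp}), and then invokes the standard fact that finiteness of the MGF near zero forces all moments to exist. Your proof unpacks this implication by hand: the pointwise estimate $u^n e^{-\beta u} \le M_n e^{-\beta u/2}$ is precisely the mechanism by which an exponential moment dominates polynomial ones, and your integrability of $e^{-\beta\obj/2}$ is the paper's $M_{\obj}(\beta/2) < \infty$. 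So the two arguments rest on the same ingredient---integrability of $e^{-c\obj}$ for any $c>0$ via the quadratic tail---but the paper packages it via the MGF, while you give the explicit elementary bound with constants. Your remark that \cref{ass:quadratic_gradient} is unused is also correct; the paper's proof does not use it either.
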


\begin{proof}
    The moment-generating function of the random variable $\obj = \obj(X), X\sim\mu_\beta$ is given by
    \begin{align*}
        M_\obj(s) &= \EE_\obj\left[e^{s\obj}\right] = \EE_X\left[e^{s\obj(X)}\right] = \int_\Rdim \frac{e^{(s-\beta)\obj(x)}}{Z_\beta} \dx.
    \end{align*}
    Since $\beta > 0$ there exists $\delta > 0$ such that $(s-\beta) < 0$ for $|s|<\delta$. By the proof of \cref{lem:gibbs_in_Pp}, $M_\obj(s)$ is then well-defined for all $s \in (-\delta,\delta)$ which guarantees the existence of all moments.
\end{proof}

\subsection{Proofs of Section \ref{sec:one_dim_case}}\label{app:proofs_1d}

Here, we establish that the Gibbs curve is absolutely continuous on $\RR$. We first recall the definition of $v_t$ in \eqref{eq:1d_v},
\[
    v_t(x) = - \frac{1}{\pi_t(x)} \int_{-\infty}^x \partial_t \pi_t(y) \dy = \frac{\beta'(t)}{\pi_t(x)} \int_{-\infty}^x \left( \obj (y) - \int \obj \dif\mu_t \right) \pi_t(y) \dy.
\]

\thmonedimac*

The following elementary estimate controls the tails of $v_t$.

\begin{lemma}\label{lem:1d_v_tail_control}
    Under \cref{ass:1d_gradient_condition} the following estimates hold
    \begin{align}
        x < -R &\implies 
            \int_{-\infty}^x \obj(z)\pi_t(z) \dif z \leq \frac{1}{\alpha}\pi_t(x)\obj(x), \\
        x > R &\implies
            \int_{x}^\infty \obj(z)\pi_t(z) \dif z \leq \frac{1}{\alpha}\pi_t(x)\obj(x).
    \end{align}
\end{lemma}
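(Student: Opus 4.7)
The plan is to exploit the elementary identity
\[
\frac{d}{dz}\bigl[\pi_t(z)\obj(z)\bigr] = \pi_t(z)\obj'(z)\bigl(1 - \beta(t)\obj(z)\bigr),
\]
which follows immediately from $\pi_t'(z) = -\beta(t)\obj'(z)\pi_t(z)$, together with \cref{ass:1d_gradient_condition}. Integrating this identity from $x$ to $+\infty$ (respectively from $-\infty$ to $x$) and discarding the boundary term at infinity, which vanishes because $\pi_t$ has Gaussian decay by \cref{ass:quadratic_tail} while $\obj$ grows at most polynomially, converts the desired tail integral bound into a pointwise inequality on the integrand, which is exactly what \cref{ass:1d_gradient_condition} provides.

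Concretely, for $x > R$ the integration yields
\[
\pi_t(x)\obj(x) = \int_x^\infty \pi_t(z)\obj'(z)\bigl(\beta(t)\obj(z)-1\bigr)\,\dif z.
\]
Since $\obj(z) \geq \alpha z^2 > 0$ for $z > R$ by \cref{ass:quadratic_tail}, I would factor the integrand as $\pi_t(z)\obj(z)\cdot\obj'(z)\bigl(\beta(t) - 1/\obj(z)\bigr)$. Now \cref{ass:1d_gradient_condition} at $z > R$ reads $\obj'(z)\bigl(\beta_0 - 1/\obj(z)\bigr) \geq \alpha$; this forces both $\obj'(z) > 0$ and $\beta_0 > 1/\obj(z)$, so since $\beta(t) \geq \beta_0$ along the curve, we obtain $\obj'(z)\bigl(\beta(t) - 1/\obj(z)\bigr) \geq \alpha$. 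Substituting this lower bound gives $\pi_t(x)\obj(x) \geq \alpha\int_x^\infty \pi_t(z)\obj(z)\,\dif z$, which is the first claim.

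The case $x < -R$ is symmetric: integration yields $\pi_t(x)\obj(x) = -\int_{-\infty}^x \pi_t(z)\obj(z)\cdot\obj'(z)\bigl(\beta(t) - 1/\obj(z)\bigr)\,\dif z$, and for $z < -R$ the gradient assumption now reads $\obj'(z)\bigl(\beta_0 - 1/\obj(z)\bigr) \leq -\alpha$, forcing $\obj'(z) < 0$ together with $\beta_0 - 1/\obj(z) > 0$; the same monotonicity in $\beta$ then yields $\obj'(z)\bigl(\beta(t) - 1/\obj(z)\bigr) \leq -\alpha$, and after accounting for the minus sign in front of the integral the estimate follows. The only real obstacle, and it is a minor one, is the sign bookkeeping in each tail: carefully verifying that the gradient assumption together with positivity of $\obj$ in the tails pins down the sign of $\obj'$, and that replacing $\beta_0$ by the larger $\beta(t)$ preserves the direction of the inequality.
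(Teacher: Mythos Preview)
Your proof is correct and follows essentially the same route as the paper: both arguments hinge on the identity $\frac{d}{dz}\bigl[\obj(z)e^{-\beta\obj(z)}\bigr]=\bigl(-\beta\obj'+\obj'/\obj\bigr)\obj e^{-\beta\obj}$ (equivalently your version with $\pi_t$), integrate it over the relevant tail, and invoke \cref{ass:1d_gradient_condition} to bound the factor multiplying $\obj\pi_t$ below by $\alpha$. You are in fact slightly more explicit than the paper about the passage from $\beta_0$ to $\beta(t)$; two tiny cosmetic points are that the boundary term vanishes simply because $\obj(z)\to\infty$ forces $\obj e^{-\beta\obj}\to 0$ (no polynomial growth of $\obj$ is needed or assumed), and the claim that the product inequality forces both factors positive uses, in addition, that $\beta_0-1/\obj(z)>0$ for $|z|>R$ via \cref{ass:quadratic_tail}.
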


\begin{proof}
    Without loss of generality, assume that \cref{ass:1d_gradient_condition} holds with $\alpha = 1$. Then, since 
    \[
        \frac{d}{\dx} (\obj e^{-\beta\obj}) = \left( -\beta \obj' + \frac{\obj'}{\obj} \right)Ue^{-\beta U},
    \]
    we find that, for $z < -R$, $\left( -\beta \obj' + \frac{\obj'}{\obj} \right) \geq 1$, so
    \begin{align*}
        \int_{x}^\infty \obj(z)e^{-\beta \obj(z)} \dif z &\leq \int_{-\infty}^x \left(-\beta \obj'(z) + \frac{\obj'(z)}{\obj(z)}\right)\obj(z)e^{-\beta \obj(z)} \dif z \\
        &= \obj(x)e^{-\beta \obj(x)}.
    \end{align*}
    Similarly, for $z > R$ we have that $-\left( -\beta \obj' + \frac{\obj'}{\obj} \right) \geq 1$ and hence
    \begin{align*}
        \int_{x}^\infty \obj(z)e^{-\beta \obj(z)} \dif z &\leq -\int_{x}^\infty \left(-\beta \obj'(z) + \frac{\obj'(z)}{\obj(z)}\right)\obj(z)e^{-\beta \obj(z)} \dif z \\
        &= \obj(x)e^{-\beta \obj(x)},
    \end{align*}
    and multiplying each side with $Z^{-1}_t$ finishes the argument.
\end{proof}

Equipped with the preceding lemma, we are now ready to prove the main result of \cref{sec:one_dim_case}.

\begin{proof}[Proof of \cref{thm:1d_ac}]
Writing $\bar U_t := \int \obj \dif \mu_t$ and $I_t(x) := \int_{-\infty}^x \left( \obj(z) - \bar\obj_t \right)\pi_t(z) \dif z$ so that $v_t(x) = \pi_t(x)^{-1}I_t(x)$. We note that \( I(x) \to_{|x|\to\infty} 0 \) and for sufficiently large $|x|$, $\obj(x)-\bar\obj_t > 0$. Hence
\begin{equation*}
    I'(x) = (\obj(x) - \bar U_t )\pi_t(x) > 0
\end{equation*}
so $I(x)$ is strictly increasing towards $0$ when $x\to\infty$ and decreasing when $x\to-\infty$.

For $x > R$ we find then
\begin{align*}
    |I(x)| = -I(x) &= \int_{-\infty}^x\bar\obj_t\pi_t(z)\dif z-\int_{-\infty}^x\obj(z)\pi_t(z)\dif z \\
    &= \bar\obj_t\int_{-\infty}^x\pi_t(z)\dif z - \left(\bar\obj_t - \int_x^\infty \obj(z) \pi_t(z)\dif z\right) \\
    &= \bar\obj_t\left(\int_{-\infty}^x \pi_t(z)\dif z - 1\right) + \int_x^\infty \obj(z)\pi_t(z)\dif z \\
    &= -\bar\obj_t\int_{x}^\infty \pi_t(z)\dif z + \int_x^\infty \obj(z)\pi_t(z)\dif z \\
    &\leq \int_x^\infty \obj(z) \pi_t(z)\dif z\leq \obj(x)\pi_t(x),
\end{align*}
where we applied Lemma \ref{lem:1d_v_tail_control} in the last step. Similarly, for $x < -R$ we find that
\begin{align*}
    |I(x)| = I(x) &= \int_{-\infty}^x (\obj(z)-\bar\obj_t)\pi_t(z)\dif z \\
    &\leq \int_{-\infty}^x \obj(z)\pi_t(z)\dif z \leq \obj(x)\pi_t(x).
\end{align*}
Then $|x| > R \implies |v_t(x)| \leq |\beta'(t)|U(x)$ and
\begin{align*}
    \|v_t\|^p_{L^p(\mu_t,\RR)} &= \int_\RR |v_t(x)|^p \pi_t(x) \dx \\
    &= |\beta'(t)|^p \int_{-R}^R (\pi_t(x))^{-p}|I(x)|^p \pi_t(x) \dx + \int_{|x|>R} |v_t(x)|^p\pi_t(x) \dx \\
    &\leq |\beta'(t)|^p \underbrace{\left( \int_{-R}^R (\pi_t(x))^{-p}|I(x)|^p \pi_t(x) \dx + \int_{|x|>R}(\obj(x))^p\pi_t(x) \dx\right)}_{:=M_t} \\
    &= |\beta'(t)|^pM_t < \infty.
\end{align*}

As $t\mapsto M_t$ is continuous, $\sup_{0<t<T} M_t < \infty$ and the curve is absolutely continuous.
\end{proof}

\end{document}